\newtheorem{theorem}{Theorem}
\newtheorem{conjecture}{Conjecture}
\newtheorem{corollary}[theorem]{Corollary}
\newtheorem*{theorem*}{Theorem}
\newtheorem{lemma}[theorem]{Lemma}
\renewcommand{\qedsymbol}{$\blacksquare$}
\theoremstyle{plain}
\theoremstyle{plain}
\theoremstyle{plain}
\begin{document}




\title{Haros graphs: an exotic representation of real numbers}\
\author{Jorge Calero-Sanz$^{1,2}$}
\email{jorge.calero@urjc.es}
\author{Bartolo Luque$^{1}$}
\email{bartolome.luque@upm.es}
\author{Lucas Lacasa$^{3}$ }
\email{lucas@ifisc.uib-csic.es}
\affiliation{$^1$Departamento de Matem\'atica Aplicada a la Ingenier\'ia Aeroespacial, ETSIAE, Universidad Polit\'ecnica de Madrid, Madrid, Spain;\\$^2$Signal and Communications Theory and Telematic Systems and Computing, Rey Juan Carlos University, Madrid, Spain \\$^3$Institute for Cross-Disciplinary Physics and Complex Systems IFISC (CSIC-UIB), Palma de Mallorca, Spain.}

\date{\today}

\begin{abstract}
{This paper introduces Haros graphs, a construction which provides a graph-theoretical representation of real numbers in the unit interval reached via paths in the Farey binary tree. We show how the topological structure of Haros graphs yields a natural classification of the reals numbers into a hierarchy of families. To unveil such classification, we introduce an entropic functional on these graphs and show that it can be expressed, thanks to its fractal nature, in terms of a generalised de Rham curve. We show that this entropy reaches a global maximum at the reciprocal of the Golden number and otherwise displays a rich hierarchy of local maxima and minima that relate to specific families of irrationals (noble numbers) and rationals, overall providing an exotic classification and representation of the reals numbers according to entropic principles. We close the paper with a number of conjectures and outline a research programme on Haros graphs.}
\end{abstract}
\keywords{Number theory, graph entropy, fractals, complex systems, representation of real numbers, Golden number} 
\maketitle

\section{Introduction}

The structure of real numbers has been a traditional topic of study in mathematics \cite{Hardy,Knuth, Angell}, tackled from several angles, both from a computational perspective --e.g. the quest of exact arithmetic for the approximation of rationals and reals \cite{Niqui, Vuillemin} -- and from a theoretical one, e.g. the characterisation of different families of irrationals such as transcendental or normal numbers \cite{Nivenetal, Hardy}. Among other techniques \cite{Angell}, representing numbers in terms of a continued fraction expansion \cite{AdamczewskiII, Khinchin} has been a very fruitful avenue. A classical approach is to consider the Farey binary tree, where rationals acquire a natural ordering \cite{Bonnano}. Some works \cite{Singerman, Bonnano} present the relationship between Farey sequences and continued fractions. Moreover, other objects can be related such as the modular group \cite{Vepstas}, and the dynamic and thermodynamic perspectives are highlighted \cite{Isola}.\\

\noindent In parallel and also recently, a new technique for representing sequences of numbers in terms of graphs has been proposed \cite{FromTime, HVG}: Visibility Graphs--either Natural Visibility Graphs (VGs) or Horizontal Visibility Graphs (HVGs)-- are combinatorial representations of sequences of numbers, and a number of works \cite{Irreversibility, Detectingperiodicity, Barabasi}  have made it clear that the structure of such graphs inherit several properties of the associated sequences, thereby making them a useful technique for time series analysis and feature-based classification of complex signals. In addition to their applicability for signal processing, a range of theoretic works have studied how the trajectories of dynamical systems are mapped in graph space. In this sense, different routes to low-dimensional chaos have been explored, including the Feigenbaum scenario \cite{FeigenbaumGraphs}, the Pomeau-Manneville scenario \cite{intermittency}, and the quasiperiodic route \cite{QuasiperiodicGraphs}, and a number of dynamical properties such as Lyapunov exponents or renormalization-group features have been shown to be inherited in the associated graphs. Interestingly, the Horizontal Visibility Graph extracted from the circle map certifies that the mode locking frequencies --that relate to specific rationals-- have a very concrete graph-theoretical footprint, and similarly some selected irrationals --i.e. the Golden number, and others-- acquire a natural graph-theoretic representation. \\

\noindent Motivated by these facts, in this work, we build on the dynamical association between rationals and HVGs and proceed to construct a graph-theoretical representation of rational and irrational numbers in the Farey binary tree. Our contention is that the topological structure of such graphs inherits combinatorially important properties of numbers. We will show that such an exotic graph-theoretical representation allows for a classification of rationals and irrationals numbers, as well as for finer classifications inside the set of irrationals, which unveils an extremely rich and self-similar structure. \\

\noindent The rest of the paper goes as follows: In Section II we present the necessary background on Farey sequences, the Farey binary tree, and continued fractions. In Section III, we introduce Haros graphs, which are graph-theoretical representations of rationals and irrationals numbers. Note at this point that Haros graphs are different from the so-called Farey graphs \cite{Zhang, Singerman}, hence our labelling \footnote{Interestintly, the classical concept of Farey graph \cite{Zhang} is indeed similar to the concept of Feigenbaum graph, HVGs extracted from the Feigenbaum scenario in the period-doubling bifurcation cascade \cite{FeigenbaumGraphs}}. In this section, we also explore both analytically and numerically the degree distribution of these graphs in relation to the number they represent, and we also study properties of the degree sequence in relation to Khinchin's constant. In Section IV we proceed to study the entropy of the degree distribution, which we find to have a fractal shape. We show that such entropy has a closed-form shape in terms of a generalised de Rham curve: a self-affine function, continuous but not differentiable at any point. By further exploring the extrema of such curve, we find that it has an infinite number of local minima, which can be classified into families of rational numbers, and in turn, the infinite number of local maxima are related to irrationals that can be naturally parametrised into families of noble numbers. In Section V, we conclude, state some conjectures, and outline several open questions that can form the basis for a research programme on Haros graphs.

\section{Preliminaries} 

\subsection{Farey sequences}
In 1816, the geologist John Farey \cite{Farey}
proposed to order from the smallest to the largest the irreducible fractions with a denominator less than or equal to $n$ belonging to the interval $[0,1]$, hence defining the so-called {\it Farey sequence} of order $n$, ${\cal F}_n$, as the ordered set of all irreducible fractions $0<p/q<1, \ p,q\in \mathbb{N}^+$ whose denominators do not exceed $n$. The first three sequences are:\\
$$\mathcal{F}_{1}=\left\lbrace\frac{0}{1},  \frac{1}{1}\right\rbrace; \ \mathcal{F}_{2}=\left\lbrace\frac{0}{1},\frac{1}{2}, \frac{1}{1}\right\rbrace; \ \mathcal{F}_{3}=\left\lbrace\frac{0}{1}, \frac{1}{3},  \frac{1}{2}, \frac{2}{3},\frac{1}{1}\right\rbrace; \  \mathcal{F}_{4}=\left\lbrace\frac{0}{1}, \frac{1}{4}, \frac{1}{3},  \frac{1}{2}, \frac{2}{3},\frac{3}{4},\frac{1}{1}\right\rbrace$$\\
and in general:
 $\mathcal{F}_{n} = \left\lbrace p/q \in [0,1]:\ 0\leq p \leq q \leq n, \ (p,q) = 1 \right\rbrace$.\\
 \\
\noindent The mediant $\oplus$ of two rational numbers $p/q$ and $p'/q'$ is defined as: $$\frac{p}{q}\oplus\frac{p'}{q'}:=\frac{p+p'}{q+q'}$$ For two consecutive rationals in a Farey sequences ${\cal F}_n$, the mediant of these terms is indeed another irreducible fraction: $$\frac{p}{q} < \frac{p+p'}{q+q'} < \frac{p'}{q'}$$
that does not appear in ${\cal F}_n$ but, provided that $q+q'\leq n+1$, appears in ${\cal F}_{n+1}$  \cite{Knuth, Hardy}.
Thus, one can iteratively construct Farey sequences, using ${\cal F}_n$ and the mediant operator to construct ${\cal F}_{n+1}$.  
For instance, using the mediant sum over $\mathcal{F}_{1}$ we get: $$\frac{0}{1}\oplus\frac{1}{1}= {\frac{1}{2}},$$ 
which is the middle term in ${\cal F}_{2}$. Similarly: 
$${\cal F}_{3}=\left\lbrace \mathbf{\frac{0}{1}},\frac{0}{1}\oplus\frac{1}{2}= \mathbf{\frac{1}{3}}, \mathbf{ \frac{1}{2} }, \frac{1}{2}\oplus\frac{1}{1}= \mathbf{\frac{2}{3}},  \mathbf{\frac{1}{1}} \right\rbrace.$$ Observe, however, that some of the terms resulting from the mediant sum over $\mathcal{F}_3$ ($\frac{1}{3}\oplus\frac{1}{2}=\frac{2}{5}$ and $\frac{1}{2}\oplus\frac{2}{3}=\frac{3}{5}$) are not in $\mathcal{F}_4$ as the denominator exceeds $n=4$, in this sense $\mathcal{F}_n$ is a subset of the set formed by sequentially applying the mediant sum to adjacent pairs of terms in $\mathcal{F}_{n-1}$.\\

\noindent Each Farey sequence $\mathcal{F}_n$ induces a partition of $[0,1]$ into $n$ subintervals (generally of different sizes), which we will call Farey subintervals of order $n$. For example, $\mathcal{F}_3$ induces the partition: $$[0,1] = \left[0, \frac{1}{3}\right) \bigcup \left[\frac{1}{3},  \frac{1}{2}\right) \bigcup \left[\frac{1}{2}, \frac{2}{3}\right) \bigcup \left[\frac{2}{3}, 1\right].$$ 
As $n$ grows, it is easy to see that ${\cal F}_n$ provides finer approximations for the ordering of rational numbers, so that all rational numbers will eventually be ordered and enumerated in the limit set ${\cal F}:=\lim_{n\to \infty}{\cal F}_n$. Therefore, all rational numbers in the unit interval are contained in ${\cal F}$ \cite{Knuth}, something conjectured by Farey and independently proved by Haros and Cauchy \cite{Motif}.
At this point, we should emphasise that, while popularly known as Farey sequences, ${\cal F}_n$ were originally and independently studied by Charles Haros, publishing similar results (including some proofs) some 15 years before Farey \cite{beiler1964recreations}. In tribute, our graph-theoretic construction (Section III) is called Haros graphs.

\subsection{Farey binary tree, binary sequences and continued fractions}

\noindent A classical way of visualising the different elements of ${\cal F}_n$ is using the so-called {\it Farey binary tree} (see Fig. \ref{fig:FareyBinaryTree} for an illustration). 
This is an infinite complete binary tree (adding the nodes $0/1$ and $1/1$ as special cases) where each node corresponds to a given rational, computed using the mediant sum over specific numbers above in the tree. This tree is hierarchically organised in levels $\ell_n$, such that the first 4 levels are:
$$\ell_1=\left\lbrace\frac{0}{1},  \frac{1}{1}\right\rbrace; \ \ell_{2}=\left\lbrace\frac{1}{2}\right\rbrace; \ \ell_{3}=\left\lbrace \frac{1}{3},  \frac{2}{3}\right\rbrace, \  \ell_{4}=\left\lbrace\frac{1}{4}, \frac{2}{5}, \frac{3}{5},  \frac{3}{4} \right\rbrace$$
For $n\leq 3$, we have ${\cal F}_n=\cup_{i=1}^n \ell_i$, i.e. the Farey sequence corresponds to projecting all ancestors in the Farey binary tree up to that level. However, this is no longer true for $n>3$, for which we have: 

$${\cal F}_n \subset \bigcup_{i=1}^n \ell_i$$

For instance: $\cup_{i=1}^4 \ell_i= {\cal F}_4 \cup \{2/5,3/5\} $ (the mismatch is due to the definition of the Farey sequence, which imposes the denominator $q\leq n$). While the offset: 
$$\vert \bigcup_{i=1}^n \ell_i \vert-\vert {\cal F}_n \vert$$

grows without bounds, in the limit $n\to \infty$, one can prove that:

$$\text{card}\left(\bigcup_{i=1}^\infty \ell_i\right)=\text{card}({\cal F})$$

and, therefore, the Farey binary tree orders all real numbers \cite{Niqui}.\\

In this work, we consider the ordering as provided by the Farey binary tree rather than by the Farey sequences. However, for computational reasons, the numerical calculus used to represent some figures has been computed for the Farey sequence $\mathcal{F}_{1000}$. There are exponentially many different downstream paths of a certain depth in the Farey tree starting from $1/1$ and one can enumerate these by assigning to each path a unique binary sequence of symbols $L/R$ indicating descending to the Left/Right, respectively. For example, it can be observed in Fig. \ref{fig:FareyBinaryTree} that the rational number $8/13$ follows the symbolic path $LRLRL$ in the Farey Binary Tree. Since any finite path has an ending element $p/q$ and such rational is reached by a single finite path in the Farey binary Tree, one can establish a bijection between finite binary sequences and rational numbers in the unit interval \cite{Bonnano,Isola}. \\

\begin{figure}[htb]
\includegraphics[width=1\textwidth, trim=2cm 4cm 2cm 4cm,clip=true]{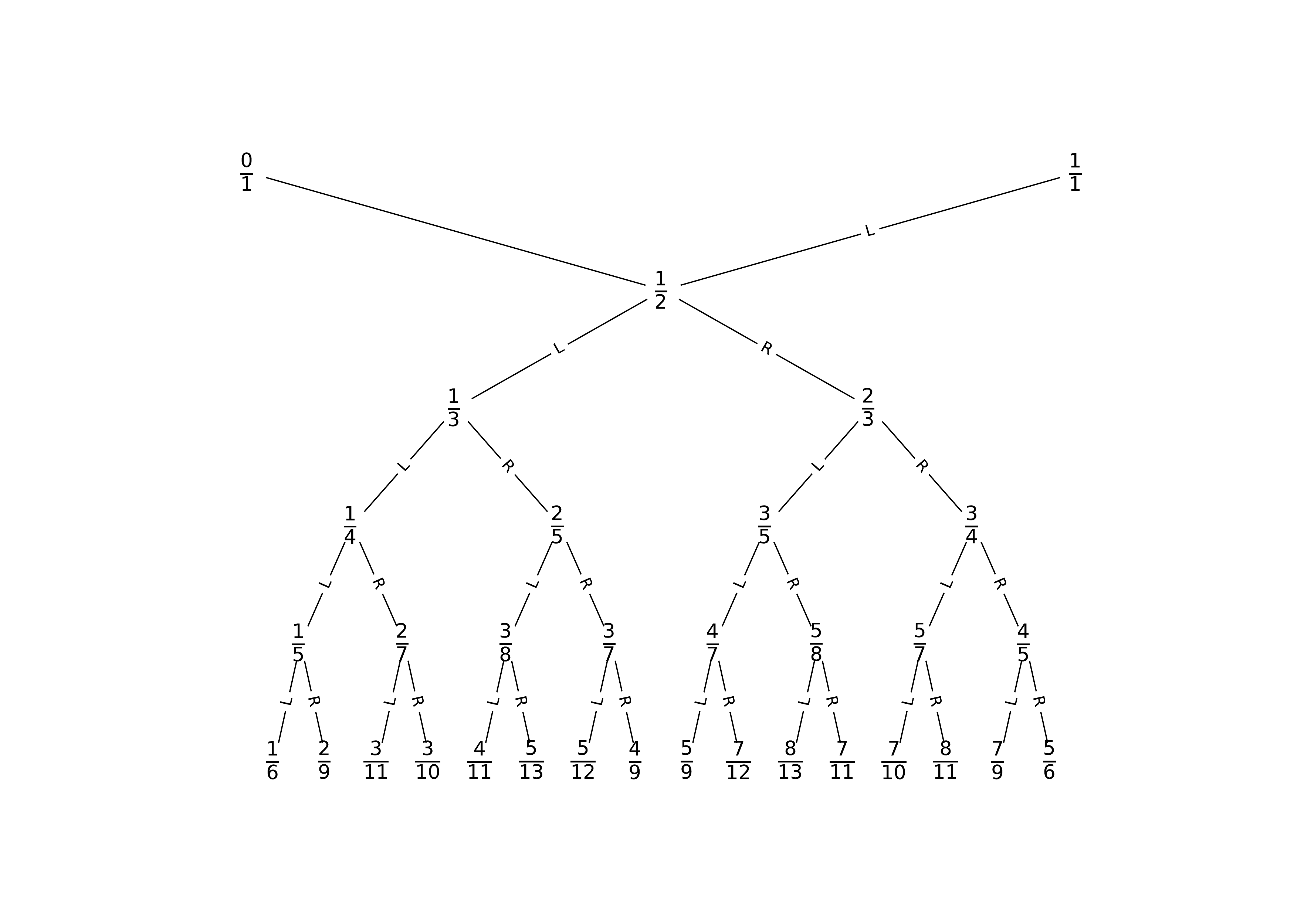}
\caption{Six levels of the Farey binary tree, where the special nodes $0/1$ and $1/1$ form the first level, $\ell_1$. Every irreducible rational number appears only once in the tree and can be reached following a path which is encoded in a symbolic binary sequence Left/Right. By convention, paths start in $1/1$; hence, the first symbol in every path will be $L$. The Farey sequence $\mathcal{F}_n$ is a subset of the rationals appearing in $\cup_{i=1}^n\ell_i$.}
\label{fig:FareyBinaryTree}
\end{figure}

\noindent Another way of representing rationals is via continued fractions. It is well known that irrational numbers in $[0,1]$ admit a description in terms of an infinite continued fraction, whose finite truncations (called convergents) are rational numbers. For instance, the reciprocal of the Golden number $\phi^{-1} = 0.618...$ is a solution of the quadratic equation $x^2 + x - 1 = 0$. Therefore, it fulfils $\phi^{-1} = \frac{1}{1 + \phi^{-1}}$ which, when interpreted recursively, leads us to: $$\phi^{-1} = \frac{1}{1 + \frac{1}{1 + \frac{1}{1 + ...}}}$$
The above equation determines that the reciprocal of the Golden number admits an expression as a continued fraction as: $\phi^{-1} = [1, 1, ...] = [\bar{1}]$. The successive convergents of this continued fraction are: $[1] = 1$, $[1,1] = 1/2$, $[1,1,1] = 2/3$, $[1,1,1,1] = 3/5$, and so on.
Interestingly, note that the Fibonacci sequence is generated by the recurrence equation $\mathfrak{F}_n = \mathfrak{F}_{n-1} + \mathfrak{F}_{n-2}$ and the initial conditions $\mathfrak{F}_0 = \mathfrak{F}_1 = 1$. The Fibonacci quotients $\left\lbrace \mathfrak{F}_{n-1}/\mathfrak{F}_n \right\rbrace_{n \geq 1}$ correspond to the convergents of $\phi^{-1}$ and: $\lim_{n\to \infty} \mathfrak{F}_{n-1}/\mathfrak{F}_n =  \phi^{-1}$.\\
\\
\noindent Now, for any given rational (or irrational) number, there exists a close relationship between its continued fraction expansion and its representation in terms of a binary sequence in the Farey binary tree \cite{Bonnano}. Indeed, a continued fraction $[a_1, a_2 ,... ]$ has a correspondent symbolic path in the Farey binary tree as $L^{a_1} R^{a_2} L^{a_3} ... $, where $L^q$ is interpreted as a sequence of $q$ Ls (in the rational case, the last symbol has an index $a_n -1$). For instance, $\phi^{-1} = [1, 1, 1,1,1,...] = LRLRLR...=\overline{LR}$, that is, an infinite zigzag.\\
\\
 \noindent As a result, and since any two subsequent convergents can be reached by a specific path in the Farey binary tree, it turns out that any irrational number in $[0,1]$ can be reached asymptotically via a particular infinite path. In other words, any irrational can be approximated in $[0,1]$ with arbitrary precision via convergents \cite{Nivenetal}, in such a way that in the limit the irrationals are reached asymptotically and therefore $\cup_{i=1}^\infty \ell_i={\cal F} = [0,1]$.

\section{Haros graphs}
\subsection{Definition and basic properties}
We start by considering a graph concatenation operator $\oplus$ (note that this is different from the mediant operator $\oplus$ defined in the previous section, but the abuse of notation is convenient and will become evident later). This operator was originally defined with a different notation (a so-called inflation operator) in \cite{flanagan2019spectral}, and we refer the reader to that paper for a precise definition. Here, instead, we provide a visual description, which we believe is more intuitive than the formal definition. Visually, the concatenation of two graphs consists on merging two extreme nodes (so-called boundary nodes) and connect the first and last nodes of the new graph with a new link, see Fig. \ref{fig:Fig2_FirstFarey} for an illustration. The boundary nodes of the two graphs that are involved in the merging are called merging nodes. It is easy to see that $\oplus$ is not commutative.\\
We are now ready to define Haros graphs:\\

\begin{figure}[htb]
\includegraphics[width=0.8\textwidth , trim=0cm 17cm 0cm 5cm,clip=true]{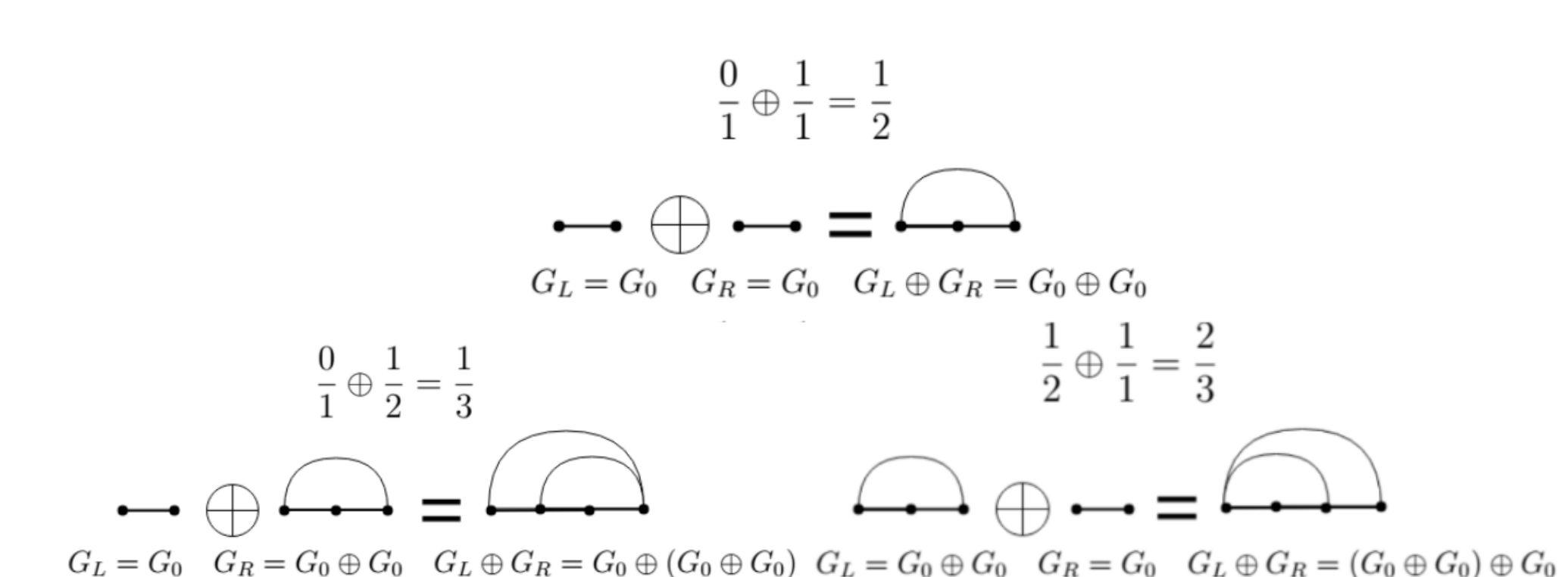}
\caption{(a) First line: Construction of $G_1 = G_0 \oplus G_0$. The graph concatenation operator merges the boundary nodes: labelling the nodes of $G_0$ as $[1,2]$, the boundary nodes are $2$ for $G_L$ and $1$ for $G_R$. The graph obtained by the concatenation has three nodes and the external nodes ($1$ and $3$ in $G_L \oplus G_R$) are linked by a new vertex.  \ (b) Second line: Construction of the two possible concatenations between $G_0$ and $G_1$.}
\label{fig:Fig2_FirstFarey}
\end{figure}

\noindent \textbf{Definition }(Haros graph)
\textit{Haros graphs are built by iteratively applying the concatenation operator $\oplus$ on a previous (ancestor) Haros graph, where the oldest ancestor $G_0$ consists of two nodes linked by an edge (see Fig. \ref{fig:FareyGraphTree} for an illustration).  Since $\oplus$ is not commutative, one can apply $\oplus$ in two ways to produce two offspring Haros graphs from the same ancestor Haros graph $G$, by either `left' concatenating  ($X_\ell\oplus  G$) and `right' concatenating ($G\oplus X_r$). $X_\ell$ is the closest `left' ancestor of $G$ and $X_r$ is the closest `right' ancestor of $G$ (Fig. \ref{fig:FareyGraphTree} clarifies this).}\\

\noindent The successive applications of $\oplus$ thus induce a binary sequence (left/right) very much like the one which enumerates paths in the Farey binary tree, actually Fig. \ref{fig:FareyGraphTree} is in one-to-one correspondence to the Farey binary tree. Since binary sequences describing paths in such tree are in a one-to-one correspondence with rationals, we can thus make a one-to-one correspondence between rationals and Haros graphs.\\

 
\noindent We illustrate now how to relate a given rational $x=p/q$ (where $p/q$ is an irreducible fraction) to its Haros graph $G_x$ (see again Figs. \ref{fig:Fig2_FirstFarey} and \ref{fig:FareyGraphTree} for an illustration): by definition we let $G_0 = G_{0/1} = G_{1/1}$ to be associated at the same time to rational numbers $0/1$ and $1/1$. Then we build $G_0\oplus G_0 $ and associate it with $G_{1/2}$, the Haros graph related to the rational number $1/2$.
At this point, we can now build two new possible Haros graphs, depending on how we concatenate again: $G_0\oplus G_{1/2}$,  which is identified with $G_{1/3}$, or $G_{1/2}\oplus G_0$, which is identified with $G_{2/3}$. 
In other words, we have the following equation: 
\begin{equation}
G_{p/q} \oplus G_{p'/q'}= G_{p/q \ \oplus \ p'/q'},
\label{eq:conca_eq}
\end{equation}

i.e. the concatenation operator between Haros graphs is identified with the mediant sum between rationals.
As a result, we obtain a one-to-one correspondence between the set of Haros graphs and the set of Farey sequences, and accordingly every number $x \in [0,1]$ can now be represented by a unique Haros graph $G_x$.
If $x$ is rational, there exists a finite binary sequence in the Farey binary tree that reaches $x$, and equivalently there is a finite sequence of concatenations that reaches the associated Haros graph; thus, $G_x$ is a finite graph that we denote a rational Haros graph. On the contrary, if $x$ is irrational, then $G_x$ is a countable infinite graph that denote an irrational Haros graph.\\

\begin{figure}[htb]
\includegraphics[width=0.85\textwidth]{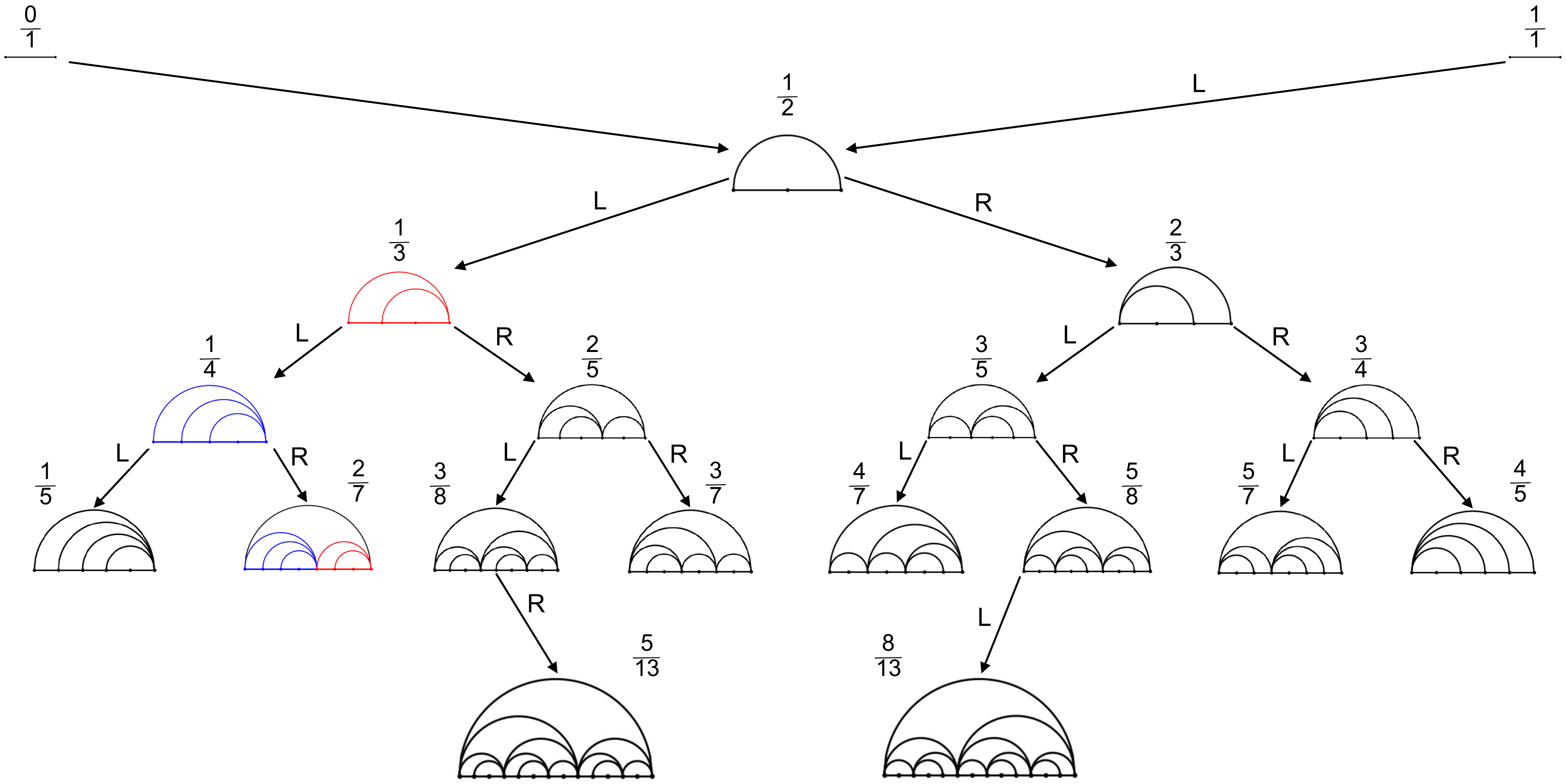}
\caption{Six levels of the Haros graph binary tree with the graphs associated with the corresponding rational fractions $p/q$ (for space reasons only two of these are shown at the sixth level). On the left, we highlight the Haros graph $G_{2/7}$, built as a concatenation of $G_{1/4}$ (blue) and $G_{1/3}$ (red).}
\label{fig:FareyGraphTree}
\end{figure}

\noindent \textit{Boundary node convention. } While by construction $G_{p/q}$ is an Haros graph of $q+1$ nodes, for convenience and to make calculations cleaner in what follows, we assume that the two extreme nodes of an Haros graph (linked by construction) can be identified as a single `boundary' node, while maintaining both incident edges (respectively, the rest of the nodes are called \textit{inner} nodes). For instance, if the first and last nodes have degree 2, then our convention is to say that the boundary node has degree 4. Such a boundary node is depicted at the end of the node sequence. 
For example, the degree sequence of $G_{1/2}$ is: $$[2,2,2] \to [2, 2 + 2] = [2, 4]$$ 
and for: $$G_{1/3} = [2,3,2,3] \to [3,2,2+3] = [3,2,5].$$ 
Similarly: $$G_{1/3} \oplus G_{1/2} = G_{2/5},$$ with degree sequences: $$[2,3,2,3]\oplus [2,2,2]= [2+1,3,2,3+2,2,2+1]=[3,3,2,5,2,3],$$ which is then presented as $[3,2,5,2,6]$.


\subsection{Degree distribution}
\label{Ddistr}
While trivially rational numbers $x\in \mathbb{Q}$ have $G_{x}$ with a finite vertex set and irrational numbers $x\in \mathbb{I}$ have an infinite vertex set, we wonder if the structure of these graphs allows us to further distinguish among classes of irrational numbers. 
Among the plethora of different graph properties at our disposal, we shall concentrate on the degree sequence, i.e. the ordered sequence whose i-th term provides the degree (number of edges) of the i-th node in the graph. This choice is indeed informed by the fact that Haros graphs form a subset of Horizontal Visibility Graphs (HVG), and a recent theorem \cite{canonical} proved that HVGs are unigraphs, and thus they are uniquely determined by their degree sequence, i.e., the degree sequence is a `maximally informative' property of the graph \cite{LacasaVisibility}.
In this section, we consider the probability degree distribution $P(k,x)$, that provides the probability that a node chosen at random in $G_x$ has degree $k$.\\ 

By construction, Haros graphs do not have isolated nodes, so $P(0,x)=0, \ \forall x$. Similarly, and with the exception of $G_0$ and $G_1$, $P(1,x)=0, \ \forall x$, so the subsequent analysis is on $k\geq 2$. We start by focusing on the interval $x\in [0,1/2)$ and $P(k,x)$ for $k\leq 4$. We can state the following theorem:\\

\begin{theorem}
\label{Tma:P234}
$\forall x \in(0,1/2)$, the first values of the degree distribution $P(k,x)$ of $G_x$ are:
\begin{equation}
P(k,x)=\left\{
\begin{array}{ll}
x,  & k=2 \\
1 - 2x, & k=3 \\
0, & k=4 \\
\end{array}%
\right.
\label{eq1}
\end{equation}
\end{theorem}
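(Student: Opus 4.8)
The plan is to reduce the statement to a purely combinatorial recursion on degree sequences and then induct over the subtree of the Farey binary tree rooted at $1/3$, which by the mediant structure described above exactly covers the interval $(0,1/2)$ (its closest ancestors being $0/1$ and $1/2$). First I would record the effect of $\oplus$ on \emph{full} (non-boundary-convention) degree sequences: if $A$ and $B$ have degree sequences $(\alpha_0,\dots,\alpha_q)$ and $(\beta_0,\dots,\beta_{q'})$, then $A\oplus B$ has degree sequence $(\alpha_0+1,\alpha_1,\dots,\alpha_{q-1},\,\alpha_q+\beta_0,\,\beta_1,\dots,\beta_{q'-1},\beta_{q'}+1)$, which is immediate from the visual description of the operator (the two outer nodes receive the new boundary edge and the right-boundary node of $A$ merges with the left-boundary node of $B$). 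For $x=p/q$ write $L(x)=\alpha_0$ and $R(x)=\alpha_q$ for the two extreme degrees, and let $a_k(x)$ count the nodes of degree $k$ strictly between them. Under the boundary convention the two extremes fuse into one node of degree $L(x)+R(x)$ while interior degrees are untouched, so the quantity $P(k,p/q)=n_k/q$ we must compute satisfies $n_k=a_k(x)$ for $k\in\{2,3,4\}$ as soon as $L(x)+R(x)\ge 5$. It therefore suffices to prove, for every rational $p/q\in(0,1/2)$, that
\[
a_2(p/q)=p,\qquad a_3(p/q)=q-2p,\qquad a_4(p/q)=0 .
\]

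The concatenation rule yields $L(A\oplus B)=L(A)+1$, $R(A\oplus B)=R(B)+1$, and, crucially, the decoupled count recursion
\[
a_k(A\oplus B)=a_k(A)+a_k(B)+[\,R(A)+L(B)=k\,],
\]
where $[\,\cdot\,]$ is the Iverson bracket, so that the only coupling between the two summands passes through the degree $m=R(A)+L(B)$ of the merged node. I would then run a strong induction on the tree-depth of $x$, carrying the auxiliary invariants $L(x)\ge 2$ and $R(x)\ge 3$ (both propagate trivially through the recursions and give $L+R\ge5$, justifying the reduction above), together with the structural fact that the closest ancestors satisfy $x_L\in\{0/1\}\cup(0,1/2)$ and $x_R\in(0,1/2]$. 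Since a left child is $G_{x_L}\oplus G_x$ and a right child is $G_x\oplus G_{x_R}$, I must evaluate $[\,m=k\,]$ for $m_L=R(x_L)+L(x)$ and $m_R=R(x)+L(x_R)$. For a right child, $m_R=R(x)+L(x_R)\ge 3+2=5$, so all three indicators vanish and the step is purely additive, reproducing $p''=p+p_R$ and $q''-2p''=(q-2p)+(q_R-2p_R)$; note that the right ancestor $1/2$ already satisfies the target formula ($a_2=1,\ a_3=0$), so it needs no special treatment. For a left child with $x_L\in(0,1/2)$ one has $R(x_L)\ge 3$, hence again $m_L\ge5$ and the clean additive step.

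The one genuinely delicate point — and the main obstacle — is the leftmost spine $x=1/k$, where the left ancestor is $0/1$ and the formula for $a_3$ \emph{fails on the ancestor itself}, since $a_3(0/1)=0$ whereas $q_L-2p_L=1$. Here I would first establish the spine characterisation: $x_L=0/1$ if and only if $x$ lies on the all-left branch (a right turn permanently replaces the left ancestor by a number in $(0,1/2)$), and on that branch $L(x)=2$. Consequently $m_L=R(0/1)+L(x)=1+2=3$, so the indicator $[\,m_L=3\,]=1$ contributes exactly the missing $+1$: one checks $a_3(1/k)=0+(q-2p)+1=q-2p+1=q'-2p'$, while $[\,m_L=2\,]=[\,m_L=4\,]=0$ keep $a_2$ and $a_4$ correct. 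The base cases $G_0$, $G_{1/2}$, $G_{1/3}$ are verified by hand, and this closes the induction for all rationals in $(0,1/2)$.

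Finally, for irrational $x\in(0,1/2)$ the distribution $P(k,x)$ is the degree density of the infinite graph $G_x$, obtained as the limit of the finite distributions along the convergents $p_n/q_n\to x$ (eventually inside $(0,1/2)$). Since $a_2/q=p/q\to x$, $\,a_3/q=(q-2p)/q\to 1-2x$ and $a_4=0$ are continuous functions of $p/q$, the values pass to the limit and yield the stated $P(k,x)$ for irrationals as well, completing the proof.
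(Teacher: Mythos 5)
Your proposal is correct and follows essentially the same route as the paper's Appendix A: induction over the Farey/Haros tree using additivity of the interior degree counts under concatenation, with the spine $x=1/q$ (left ancestor $0/1$) singled out as the one case where the merged node contributes the extra degree-$3$ node. Your write-up is tighter than the paper's in that it makes explicit the invariants $L(x)\ge 2$, $R(x)\ge 3$ (which justify discarding the boundary node and force the merged degree to be $\ge 5$ off the spine) and adds the limiting argument for irrational $x$, which the paper leaves implicit.
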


See Appendix \ref{appendix:A} for a proof. Now, an important but obvious observation is that there exists a mirror symmetry with respect to $1/2$ ($x\to 1-x$) in the construction of Haros graphs (see Fig. \ref{fig:FareyGraphTree}) which directly implies $P(k,x)=P(k,1-x)$. Accordingly, for $x\in (1/2,1)$ the values in Eq. \ref{eq1} must be replaced by $P(2,x) = 1 - x$, $P(3,x) = 2x - 1$, and $P(4,x)=0$.\\ 

Note that, conditioned on a generic but fixed degree $k=\kappa$, $P(\kappa,x)$ is a real-valued function with domain $[0,1]$. In this sense, by construction, the boundary cases $G_{0/1}, G_{1/2}$ and $G_{1/1}$ are associated with isolated discontinuities of this function in $k=2,3,4$. In Fig. \ref{fig:P234} we show the numerical values for the functions $P(2,x), P(3,x)$ and $P(4,x)$ computed for all Haros graphs $G_x$ with $x \in {\cal F}_{1000}$, in full agreement with the previous theorem and highlighting the above mentioned mirror symmetry. Consequently, from now on, we will restrict the analysis to the interval $x\in [0,1/2]$.\\ 

\begin{figure}[htb]
\includegraphics[width=0.4\columnwidth]{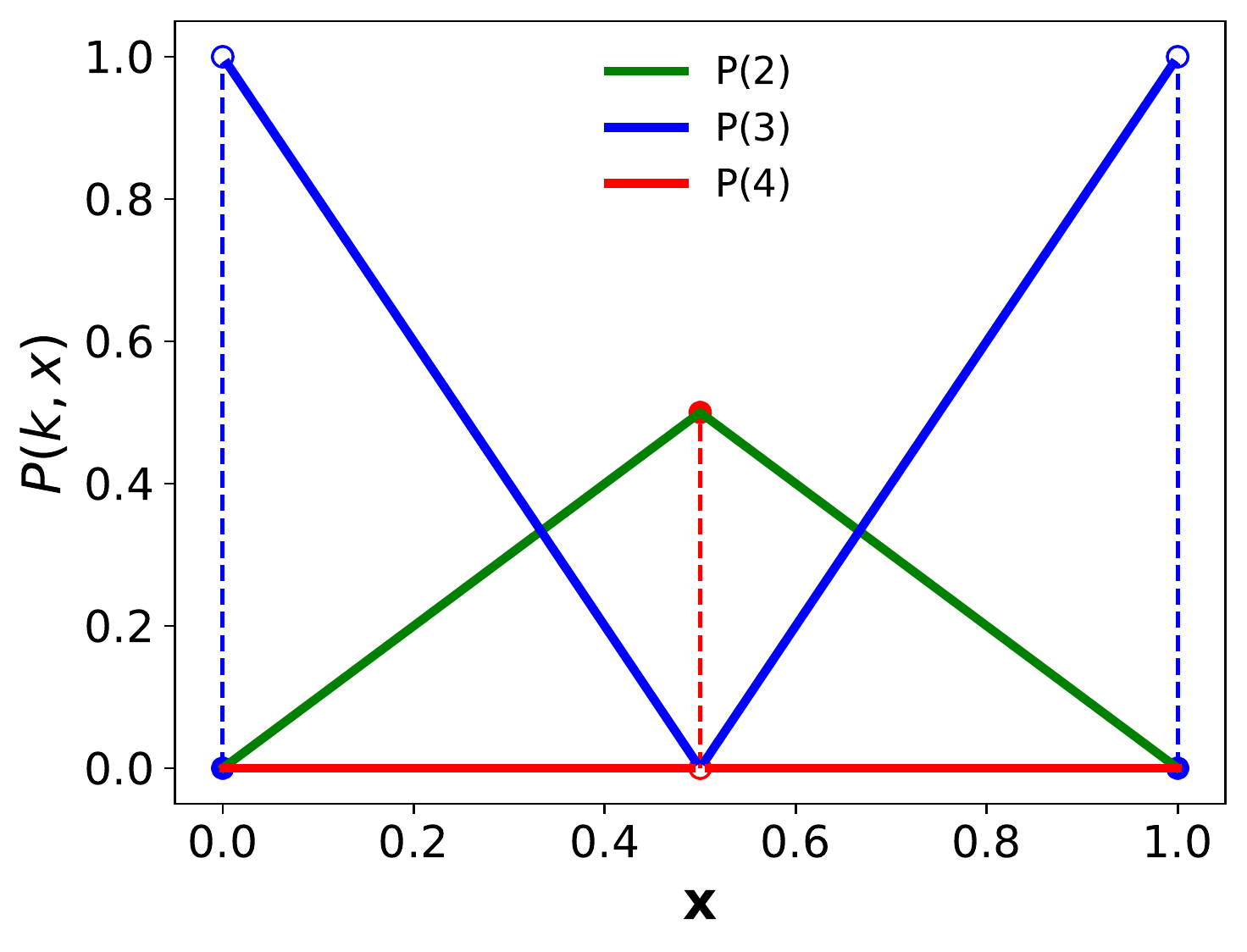}
\caption{Numerical plot of the degree distribution $P(2,x)$ (green), $P(3,x)$ (blue) and $P(4,x)$ (red) of $G_x$ as a function of $x$, for all $x \in {\cal F}_{1000}$, showing a perfect good agreement with Eq. \ref{eq1}. Note that $P(3,x)$ presents two discontinuities in the graphs $G_{0/1}$ and $G_{1/1}$ (blue dots), and likewise $P(4,x)$ presents a discontinuity at $G_{1/2}$.}
\label{fig:P234}
\end{figure}


It is not easy to find in closed form an expression for $P(k,x)$ for a generic $k,x$. Let us consider the case $k=5$. One can prove by induction that:
\begin{equation}
P(5,x)=\left\{
\begin{array}{ll}
3x-1,  & x\in(1/3,1/2) \\
-3x+2,  & x\in(1/2,2/3) \\
0, & \text{otherwise.} \\
\end{array}%
\right.
\label{eqk5}
\end{equation}
Notice that the degree $k=5$ appears with a non-null probability for graphs $G_x$ where $x$ belongs to the subintervals induced by $\cup_{i=1}^3 \ell_i=\mathcal{F}_3$: $(1/3,1/2)$ and $(1/2,2/3)$. It turns out that a generic degree $k$ only `emerges' (i.e. leading to a non-null probability in the degree distribution) after a sufficiently long and specific descent in Farey's binary tree, i.e., for specific subintervals of $x$. In this sense, the following theorem can be stated: \\

\begin{theorem}
\label{Tma:Rep}
For all $\kappa \geq 5, \ P(k=\kappa,x) = 0$ if and only if there is a symbol repetition in the binary sequence of $G_x$ (either $LL$ or $RR$) at the position $\kappa-2$, or equivalently, a $LL$ or $RR$ on the downstream path at the level $\kappa - 2$ of the Haros graph binary tree.
\end{theorem}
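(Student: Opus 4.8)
The plan is to prove the statement by following the descent $G^{(0)}=G_0, G^{(1)}, G^{(2)},\dots$ that reaches $G_x$ and recording where each degree is \emph{born}. The key starting observation is a precise reading of the concatenation rule: in $G\oplus H$ every interior node of $G$ and of $H$ keeps its degree, while only the first node of $G$ (which gains $+1$ from the closing edge), the last node of $H$ (also $+1$), and the two merged boundary nodes (which fuse into a single node of degree $l(G)+f(H)$) are altered. Hence each descent step creates exactly one brand-new interior node whose degree is thereafter \emph{frozen}, and a short induction shows that the set of interior degrees of $G^{(j)}$ equals the union of these frozen degrees $\{D(1),\dots,D(j)\}$ together with the imported interior degrees of the ancestor glued at that step; since that ancestor lies on the same path, it contributes only degrees already produced earlier. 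So the whole problem reduces to computing $D(j)$, the degree of the node born at step $j$.

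For this I would track the pair $(f,l)=(\deg\text{first},\deg\text{last})$ and the boundary degree $\beta=f+l$. Writing $s_1s_2\cdots$ for the $L/R$ code of $x$ (with $s_1=L$), I first establish the invariant $\beta^{(j)}=j+3$ for $j\ge 1$ by showing that each descent raises $\beta$ by exactly one. This uses the Stern--Brocot fact that, along a maximal run of equal symbols, the relevant Farey neighbour is frozen and coincides with the grandparent $G^{(p-2)}$ taken at the run's starting position $p$; from $l(X_r)=l^{(j-1)}-1$ (and its mirror) one gets $\beta^{(j)}=\beta^{(j-1)}+1$. With the invariant in hand, the degree of the newborn node at a \emph{switch} (a position $p$ with $s_{p-1}\neq s_p$) is $D(p)=l^{(p-1)}+f(X_r)=\beta^{(p-2)}+1=p+2$, and, up to the obvious left/right symmetry, it stays equal to $p+2$ along the rest of the run because there $X_r=G^{(p-2)}$ and $l$ are constant. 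The two exceptional runs, the initial $L$-run ($p=1$) and a run opened at $p=2$, instead always freeze degree $3$ (their neighbour is $G_0$), and the very first step freezes degree $2$.

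Collecting these facts, the interior-degree set of $G_x$ is exactly $\{2,3\}\cup\{p+2:\ s_{p-1}\neq s_p,\ p\ge 3\}$. Consequently, for $\kappa\ge 5$ an interior node of degree $\kappa$ exists if and only if $\kappa-2$ is a switch position, i.e. $s_{\kappa-3}\neq s_{\kappa-2}$, which is precisely the absence of an $LL$ or $RR$ at position $\kappa-2$. To finish I would check that the boundary node cannot spoil the equivalence: whenever position $\kappa-2$ exists one has $\beta=j+3\ge\kappa+1>\kappa$, so the boundary degree is never $\kappa$; this matches Eq.~\eqref{eqk5} for $\kappa=5$, and the mirror symmetry $x\mapsto 1-x$ extends everything to $(1/2,1)$.

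The hard part will be the bookkeeping of Farey neighbours needed for the invariant and for the frozen-degree computation: one must verify carefully that along a run the operand glued at each step really is the fixed grandparent, and that this produces the clean telescoping $\beta^{(j)}=j+3$. A secondary subtlety, specific to irrational $x$, is upgrading ``degree $\kappa$ is present'' to ``$P(\kappa,x)\neq 0$'': since $G_x$ is then infinite one must exhibit positive density, which follows from the self-similar proliferation of copies of the finite subgraph that first carries degree $\kappa$ (the same self-affinity exploited later through the de~Rham curve). Finally, it is worth flagging that the anomalous run opened at $p=2$ freezes degree $3$ rather than $4$, so degree $4$ never appears as an interior node; this is exactly why the clean ``position $\kappa-2$'' statement must be restricted to $\kappa\ge 5$.
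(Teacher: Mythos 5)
Your proposal is correct and follows essentially the same route as the paper's Appendix B: your invariant $\beta^{(j)}=j+3$ is exactly the paper's Lemma on the boundary-node connectivity ($\kappa+2$ at level $\kappa$), and your computation that the newborn merged node has degree $p+2$ at a switch but only reproduces the run's already-frozen degree at a repetition is precisely the content of the paper's Lemma on symbol repetitions (merging degree $n+2$ versus $m+3$). The subtleties you flag -- the boundary node never masquerading as an interior degree $\kappa$, and upgrading ``degree present'' to ``positive density'' for irrational $x$ -- are handled no more rigorously in the paper than in your sketch.
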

\noindent Proof of this theorem is available in Appendix \ref{appendix:B}. This theorem has several implications:

\begin{itemize}
\item  The emergence of a 'new' degree $\kappa$ occurs for the first time at a certain level of the Haros graph binary tree (see Fig. \ref{fig:P5_aparicion} for an illustration). It can then be observed that if the last downstream step was `L' (analogously `R'), a subsequent step `L' will not generate an Haros graph with inner nodes with degrees larger than $\kappa$, as its only effect is to replicate the degree sequences (except in the boundary node). On the contrary, if the subsequent downstream step makes a symbol change ($L\to R$ or $R\to L$), then the resulting Haros graph inherits an inner node with degree ($k>\kappa$).\\
An example will help us illustrate this observation: Consider $G_{1/3}$, generated as $G_{0/1} \oplus G_{1/2}$. Its associated path in the Haros graph binary tree is $LL$ and its degree sequence is $[3,2,2 + 3 = 5]$, i.e., disregarding the boundary node of degree $2+3=5$, only the degrees $k=2$ and $k=3$. A further descent to the left ($LLL$) generates the graph $G_{1/4}$ whose degree sequence is $[3,3,2,2 + 4 = 6]$, where the degree $k = 5$ does not appear, whereas if the descent was towards the right ($LLR$), the resulting graph is $G_{2/5}$ with degree sequence $[3,2,{\bf 5},2,3 + 3 = 6]$, having an inner node with $k=5$.\\ 

In figure \ref{fig:P5_aparicion}, we can see how all Haros graphs whose binary sequence starts with $LLR$ have inner nodes with $k = 5$, and this is not true for those graphs with binary sequence starting with $LLL$. In short, the degree $k = 5$ will not appear if it has not been generated in the descent starting from $G_{1/3}$. This fact is confirmed in Fig. \ref{fig:Fig_xvsP(k)}, which illustrates that degree $k = 5$ has a non-zero probability only for Haros graphs in the interval $[1/3, 2/3]$ (except the case $x = 1/2$), which are those whose path in the tree starts with $LLR$ or $LRL$.

\begin{figure}[t]
\includegraphics[width=1\columnwidth]{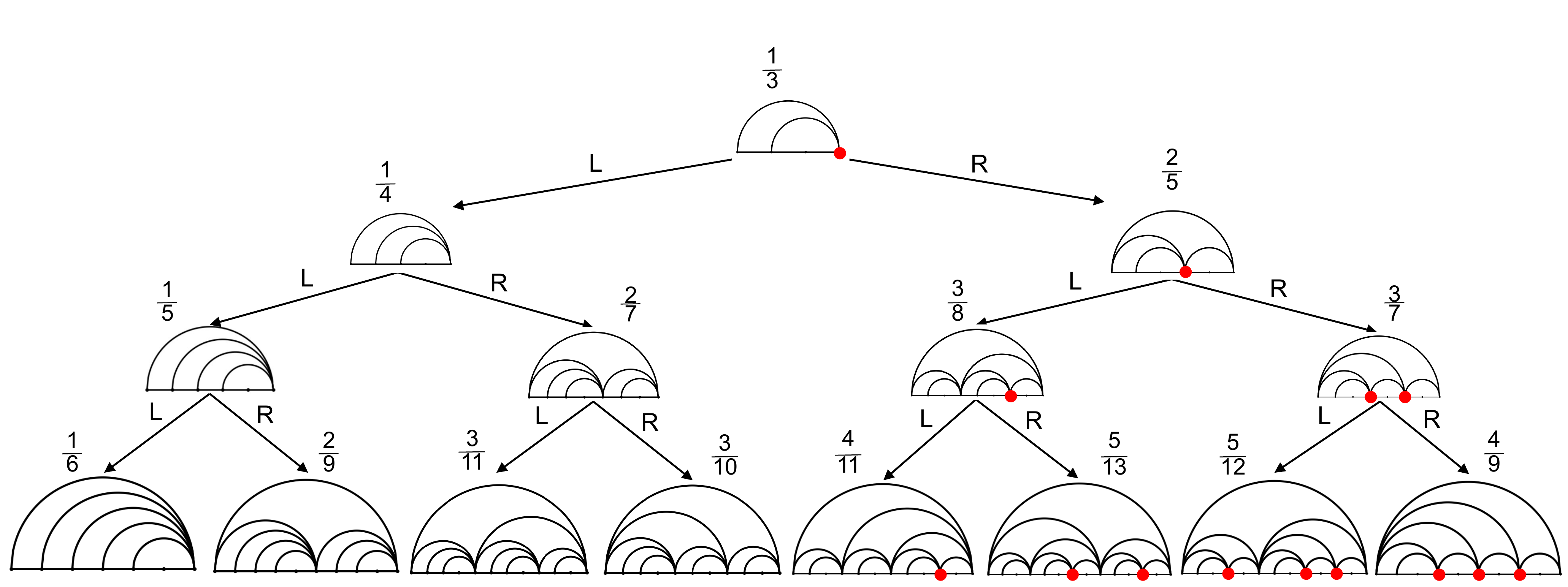}
\caption{Further visual aid of paths in the Haros graph binary tree. The symbolic path in the Haros graph tree of $G_{1/3} = G_{0/1} \oplus G_{1/2}$ is $LL$ and its degree sequence is $[3, 2, 2 + 3 = 5]$, i.e., disregarding the extreme node of eventual connectivity $2 + 3 = 5$, then only degrees $k = 2, 3$ emerge. A further descent to the left $L^3$ generates the graph $G_{1/4}$ whose degree sequences is $[3, 3, 2, 2 + 4 = 6]$, where degree $k  = 5$ does not appear; while if the descent is to the right $LLR$, the graph obtained is $G_{2/5}$ whose degree sequence is $[3, 2, 5, 2, 3 + 3 = 6]$. In the subtree starting at $LLR$ (level $n = 3$), there are Haros graphs with nodes with degree $k = n + 2 = 5$ (red dots). However, in the Haros graphs whose paths start at $LLL$, no nodes with degree $k = 5$ appear. Nodes with degree $k = 5$ will not appear if they have not been generated in the descent starting at $1/3$.}
\label{fig:P5_aparicion}
\end{figure}

\item The hole patterns (the null values in $P(k,x)$) can be completely described by the binary sequence in the Farey binary tree, or equivalently, by the continued fraction of the number $x$. Hence, the construction of Haros graphs is intimately related to the subintervals induced by $\cup_{i=1}^n \ell_n$, see also Figure \ref{fig:Fig_xvsP(k)} for an illustration.
\end{itemize}

One can now use theorem \ref{Tma:Rep} and its implications to discuss the properties of $P(k,x)$ for selected families of irrational numbers. For instance, consider the reciprocal of the Golden number. Its symbolic path in the Farey binary tree consists of an infinite zigzag $\overline{LR}$. Theorem \ref{Tma:Rep} establishes that the degree distribution of its associated Haros graph is necessarily holeless, and, in fact, this is the only irrational number with such property.  Furthermore, two classical results --the Lagrange's theorem and Galois' theorem-- establish that the periodic continued fractions are, precisely, the quadratic irrationals \cite{Hardy}. Therefore, by virtue of theorem  \ref{Tma:Rep}, the Haros graphs of quadratic irrationals will have a degree distribution with a periodic pattern of zeros.

\subsubsection{A conjecture for a closed expression of \texorpdfstring{$P(k,x)$}{Pkx}}

Observe that we started the preceding section stating that a closed-form expression for a generic $P(k,x)$ is difficult to obtain. However, based on theorem \ref{Tma:Rep} and on the triangular-like dependence on $x$ observed for some values of $k$ (Fig. \ref{fig:Fig_xvsP(k)}) we can now state the following conjecture.\\

\begin{conjecture}
 \textit{Let $\ell_n$ be the set of Farey fractions emerging at level $n \geq 1$ of the Farey binary tree. For instance, $\ell_3 = \left\{ 1/3, 2/3\right\}$ and $\ell_4 = \left\{ 1/4, 2/5, 3/5, 3/4 \right\}$, and, in general, at level $\ell_n$ there are $\textnormal{Card}( \ell_n ) = 2^{n-2}$ new fractions added. Then, for all real number $x$ and connectivity $k\geq 5$ in $G_x$, we have:}
\begin{equation}
P(k,x)=\left\{
\begin{array}{ll}
q_i \cdot x - p_i, \  \textnormal{if}\  x\in\left(\frac{p_i}{q_i},\frac{a}{b}\right), \\ 
-q_{i+1} \cdot x + p_{i+1}, \ \textnormal{if}\ x\in\left(\frac{a}{b}, \frac{p_{i+1}}{q_{i+1}} \right) ,\\  
1/q_i,\ \textnormal{if}\ x =\frac{p_i}{q_i},\\ 
 0,\ \textnormal{otherwise},
\end{array}%
\right.
\label{eqkgeneral}
\end{equation}
\end{conjecture}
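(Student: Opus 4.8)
The plan is to convert the statement into an exact lattice-point count and then exploit the fact that, away from a controlled set of ``creation'' sites, that count is additive under the mediant and hence affine in $(p,q)$. For a rational $x=p/q$, let $N_k(x)$ be the number of nodes of $G_x$ of degree $k$, so that $P(k,x)=N_k(x)/q$ and the claimed affine dependence on $x$ is equivalent to $N_k$ being an affine function of $(p,q)$ on each interval. The starting point is the rule by which degree sequences compose under $\oplus$: if $G_u,G_w$ have extreme-node degrees $(L_u,R_u),(L_w,R_w)$, then $G_u\oplus G_w$ has the inner nodes of $G_u$, one new merge node of degree $R_u+L_w$, and the inner nodes of $G_w$, with new extremes $L_u+1$ and $R_w+1$. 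Writing $N_k^{\mathrm{in}}$ for the inner-node count and using $G_{p/q}=G_{A_L}\oplus G_{A_R}$ where $A_L,A_R$ are the two Farey ancestors of $p/q$ (Eq.~\ref{eq:conca_eq}), this gives the master recursion
\begin{equation}
N_k^{\mathrm{in}}(p/q)=N_k^{\mathrm{in}}(A_L)+N_k^{\mathrm{in}}(A_R)+\mathbf{1}\!\left(R_{A_L}+L_{A_R}=k\right),\qquad L_{p/q}=L_{A_L}+1,\ R_{p/q}=R_{A_R}+1.
\label{eq:master}
\end{equation}

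Next I would fix $k$ and read off the support from Theorem~\ref{Tma:Rep}: $P(k,\cdot)$ is nonzero exactly on those subintervals whose binary address switches symbol at position $k-2$. These organise into pairs of Farey subintervals flanking each peak $a/b\in\ell_{k-3}$, whose two tree-children $p_i/q_i,p_{i+1}/q_{i+1}\in\ell_{k-2}$ are the endpoints; here $a/b=p_i/q_i\oplus p_{i+1}/q_{i+1}$ and $a/b$ is Farey-adjacent to each child, so $aq_i-bp_i=1$. On the left ramp $I=(p_i/q_i,a/b)$ I would prove by strong induction on $q$ that $N_k^{\mathrm{in}}(p/q)=q_i\,p-p_i\,q$. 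Since this linear function is mediant-additive, the induction closes once the indicator in \eqref{eq:master} is controlled, which is the content of the Key Lemma: the merge produces a degree-$k$ node precisely when the right ancestor is the peak $a/b$ (equivalently, along the right boundary spine of the subtree rooted at $p_i/q_i\oplus a/b$), and never otherwise; and this lone firing exactly offsets the unit discrepancy between the linear value $q_i a-p_i b=1$ at $a/b$ and the true count $N_k^{\mathrm{in}}(a/b)=0$. Division by $q$ then yields $P(k,x)=q_i x-p_i$ on $I$, and the mirror symmetry $P(k,x)=P(k,1-x)$ gives $-q_{i+1}x+p_{i+1}$ on the right ramp; the slopes are independently pinned down by the limits $P\to 0$ as $x\to(p_i/q_i)^+$ and $P\to 1/b$ as $x\to(a/b)^-$.

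The isolated rational values are a separate, short computation. An easy induction shows $G_{p/q}$ with $p/q\in\ell_n$ has a boundary node of degree $n+2$; hence at a flank $p_i/q_i\in\ell_{k-2}$ the unique degree-$k$ node is the boundary node, forcing the isolated value $P(k,p_i/q_i)=1/q_i$, while at the peak $a/b\in\ell_{k-3}$ the boundary degree is $k-1$ and no inner node of degree $k$ survives, forcing the drop to $0$ (these are the discontinuities already seen for $k=5$ at $1/3,1/2,2/3$). The irrational case then follows by evaluating the established affine formula along the convergents of $x$ and passing to the limit, the density of rationals in $I$ and continuity of each linear piece guaranteeing the value.

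The main obstacle is the Key Lemma, i.e.\ controlling the extreme-node degrees $(L,R)$ throughout the subtree so as to certify that the resonance $R_{A_L}+L_{A_R}=k$ occurs only on the boundary spine. Along a monotone run of descents one extreme stabilises and the other grows by one at each step, so \eqref{eq:master} immediately produces an affine count and the firing is transparent; the real work is the interleaved left/right descents, where both extremes stay moderate and one must supply an explicit invariant --- either path-indexed closed forms for $L$ and $R$, or a uniform bound $R_{A_L}+L_{A_R}\ge k+1$ for every interior merge --- guaranteeing that each surviving degree-$k$ node is inherited from a boundary sub-copy rather than freshly created. Establishing that invariant, and verifying it meshes with the seed values that fix the slopes $q_i,-p_i$, is where essentially all the difficulty resides.
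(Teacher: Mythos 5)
First, a point of reference: the paper does not prove this statement. It is stated as a conjecture and supported only by numerical verification up to $k\leq 60$, so there is no ``paper's own proof'' to match your argument against. Your proposal should therefore be judged on whether it closes the conjecture, and it does not: you have correctly reduced the problem to a single combinatorial claim (your ``Key Lemma''), but you explicitly leave that claim unproven, and it is precisely the content that separates the conjecture from a theorem. The scaffolding around it is sound and in fact consistent with partial results already in the paper: the support structure you invoke is Theorem \ref{Tma:Rep}; the fact that the boundary node of a graph at level $n$ has degree $n+2$ and is the unique maximal-degree node is Lemma \ref{lemma:5} together with Corollary \ref{cor:3}, which justifies your isolated values $P(k,p_i/q_i)=1/q_i$ and $P(k,a/b)=0$; the mediant-additivity of the linear form $q_i p-p_i q$ is correct; and the passage to irrationals along convergents is routine once the rational case is in hand. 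The identity $a/b=p_i/q_i\oplus p_{i+1}/q_{i+1}$ holds only after reduction (the raw mediant of the two tree-children of $a/b$ is $3a/3b$), but this is cosmetic.

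The genuine gap is the Key Lemma itself: you must show that for every merge $G_{A_L}\oplus G_{A_R}$ with both ancestors strictly inside the open ramp, the new merge node has degree different from $k$ (so the indicator in your master recursion vanishes and the affine count propagates), while the indicator fires exactly once along the spine adjacent to the peak $a/b$, offsetting the unit discrepancy $q_i a - p_i b=1$. The paper's Appendix~B gives you the sum $L+R=n+2$ of the extreme-node degrees at level $n$ and the merge degrees for symbol repetitions (Lemma \ref{lem:4}), and Theorem \ref{Tma10} gives a scaling $P(k,x)=(1+x)P(k+1,F(x))$ that would dispose of monotone runs of descents; but neither controls the individual values of $L_{A_R}$ and $R_{A_L}$ under arbitrarily interleaved left/right descents, which is what your uniform non-resonance bound requires. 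One plausible route is a path-indexed closed form: if the address of $p/q$ has continued-fraction digits $[a_1,\dots,a_m]$, the two extreme degrees can be written explicitly in terms of the last two digits, from which the merge degree at the next step is computable and the resonance condition becomes an arithmetic statement about partial quotients. Until that invariant (or an equivalent one) is established and checked against the seed values fixing the slopes, the argument remains a strategy rather than a proof, and the statement remains, as in the paper, a conjecture.
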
 

\noindent \textit{with:} $$\frac{p_i}{q_i}, \frac{p_{i+1}}{q_{i+1}} \in \ell_{k-2}, \frac{a}{b} \in \ell_{k-3} \; \textnormal{ and } \; \frac{p_i}{q_i} < \frac{a}{b} <\frac{p_{i+1}}{q_{i+1}}.$$

We have numerically verified the correctness of this conjecture up to $k\leq 60$, and Fig. \ref{fig:Fig_xvsP(k)} illustrates it for $k\leq 8$. Equation \ref{eqkgeneral} implies that the degree distribution is a piecewise-linear function in some subintervals defined by the elements at a certain level of the Farey binary tree. Moreover, the figure also points out several important aspects of the degree distribution:

\begin{itemize}
\item The emergence of different degrees is related to whether $G_x$ is located in a specific subinterval defined by Farey fractions, in agreement with theorem \ref{Tma:Rep} and its implications discussed above. 
    
\item $P(k,x)$ seems to have a self-similar structure. 
    
\item $P(k,x)$ seems to be continuous except on a null-measure set of points (red points), where we have removable discontinuities.
\end{itemize}

The next two subsections address these last two observations.

\begin{figure}[h!]
\includegraphics[width=0.5\columnwidth, trim=1cm 4cm 2cm 5cm,clip=true]{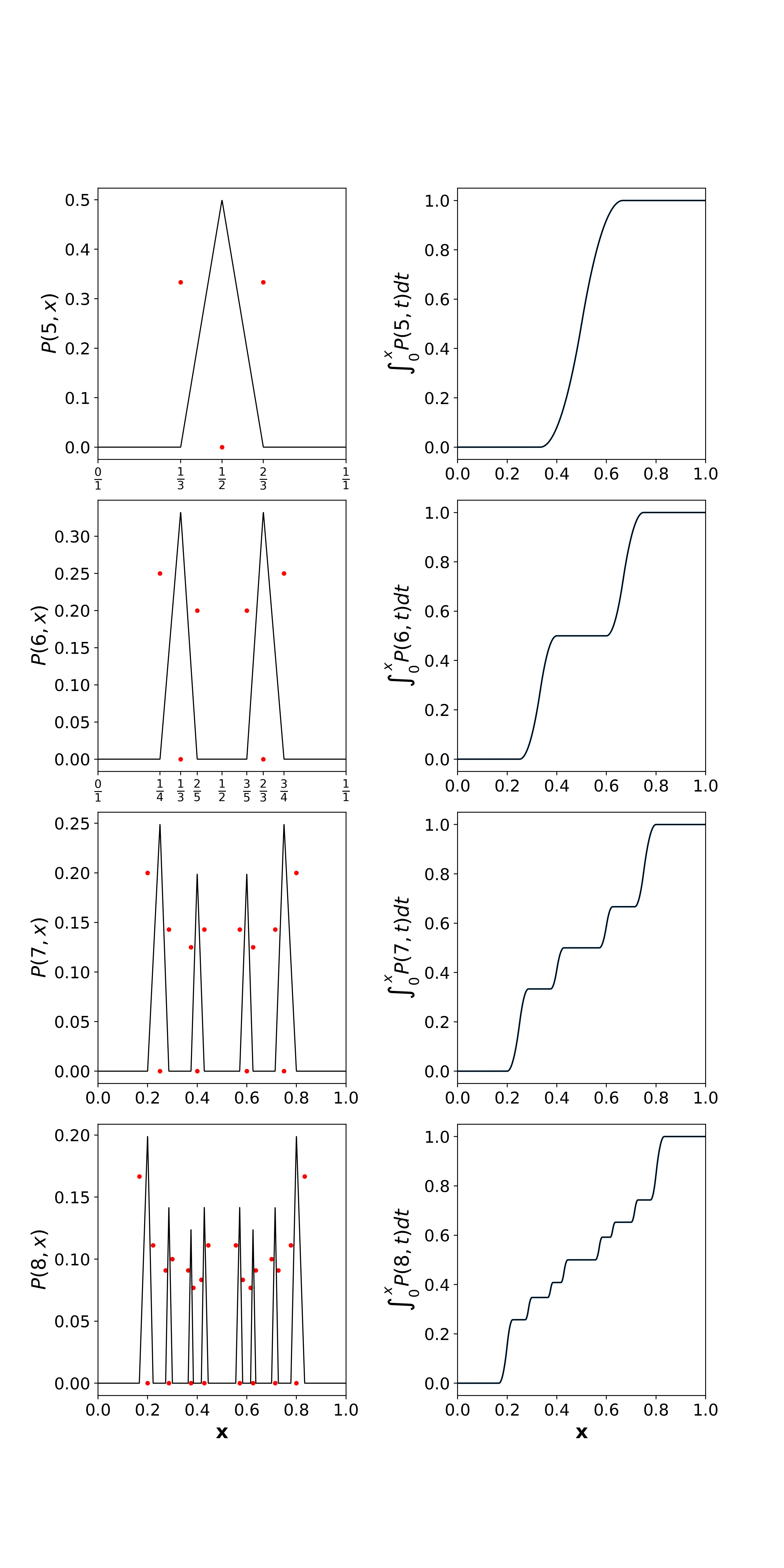}
\caption{(Left panels) Degree distribution $P(k,x)$ as a function of $x$, computed numerically for $k = 5, 6, 7, 8$, for all Haros graphs $G_x$ with $x \in \mathcal{F}_{1000}$. Red points represent removable discontinuities. Solid lines are in perfect agreement with Eq.\ref{eqkgeneral}. (Right panels): Cumulative distributions. As $k$ grows, the cumulative distributions approach a Cantor's staircase function, suggesting self-similarity of $P(k,x)$. This figure supports the study of the behaviour of $P(k,x)$ established in Subsection \ref{Ddistr}. The 'peak' or triangle where the degree distribution is positive is located between two Farey fractions. These fractions are the nodes that form the level $n = \kappa - 2$ of the Farey binary tree (e.g. the nodes $1/3$ and $2/3$ are the level $3$). For example, see the plot of $P(k = 5, x)$. The degree sequence is zero for Haros graphs $G_x$ with $x < 1/3$ or $x > 2/3$, that is, Haros graphs whose symbolic path starts with $LLL$ or $LRR$. In the subinterval $(1/3, 1/2)$, $P(5,x) = 3x + 1$ whereas $P(5,x) = -3x + 2$ in $(1/2, 2/3)$. Moreover, the degree distribution $P(k+1, x)$ can be obtained as a duplicated and scaled version of $P(k,x)$ through the function $F(x)$ defined in Eq. \ref{eq:F}.  }
\label{fig:Fig_xvsP(k)}
\end{figure}

\subsubsection{Self-similarity of \texorpdfstring{$P(k,x)$}{Pkx}}

    To dig into the particular scaling properties of $P(k,x)$, it can be observed that the right panel of Fig. \ref{fig:Fig_xvsP(k)} depicts the cumulative degree distributions, showing Cantor's staircase function shapes. This further suggests that $P(k,x)$ has a self-similar structure, where $P(k=\kappa+1, x)$ is found by adequately scaling and shifting the base of the `triangle' in $P(k=\kappa,x)$ (left panel of the same figure).\\ 
    To explore the scaling equations, let us consider the Farey binary tree and define $T_n$ to be the subtree whose asymptotic end nodes densely populate the interval ${\cal I}_n=(1/(n+1),1/n]$ (see Fig.\ref{fig:Arbol_coloreado} for an illustration of $T_2$, $T_3$ and $T_4$). Each $T_n$ has a root node at the rational number $2/(2n+1)$ obtained as the mediant sum of $1/(n+1)  \oplus 1/n$.  In terms of symbolic paths in the Farey binary tree, the roots of $T_n$'s have symbolic paths $L^nR$ (Fig. \ref{fig:Arbol_coloreado} highlights the root nodes of $T_2$, $T_3$ and $T_4$ in green colour). The descendants of such roots have a generic symbolic path $L^nR{\cal P}$, where $\cal P$ is an arbitrary binary sequence. For illustration, in Fig.\ref{fig:Arbol_coloreado} we depict in magenta the symbolic paths $L^nRL$, for $n\geq 2$, associated to rational numbers $3/(3n + 2)$; and we depict in orange the symbolic paths $L^nR^2$, for $n\geq 2$, associated to rational numbers $3/(3n + 1)$ .\\
    In terms of continued fractions, the subintervals ${\cal I}_n$, or equivalently the subtree $T_n$  starting in the root nodes $2/(2n+1)$, are formed by the real numbers which continued fractions have as the first element $a_1 = n$. The families of nodes described above have the following continued fraction expansions: blue nodes $\to [n]$, green nodes $\to [n,2]$, magenta nodes $\to [n,1,2]$, and orange nodes $\to [n,3]$. Now, since $[0,1/2]=\cup_{i=2}^\infty {\cal I}_n$, any arbitrary (real) number $x$ in $[0,1/2]$  belongs to a given subtree $T_k$. Suppose for the sake of argument that $x\in {\cal I}_k$, and consider the function:
 \begin{equation}
     F(x)=\frac{x}{1+x}
     \label{eq:F}
 \end{equation} 
 It is easy to see that the function $F(x)$ acts on $x=[a_1 = k,a_2,a_3,...]$, resulting in $F(x)=[k + 1,a_2,a_3,...]$ \cite{IsolaMaps}, hence, $F(x) \in {\cal I}_{k+1}$ . In general, composing such a function $n>0$ times, $F^{(n)}(x)\in {\cal I}_{k+n}$. Moreover, if a number $x$ is reached by the symbolic path $L^nR{\cal P}$, the number $F(x)$ has the symbolic path $L^{n+1}R{\cal P}$. Hence, the families depicted in Fig.\ref{fig:Arbol_coloreado} can be described as $\left\lbrace x, F(x), F^{(2)}(x),\dots,F^{(n)}(x),\dots \right\rbrace$.With a little abuse of notation, we can then write $F(T_n) = T_{n+1}$, i.e. the subtree $T_n$ is completely mapped into $T_{n+1}$ by the action of $F$ \cite{FareyTree}. \\
 
 \begin{figure}[h!]
\includegraphics[scale=0.5, trim=2cm 3cm 4cm 4cm,clip=true]{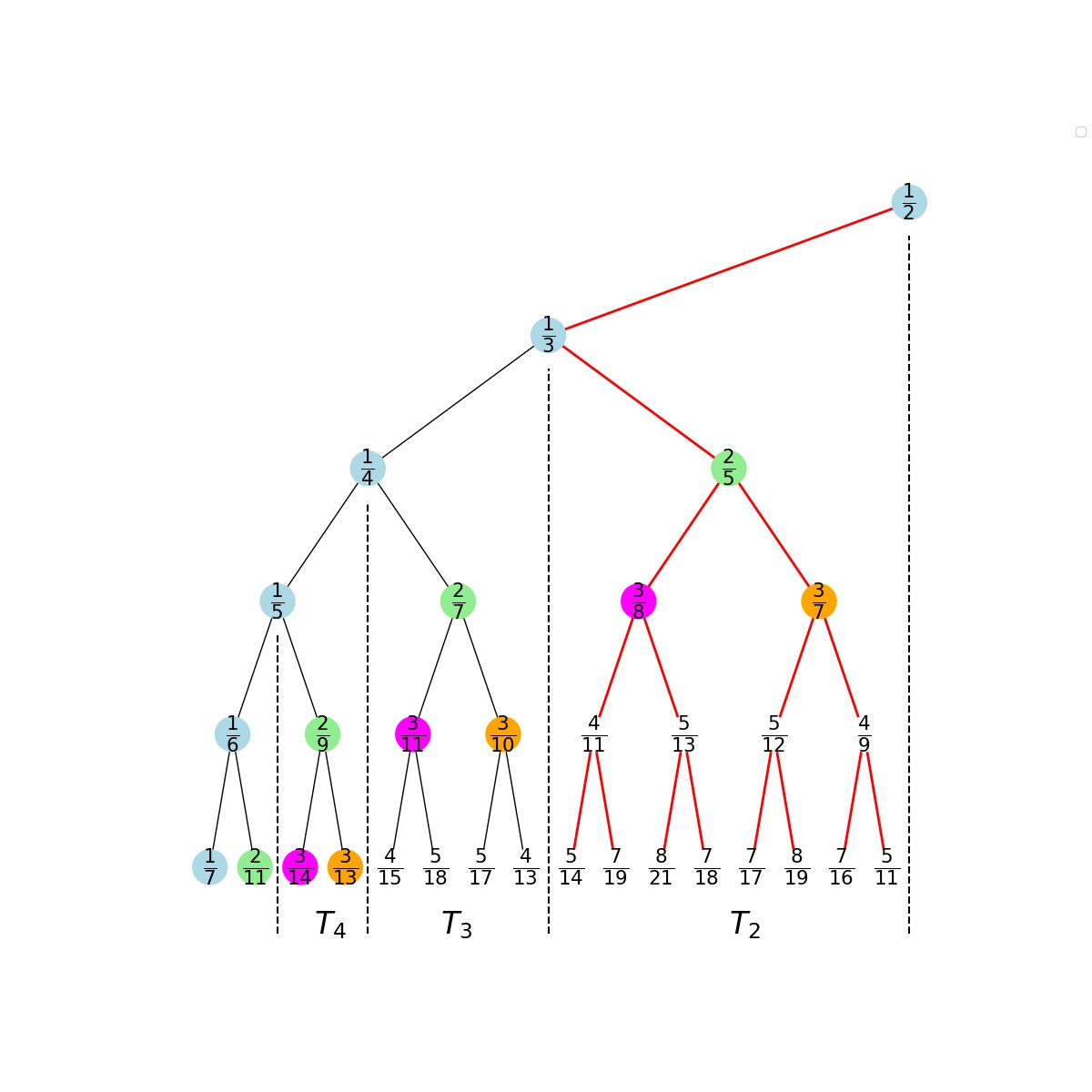}
\caption{Farey binary subtree starting at $1/3$. The coloured nodes are the dots in figures \ref{fig:Fig12_Entropy} and \ref{fig:Fig_entropiaSuelo}. The blue nodes are the extremes of the intervals $\mathcal{I}_n$. The green nodes are the roots of the subtrees restricted to two consecutive blue nodes. The magenta and orange dots form different rational families: in magenta the symbolic paths $L^nRL$ and in orange the paths $L^nR^2$. Equivalently, in continued fractions, the magenta dots are $[n,1,2]$ and the orange dots are $[n,3]$.  }
\label{fig:Arbol_coloreado}
\end{figure}

 To conclude, let us translate this discussion into Haros graphs. Consider the Haros graph $G_x$ associated with $x \in T_n$. It can be proved that if $G_x$
has $\alpha$ nodes of degree $k$, then the Haros graph $G_{F(x)}$, associated with $F(x) \in T_{n+1}$, has $\alpha$ nodes of connectivity $k + 1$. See Appendix \ref{appendix:C} for a complete proof of this fact. For instance, the Haros graph $G_{2/5}$, whose sequential connectivity is $[3,2,5,2,6]$, has a single node with connectivity $k = 5$, whereas the Haros graph $G_{F(2/5)} = G_{2/7}$ has a single node with connectivity $k + 1 = 6$  (see also Fig. \ref{fig:Familias_23/n} for a visual proof for the three first elements of the families depicted in green, magenta and orange). Altogether, these facts provide the following scaling equation:

\begin{equation}
P(k,x) = (1 + x)\cdot P\left(k+1, \frac{x}{1 + x}\right).  
\label{Pscaling1}
\end{equation}
In general, $\forall m \geq 1$:

\begin{equation}
P(k,x) = (1 + mx)\cdot P\left(k+m, \frac{x}{1 + mx}\right).  
\label{Pscaling}
\end{equation}

\begin{figure}[tbh!]
\includegraphics[width=0.7\columnwidth, trim=0cm 2cm 0cm 0cm,clip=true]{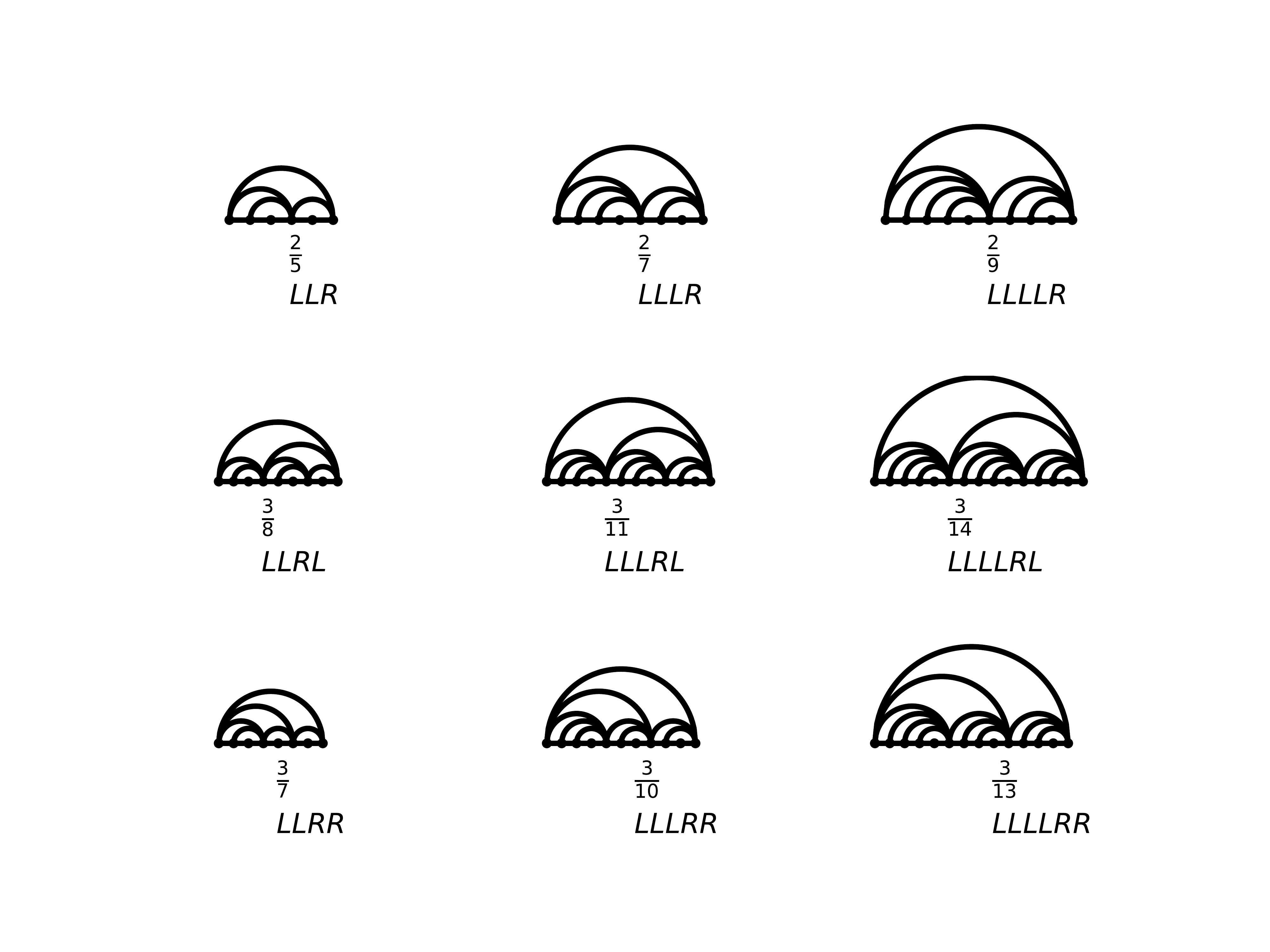}
\caption{First elements of the Haros graph families $G_{2/(2n+1)}$, $G_{3/(3n+2)}$ and $G_{3/(3n+1)}$, $n=2,3,4$. The graphs of each family are reached by symbolic paths of the shape $L^n R \mathcal{P}$. The connectivity structure is similar in each family: the number of nodes with degrees $k \geq 5$ is preserved, even though the degrees change. For example, the family $G_{2/(2n+1)}$ has two degrees $k = n+3$ and $k = n+4$.}
\label{fig:Familias_23/n}
\end{figure}

{The scaling property reported in Eq.\ref{Pscaling} will be exploited later when we explore the structure of the degree distribution entropy.}


\subsubsection{On the continuity of \texorpdfstring{$P(k,x)$}{Pkx}}

To conclude our study of the behaviour of $P(k,x)$, we focus on the continuity over the variable $x$, when $k$ is fixed. Consider again Figure \ref{fig:P5_aparicion}, where red dots illustrate how degree $k = 5$ `emerges' along the tree. Although $P(5,x)$ follows Eq.\ref{eqk5}, nothing is yet said about the boundaries $x=1/3,1/2,2/3$. First, notice in Fig. \ref{fig:Fig_xvsP(k)} that at $x=1/3$, we have $P(5,1/3) = 1/3$. Now, $G_{x\to 1/3-}$ is reached asymptotically through the paths $L^{3}R^n$, where the degree $k = 5$ does not appear by construction, and thus $\lim_{x\to 1/3^{-}} P(5,x) = 0$. Analogously, following the paths $L^2RL^n$ we have $\lim_{x\to 1/3^{+}} P(5,x) = 0$. We conclude that $P(5,1/3)$ constitutes a removable discontinuity. A similar phenomenon occurs at $x = 1/2$ and $x = 2/3$. 
A similar reasoning demonstrates the continuity of the degree distribution close to discontinuities in $x=\xi$: taking the sequence starting at $G_{p/q}$ and descending $LR^n$ or $RL^n$, we have $\lim_{x\to \xi^{-}} P(k,\xi) = \lim_{x\to \xi^{+}} P(k,\xi)$. In general, $P(k,x)$ has removable discontinuities at the rational numbers located at the level of the Farey binary tree where degree $k$ first appears ($\ell_{k-3}$) and at the level immediately above ($\ell_{k-2}$), and is continuous otherwise.

\subsection{Arithmetic and geometric mean degree: Khinchin constant}
Once we have established the properties of $P(k,x)$, in this section we further study two additional aspects of the degree sequence: its arithmetic mean degree ${\bar k}(x)$ and the geometric mean degree ${\bar k}_g(x)$. For a $G_x$ where $k_i$ correspond to the connectivity of the node $i$ we have:

\begin{eqnarray*}
\bar k(x) = \langle k\rangle=\sum_{k=2}^\infty kP(k,x)=\sum_{i=1}^q k_i/q \end{eqnarray*}
\begin{eqnarray*}
\bar k_g(x) = \exp(\langle \ln(k)\rangle)=\exp\bigg(\sum_{k=2}^\infty \ln k P(k,x)\bigg)=\bigg(\prod_{i=1}^q k_i\bigg)^{1/q}
\end{eqnarray*}

In his introduction to continued fractions, A. Ya. Khinchin \cite{Khinchin} proves that, for almost all real numbers, the set of coefficients $\{a_i\}$ of the continued fraction expansion of $x = [a_1, a_2, ... ]$ have a finite geometric mean independent of the value of $x$. Such a constant was coined as the Khinchin constant:

$$K_0 = \prod_{r=1}^{\infty} \left( 1 + \frac{1}{r(r+2)} \right)^{\log_{2} r} = 2.68545...$$. 

While this is true for almost all numbers, not much is known about specific numbers fulfilling the theorem, other than the fact that both rational and quadratic irrational numbers (with infinite periodic continued fractions) are sets of null measure that do not verify the result. On the other hand, it is also well known that the arithmetic mean of a number's continued fraction expansion diverges for almost all numbers in the unit interval.\\

\noindent  Here we study whether the structure of Haros graphs $G_x$ inherits such universality by interpreting the graph degree sequence $(k_1,k_2,...,k_q)$ as the graph analogue of the coefficients of the continued fraction expansion $[a_1, a_2, ... ]$ of $x$.
\begin{figure}[htb]
\includegraphics[width=0.8\columnwidth]{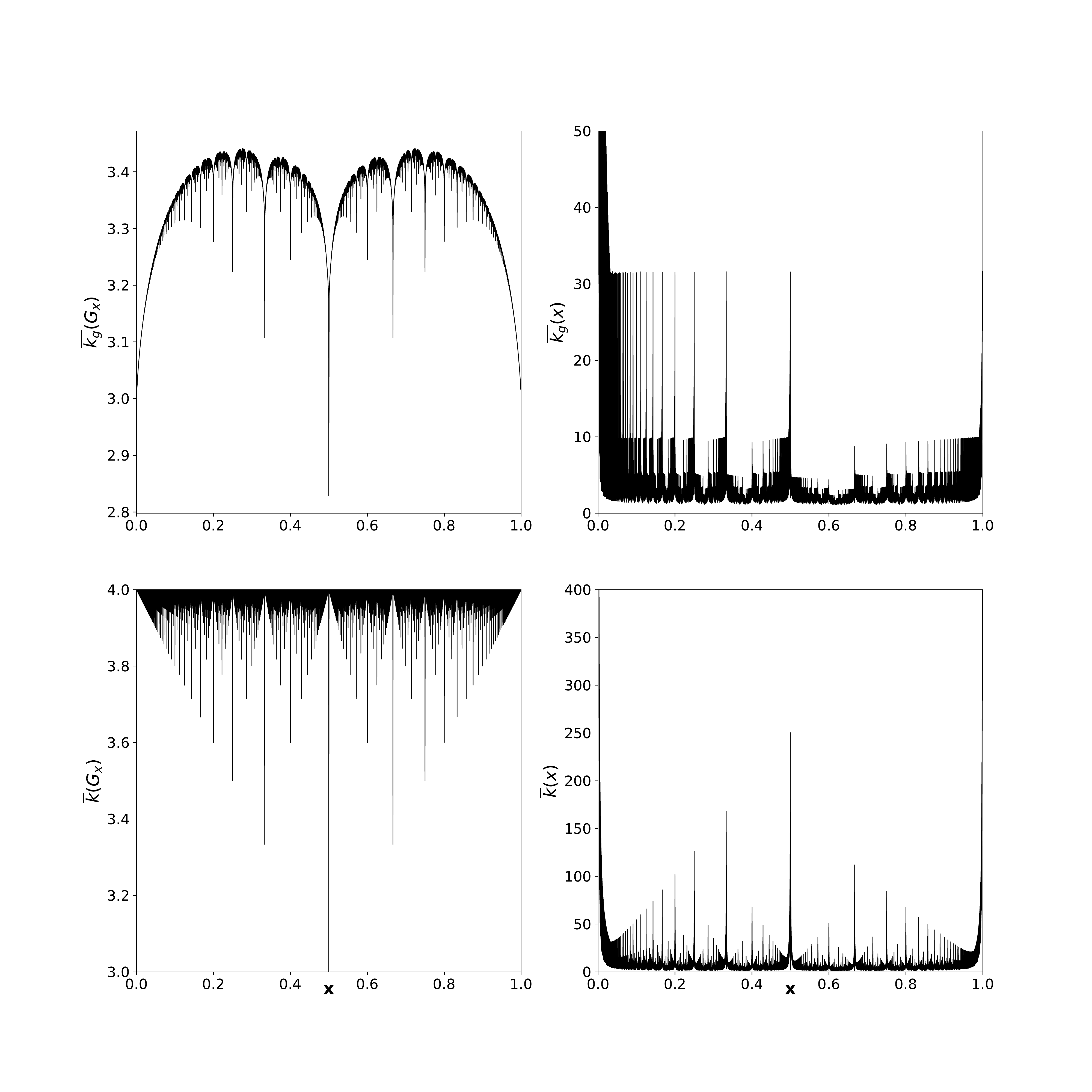}
\caption{(Upper panels) Geometric mean connectivity function $\overline{k}_{g}(G_x)$ computed for all Haros graphs $G_x$ with $x \in {\cal F}_{1000}$ (left) and geometric mean $\overline{k}_g(x)$ for continued fraction terms $x = [a_1, ... a_n], \forall x \in {\cal F}_{1000} $ (right). The upper right panel is unbounded when $x = 1/n \to 0$. (Bottom panels) Arithmetic mean connectivity of $G_x$ (left) and $x = [a_1, ... a_n]$ (right), for the same values of $x$. The asymmetric behaviour of the right panels occurs because the continued fraction expansions are different for elements $x$ and $1-x$.}
\label{fig:FigMeans}
\end{figure}
In the top panels of Fig. \ref{fig:FigMeans} we plot the geometric mean of the continued fraction expansion of $x$ for all $x\in {\cal F}_{1000}$ (right panel) and $\overline{k}_g(G_x)$, the geometric mean of the degree sequence Haros graph $G_x$, for the same set (left panel).
Notice that the former fluctuates around $K_0$, which is an indication that, while in rigour ${\cal F}_{1000}$ does not contain irrationals (and thus $K_0$ is not supposed to show up), the geometric mean of approximants to rational numbers seems to converge to $K_0$. On the other hand, the structure of such convergence is not clear, and there are no other noticeable patterns. The results for the Haros graphs reveal in turn a self-affine shape, similar to a Takagi curve \cite{Takagi}. This curve does not seem to concentrate its measure close to any particular constant, but shows a richer structure.\\
In the bottom panels of Fig. \ref{fig:FigMeans}, we then plot the arithmetic mean of the continued fraction expansion of $x$ for all $x\in {\cal F}_{1000}$ (right panel), and $\overline{k}(x)$, the arithmetic mean of the degree sequence of the Haros graph $G_x$, for the same set (left panel). The former does not show the exact mirror simmetry of the latter as the continued fraction expansion of $x$ and $1-x$ are different. Furthermore, the arithmetic mean of continued fraction expansion is known to diverge for almost all real numbers, however in the case of Haros graphs we find that their arithmetic mean accumulates at $\bar k=4$, with self-affine fluctuations. To make sense of this finding, we resort to a result in the HVG theory \cite{FeigenbaumGraphs, AnalyticalFeigenbaum} that states that an aperiodic HVG with $q$ nodes --and thus an Haros graph $G_{p/q}$-- has an arithmetic mean degree $\bar k = 4-2/q$. Since for irrationals $q\to \infty$, we then have
\begin{equation}
\overline{k}(x)=\left\{
\begin{array}{ll}
4 -\frac{2}{q},  & \text{if $x=p/q$ a reduced fraction}\\
4,               & \text{if $x$ irrational} \\
\end{array}%
\right.
\label{eq:arith}
\end{equation}
i.e. a Thomae's function that reflects the fractal structure of the partitions of the interval $[0, 1]$ that generate the successive Farey sequences \cite{Trifonov}.\\

To conclude, we found that the universality of Khinchin's constant is not found in Haros graphs --a self-affine structure with no clear concentration of the measure is found instead, pointing to a richer structure. Such universality is, however, found in the arithmetic mean, which concentrates at $\bar k_g=4$, a quantity that allows us to discriminate between rational and irrational numbers but fails to further distinguish e.g. between quadratic and non-quadratic irrationals as $K_0$ does.
In the next section, we proceed to construct yet another property of the degree sequence which we will show to possess a richer structure capable of classifying different families of rationals and irrationals.


\section{Entropy}

It is suggestive to explore the topological properties of the Haros graphs in relation to the real numbers to which they correspond. 
The previous section suggests that the structure of $G_x$ --in particular, its degree sequence-- is inheriting the properties of the number $x$ to which it corresponds, i.e., the properties associated with the path followed in the Farey binary tree. To further explore in more detail such structure, we propose to interpret an Haros graph's $G_x$ degree sequence as a `signal', and study its informational content by computing the entropy $S(x)$:
\begin{equation}
S(x)=-\sum_{k \geq 2} P(k,x)\log P(k,x).
\label{def:Def3}
\end{equation}
In information-theoretic terms, $S(x)$ is the block-1 approximant to the Shannon entropy of the signal \cite{entropy}. In graph-theoretic terms, it quantifies the heterogeneity of the graph degree distribution, that is, it measures the amount of disorder contained in the wiring architecture of $G_x$. Our contention is that the rich and intertwined structure of rational and irrational numbers can be unveiled by studying the structure of $S(x)$.\\

\noindent To give a flavour of the shape of $S(x)$, in Fig. \ref{fig:Fig12_Entropy} we show the results from a numerical computation for all elements of the Farey sequence  ${\cal F}_{1000}$, that includes all irreducible fractions $p/q$ with $q \geq 1000$. As $P(k,x) = P(k,1-x)$, the curve is symmetric around $x=1/2$ and $S(x) = S(1-x)$.
At first sight, the entropy curve seems self-affine. For instance, $S(x)$ over $(1/4, 1/3]$ seems to be a rescaled copy of the function at $(1/3, 1/2]$ and, in general, every interval $\mathcal{I}_n = \left(\frac{1}{n+1}, \frac{1}{n}\right]$, $n=2,3,\dots$ shows a similar shape. A numerical estimation of its box-counting dimension \cite{IGTorre} suggests $D_0 \approx 1.43$. In the following subsections, we dig deeper into the shape of $S(x)$ and aim to relate the structure with the properties of $x$.

\begin{figure}[tbh!]
\includegraphics[width=0.5\columnwidth, trim=0cm 1cm 0cm 2cm,clip=true]{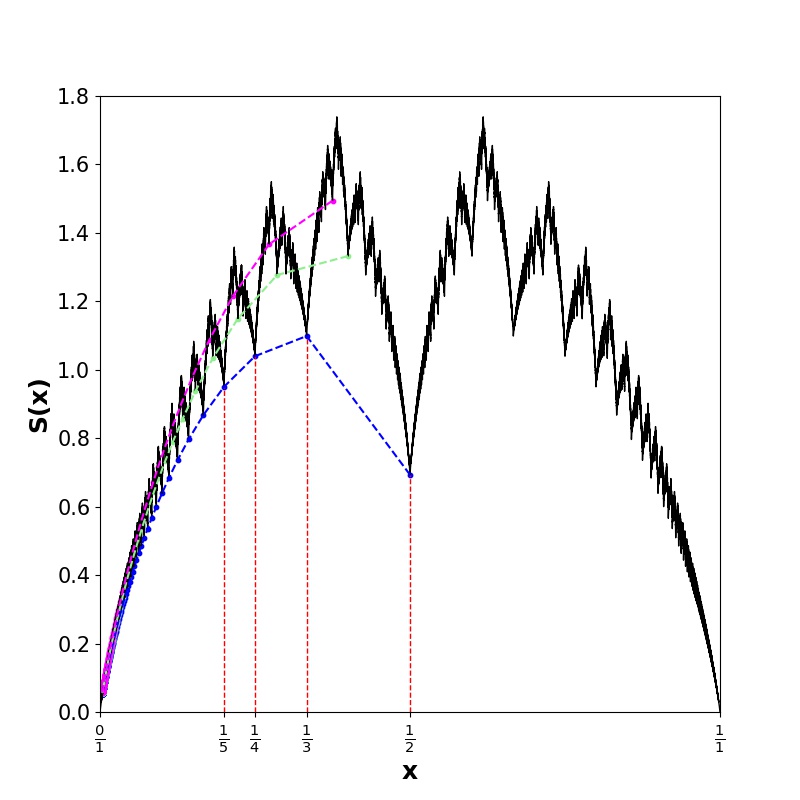}
\caption{Entropy function $S(x)$, computed for all Haros graphs $G_x$ with $x \in {\cal F}_{1000}$. Blue dots highlight the values $S(1/n)$ and define the intervals $\mathcal{I}_n = \left(\frac{1}{n+1}, \frac{1}{n}\right]$ for $n\geq 2$ (vertical red lines). $S(x)$ appears to have self-affine behaviour in every interval $\mathcal{I}_n$. Two examples of different families of rational numbers are represented in which each element appears in a single subinterval $\mathcal{I}_n$, for $n \geq 2$: green dots are the fractions $\frac{2}{2n + 1}$, and magenta dots are the fractions $\frac{3}{3n + 2}$.}
\label{fig:Fig12_Entropy}
\end{figure}



\subsection{Continuity of \texorpdfstring{$S(x)$}{Sx}}

Visually, it is not clear whether $S(x)$ is continuous on $[0,1]$. Observe that the arithmetic mean of the degree sequence is a Thomae's function (Eq. \ref{eq:arith}) and thus discontinuous at all rationals. This, together with the fact that $P(k,x)$ shows discontinuities, might give the false impression that $S(x)$ might also be discontinuous at all rational numbers. However, a subtler analysis shows that this is not the case. The main reason is based on the fact that the discontinuities found in $P(k,x)$ are systematically compensated for by the values of $P(k + 1,x)$. To showcase this, consider again Fig. \ref{fig:P234}: for $k=4$ and close to $x=1/2$ we have a discontinuity, where $P(4, 1/2) = 1/2$ and $\lim_{x\to 1/2^{-}} P(4,x) = \lim_{x\to 1/2^{+}} P(4,x) = 0$. This discontinuity is indeed `balanced-out' with the discontinuity at $k=5$ and close to $x=1/2$, where we have $P(5, 1/2) = 0$ and $\lim_{x\to 1/2^{-}} P(5,x) = \lim_{x\to 1/2^{+}} P(5,x) = 1/2$ (see Fig. \ref{fig:Fig_xvsP(k)}). Accordingly, $P(4,x)+P(5,x)$ is continuous at $x=1/2$, and so is $P(4,x)\log P(4,x) + P(5,x)\log P(4,5)$ (where we use the convention $0\log 0 =0$).
Similarly, close to any $x$ that shows a removable discontinuity in $P(k,x)$, such a discontinuity is exactly balanced with one at $P(k+1,x)$. When summing over $k$ in the computation of $S(x)$ these discontinuities are systematically removed, and the resulting function $S(x)$ is continuous.

\subsection{Disentangling Entropy: from rational Haros graphs being families of local minima to a generalised de Rham curve}
\label{disentan}

In this subsection, we shall produce two results: first, by exploring families of Haros graphs emerging naturally in the entropy function, we unveil a hierarchical structure of local minima associated with a partition of rational numbers into different families. Leveraging on some of the patterns that will emerge in the first part, the second objective of this subsection is to obtain a functional equation for the entropy function, which we will show takes the form of a generalised de Rham curve, i.e., certifying that $S(x)$ is indeed a self-affine function, continuous but not differentiable at any point.\\

\noindent Let us start by considering rationals of the form $x=1/n, \ n\in\mathbb{N}^+$. In Fig.\ref{fig:Familia_1n} we show the first Haros graphs $G_{1/n}$ for $n=2,3,4$, and their respective symbolic paths in the Farey binary tree.
Their degree distributions $P(k, 1/n)$ have the following expression for $n\geq 2$:
\begin{equation*}
P(k, 1/n)=\left\{
\begin{array}{ll}
1/n  & ,  k=2 \\
1-2/n& ,  k=3 \\
1/n  & ,  k=n + 2\\
0 &  , \text{otherwise}%
\end{array}%
\right.
\end{equation*} 

\begin{figure}[tbh]
\includegraphics[width=0.5\columnwidth, trim=0cm 2cm 0cm 0cm,clip=true]{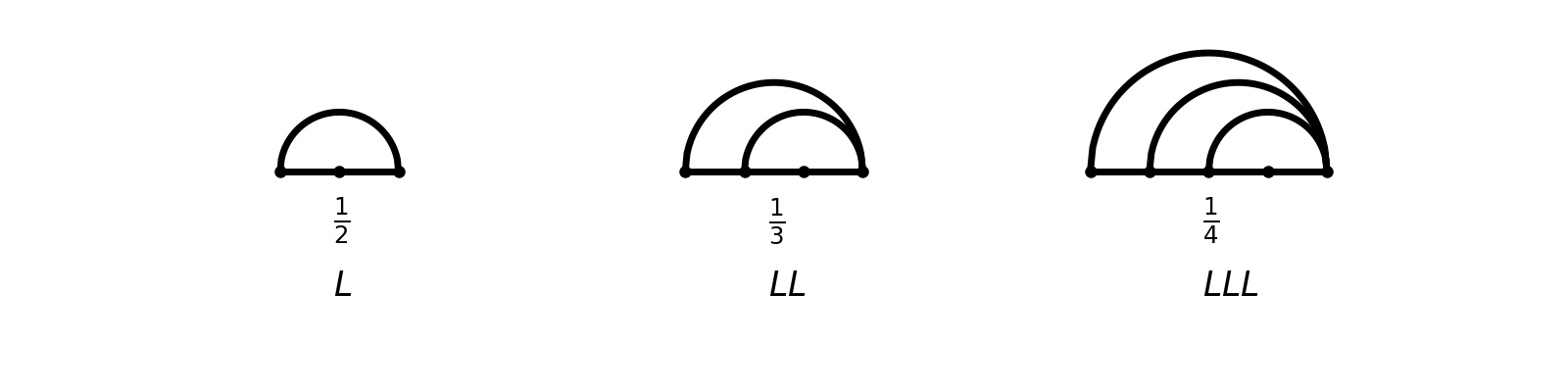}
\caption{The first three elements of the Haros graph family $G_{1/n}$. These Haros graphs, represented in blue in figures \ref{fig:Fig12_Entropy}, \ref{fig:Fig_entropiaSuelo} and \ref{fig:Arbol_coloreado}, are reached by the symbolic paths $L^{n-1}$. Each element of this family maintains the connectivity structure $[3,...,3, 2, n+2]$, where the number of nodes with connectivity $k =3$ increases as $n$ grows.}
\label{fig:Familia_1n}
\end{figure}
Accordingly, their graph entropy values are: $S(1/n) = - 2/n\cdot \log( 1/n ) -  (1-2/n)\cdot \log(1-2/n)$, and these values appear as local minima (blue dots in Fig.\ref{fig:Fig12_Entropy}). Taking advantage of the analytical expression, we can then define a reduced graph entropy $H(x)$ as follows:
\begin{equation}
H(x) \equiv\left\{
\begin{array}{ll}
S(x) + 2x\cdot \log(x) + (1-2x)\cdot \log(1-2x)  & , \; x \in [0,1/2] \\
S(x) + 2(1-x)\cdot \log(1-x) + (2x-1)\cdot \log(2x-1)  & , \; x \in [1/2,1]
\label{def:EntH}
\end{array}%
\right.
\end{equation} 
The reduced entropy preserves the original symmetry $H(x)=H(1-x)$, and is identically null for all rationals of the form $x=1/n, n>1$: see Fig.\ref{fig:Fig_entropiaSuelo} for a plot of $H(x)$, where $H(1/n)$ is highlighted as a dashed blue line connecting all rationals of the form $1/n$. In the same plot, other sets of minima of the reduced graph entropy seem to emerge naturally and connected by straight lines of different slopes, e.g. $G_{2/(2n+1)}$ (green dots connected by a green dashed line), $G_{3/(3n+2)}$ (magenta dots connected by a magenta dashed line), or $G_{3/(3n+1)}$ (orange dots connected by an orange dashed line). The first elements of these respective sets of Haros graphs, along with their respective symbolic sequence, are depicted in Fig. \ref{fig:Familias_23/n}. We now show that this is indeed the case and that such families are indeed aligned local minima of $H(x)$.  It can indeed be proved that, in the reduced entropy space, every such family can be interpolated by a straight line $H(x) = ax$ with a different slope $a$ (a proof is presented in Appendix \ref{appendix:D}).\\

\begin{figure}[htb]
\includegraphics[width=0.5\columnwidth, trim=0cm 0cm 0cm 2cm,clip=true]{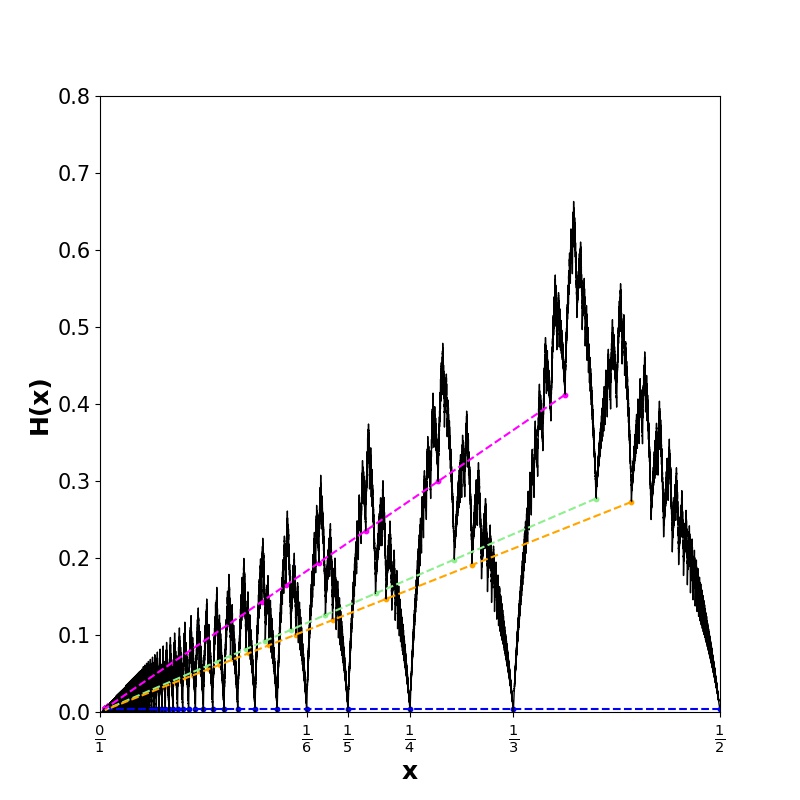}
\caption{Reduced entropy function $H(x) = S(x) + 2x\cdot \log(x) + (1-2x)\cdot \log(1-2x) $ for $x \geq 1/2$. Different families of rational numbers are highlighted: the blue dots are the fractions $1/n$ reached by the paths $L^{n-1}$; blue dashed line has a null slope. The green dots are the fractions $\frac{2}{2n + 1}$ reached by paths $L^{n}R$; the slope of the green line is $\log(2)$. Magenta dots are the fractions $\frac{3}{3n + 2}$ reached by paths $L^{n}RL$; the slope of the magenta line is $\log(3)$. The orange dots are the fractions $\frac{3}{3n + 1}$ reached by paths $L^{n}R^2$; the slope of the orange line is $\log(3) - \frac{2}{3}\cdot \log(2)$.}
\label{fig:Fig_entropiaSuelo}
\end{figure}

Furthermore, if we now define a reduced entropy density $H(x)/x$, the numbers families are now classified in constant levels of reduced entropy density, since $H(x)/x = H(F^{(m)}(x))/F^{(m)}(x)$, with $x\in {\cal I}_2$ and $F^{(m)}(x) \in {\cal I}_{m+2}$. Accordingly and with Eq. \ref{Pscaling}, we have that the reduced entropy fulfils a scaling equation: 
\begin{equation}
H(x) = (1 + mx)\cdot H\left(\frac{x}{1 + mx} \right).
\label{Hscaling}
\end{equation}
Note however that this scaling holds for $x<1/2$, and the scaling is indeed different if $x > 1/2$ as shown in Appendix \ref{appendix:C}. 
If we now apply the change of variable $z = \frac{1}{x} - 2$, in Eq. \ref{Hscaling}, the interval $[0,1/2]$ is deformed into $[0, \infty)$, and every rational number $x= 1/n$ is transformed into a natural number $z = n - 2$ for $n\geq 2$. Thus, the intervals $\mathcal{I}_n$ deform into intervals of equal size $[n-2, n -1)$, $H(z)/z$ becomes periodic (see Fig. \ref{fig:Fig_racniveles}) and we can write for $z \in [0, 1)$ and $m=0, 1, 2,...$: 

\begin{equation}
(z + 2) \cdot H \left( \frac{1}{z + 2} \right) = (z + m + 2) \cdot H \left( \frac{1}{z + m + 2} \right).
\end{equation}

Moreover, from Fig. \ref{fig:Eq_entropia}, we visually observe that $(z+2)H(1/(z+2)$ is exactly the graph entropy defined in Eq. \ref{def:Def3} for $z \in [1/2, 1]:$  

\begin{equation}
S(z) = (z + 2) \cdot H\left(\frac{1}{z + 2} \right).
\label{SHscaling}
\end{equation}
or, equivalently, by the definition of reduced entropy \ref{def:EntH}: 

\begin{equation}
S(z) = (z + 2) \cdot  S\left(\frac{1}{z + 2} \right)+z \log z -(z + 2)\log(z + 2),  \text{for } z \in [1/2, 1].
\end{equation}
A rigorous proof of this scaling equation is given in the proof of Theorem \ref{Tma12} in Appendix \ref{appendix:C}.\\

\begin{figure}[h!]
\includegraphics[scale=0.6]{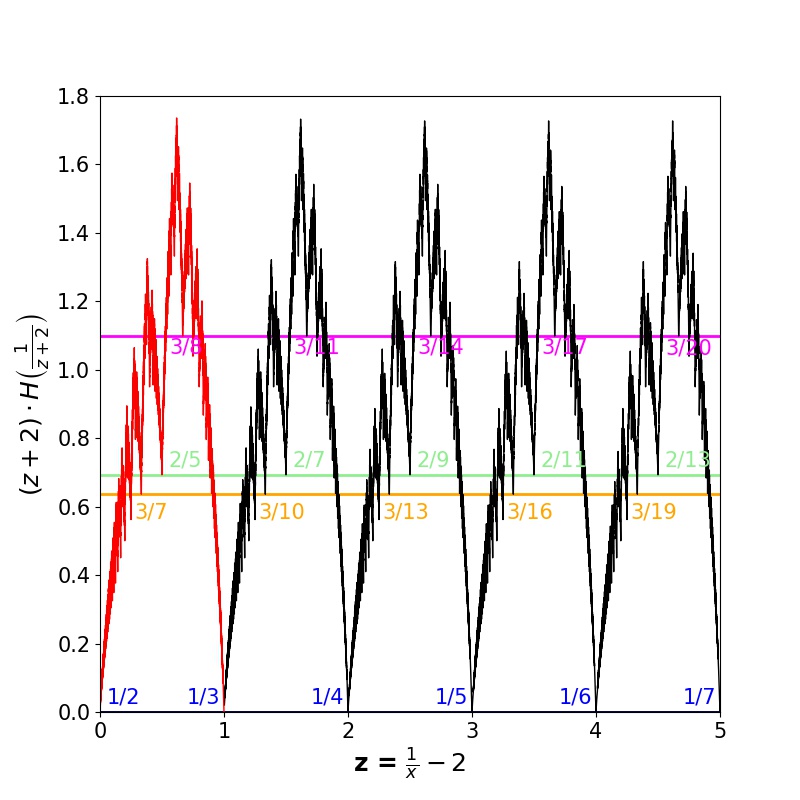}
\caption{Plot of $(z+2)\cdot H(1/(z+2))$. The change of variable $z = 1/x - 2$ transforms the interval $[0, 1/2]$ into $[0, \infty)$. Furthermore, the fractions $1/n$ are equispaced and we have a periodic pattern in the intervals $[n, n + 1]$. The interval $\mathcal{I}_2$ is highlighted in red in correspondence with the red-coloured subtree in Fig. \ref{fig:Arbol_coloreado}.}
\label{fig:Fig_racniveles}
\end{figure}

This equation has the form of a generalised Rham curve \cite{Takagi}, where the graph of $S\left(\frac{1}{z + 2} \right)$ in $[1/3,2/5]$ is a self-affine copy of $S(z)$ in $[1/2,1$] scaled by a factor $(z + 2)$ and displaced $z \log z -(z + 2)\log(z + 2)$. In Fig. \ref{fig:Eq_entropia}, we numerically illustrate this self-affinity. Observe that in the last step, the interval $[1/3, 2/5]$ is the interval $[1/2, 1]$ transformed by $1/(z + 2)$. As a family of de Rham curves, $S(x)$ is indeed a continuous function with no derivative at any point.\\

To conclude this section, we observe how Figure \ref{fig:Fig12_Entropy} supports visually the idea that the rational numbers form envelope curves under the graph entropy function. We sketch a proof of local minima as follows: Clearly we have that the global minima are reached in $x = 0,1$. The local maxima in a given interval will be reached at a point obtained as a finite composition of affine transformations $\frac{1}{z + 2}$, where $z$ are rational numbers, so the local minima will be rational numbers.

\begin{figure}[htbp]
\includegraphics[width=0.9\columnwidth]{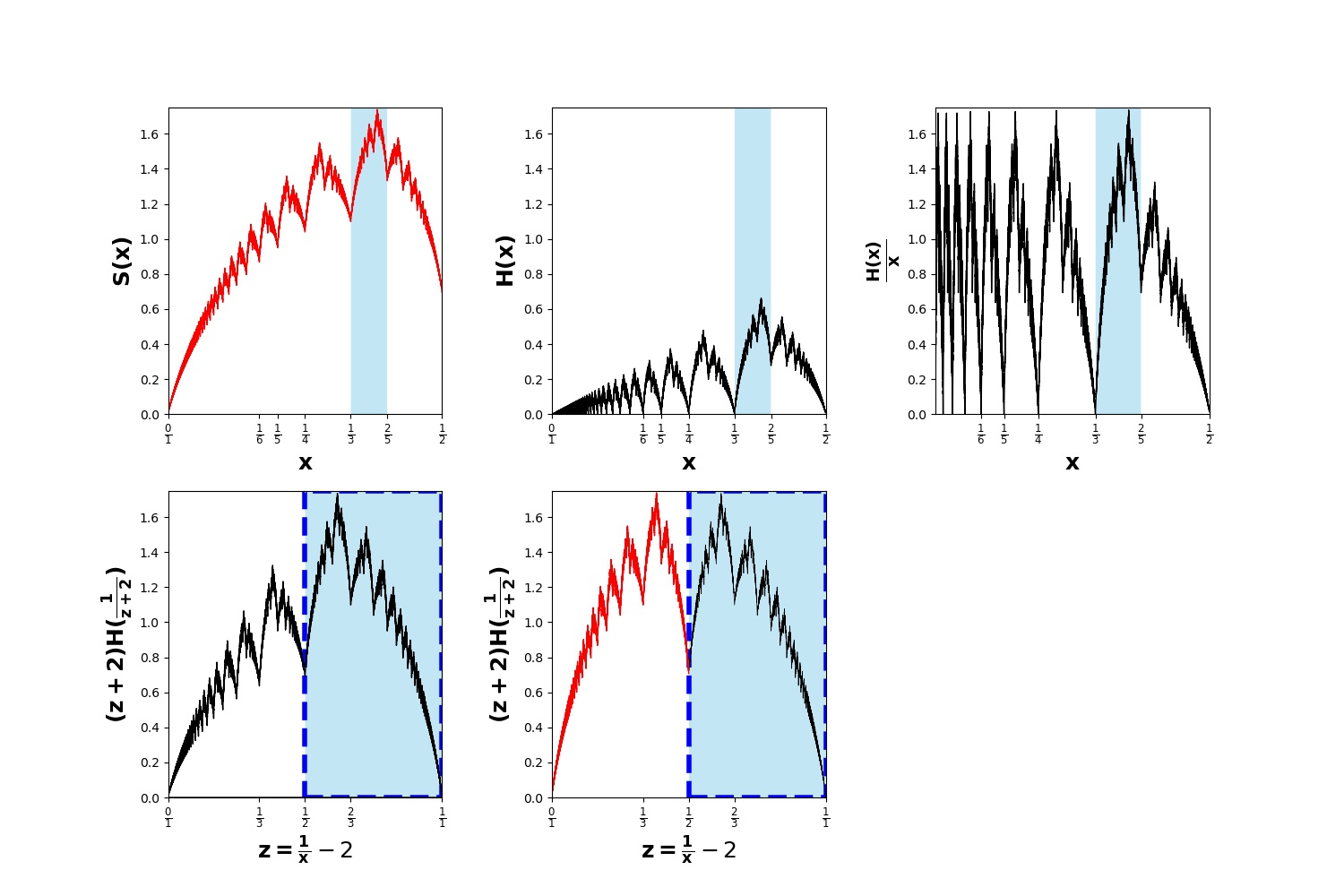}
\caption{Compositions of functions to visually describe the self-affine structure of $S(x)$. The blue shaded zone is the interval $[\frac{1}{3}, \frac{2}{5}]$. (Upper, left panel) Graph entropy $S(x)$.  (Upper, middle panel) The transformation $H(x) = S(x) + 2x\cdot \log(x) + (1-2x)\cdot \log(1-2x)$ removes the entropy of rational numbers ${1/n}$. (Upper, right panel) Transformation $H(x)/x$. (Bottom, left panel) Change of variable $z = 1/x - 2$. (Bottom, middle panel) The resulting function matches the original function $S(x)$ over $[1/2, 1]$.}
\label{fig:Eq_entropia}
\end{figure}

\subsection{Entropy maxima: from the Golden mean to noble numbers}

As we showed in the previous section, the entropy function reaches local minima at every rational number, and such minima are organised into families. Since the global minimum value of $S(x)$ is trivially reached for $x=0/1, 1/1$, where $S(0) = S(1) = 0$, the analysis of minima is concluded and now we turn to explore the structure of the maxima of $S(x)$.\\  

\subsubsection{Global maximum: \texorpdfstring{$\phi^{-1}$}{Phi} } 

Here we make use of technique of Lagrange multipliers to prove that $S(x)$ reaches its global maximum at $x= \phi^{-1}=0.618\dots$, and by symmetry at $x'=1- \phi^{-1}$, where $\phi=(1+\sqrt{5})/2=1.618033...$ is the so-called Golden number. To fix the constraints of the Lagrange multipliers technique, we consider again the degree distribution $P(k,x)$. 
Since almost all Haros graphs fulfil $P(2,x)=x, P(3,x)=1-2x$ and $P(4,x)=0$ (Eq.\ref{eq1}), normalisation implies ${\cal Q}_0 = \sum_{k=5}^{\infty} P(k,x) = 1 - x$. 
Second, observe that $S(x)$ reaches minima at the rationals, and thus the global maximum should take place at an irrational number. According to Eq.\ref{eq:arith}, all irrational Haros graphs have arithmetic mean degree $\overline{k} = 4$, hence the second constraint is ${\cal Q}_1 = \sum_{k=5}^{\infty} k\cdot P(k,x) = 5 - 4x$. Altogether, for irrational numbers, the following Lagrangian functional can be defined:
\begin{equation*}
\mathcal{L}[\left\lbrace P(k,x) \right\rbrace]=-\sum_{k=5}^{\infty}{P(k,x)\log{P(k,x)}}-(\lambda_0-1)\left(\sum_{k=5}^{\infty}{P(k,x)}-{\cal Q}_0 \right)
-\lambda_1\left(\sum_{k=5}^{\infty}{kP(k,x)}- {\cal Q}_1 \right)
\end{equation*}
which will take an extreme in the solution $\delta {\cal L}[P^*(k,x)]=0$ that also extremizes the entropy $S(x)$. One can prove that such extrema are reached for a specific shape of the degree distribution $P(k,x)$ (see Appendix \ref{appendix:E} for details):

\begin{equation}
P(k,x)=\left\{
\begin{array}{ll}
1-x & k=2 \\
2x-1 & k=3 \\
0 			& k=4\\
x^{k-1} & k\geq 5%
\end{array}%
\right.
\label{GoldenDistribution}
\end{equation}

Note that this technique does not determine the value of $x$ for us, only the shape of the degree distribution. But according to Eq.\ref{GoldenDistribution}, whatever $x$ is,  $G_x$  will necessarily contain nodes spanning all possible degrees (except $k=4$), i.e., the degree distribution does not have any holes.  According to the analysis in Section III, this implies that the symbolic path that identifies $G_x$ in the binary tree tree cannot have adjacent repeated symbols, i.e. it needs to follow an infinite zigzag, hence $x=\phi^{-1}$, the reciprocal of the Golden number, and by symmetry $1-x=1- \phi^{-1}$.\\


\noindent We can also independently determine $P(k,\phi^{-1})$ from the successive degree distributions of Haros graphs $G_{\mathfrak{F}_{n-1}/\mathfrak{F}_n}$ associated with rational convergents $\mathfrak{F}_{n-1}/\mathfrak{F}_n$ of $\phi^{-1}$ obtained via the Fibonacci sequence $(\mathfrak{F}_n)$, see Fig. \ref{fig:Ruta_aurea}. From a previous work \cite{QuasiperiodicGraphs}, we have:

\begin{equation}
P\left(k,\frac{\mathfrak{F}_{n-1}}{\mathfrak{F}_n}\right)=\left\{
\begin{array}{lll}
1 - \frac{\mathfrak{F}_{n-1}}{\mathfrak{F}_n} = \frac{\mathfrak{F}_{n-2}}{\mathfrak{F}_n} &  & k=2 \\
2\cdot \frac{\mathfrak{F}_{n-1}}{\mathfrak{F}_n} - 1 = \frac{\mathfrak{F}_{n-3}}{\mathfrak{F}_n} & & k=3 \\
0 			& & k=4\\
\frac{\mathfrak{F}_{n+k-1}}{\mathfrak{F}_n} & &5 \leq k \leq n+1%
\end{array}%
\right.
\end{equation}

Now, using Binet's formula and taking the limit $\lim_{n\to\infty} \mathfrak{F}_{n-1}/\mathfrak{F}_n = \phi^{-1}$, we recover Eq.\ref{GoldenDistribution} with $x=\phi^{-1}$.

\begin{figure}[htbp]
\includegraphics[width=0.45\columnwidth, trim=0cm 0cm 1cm 0cm,clip=true]{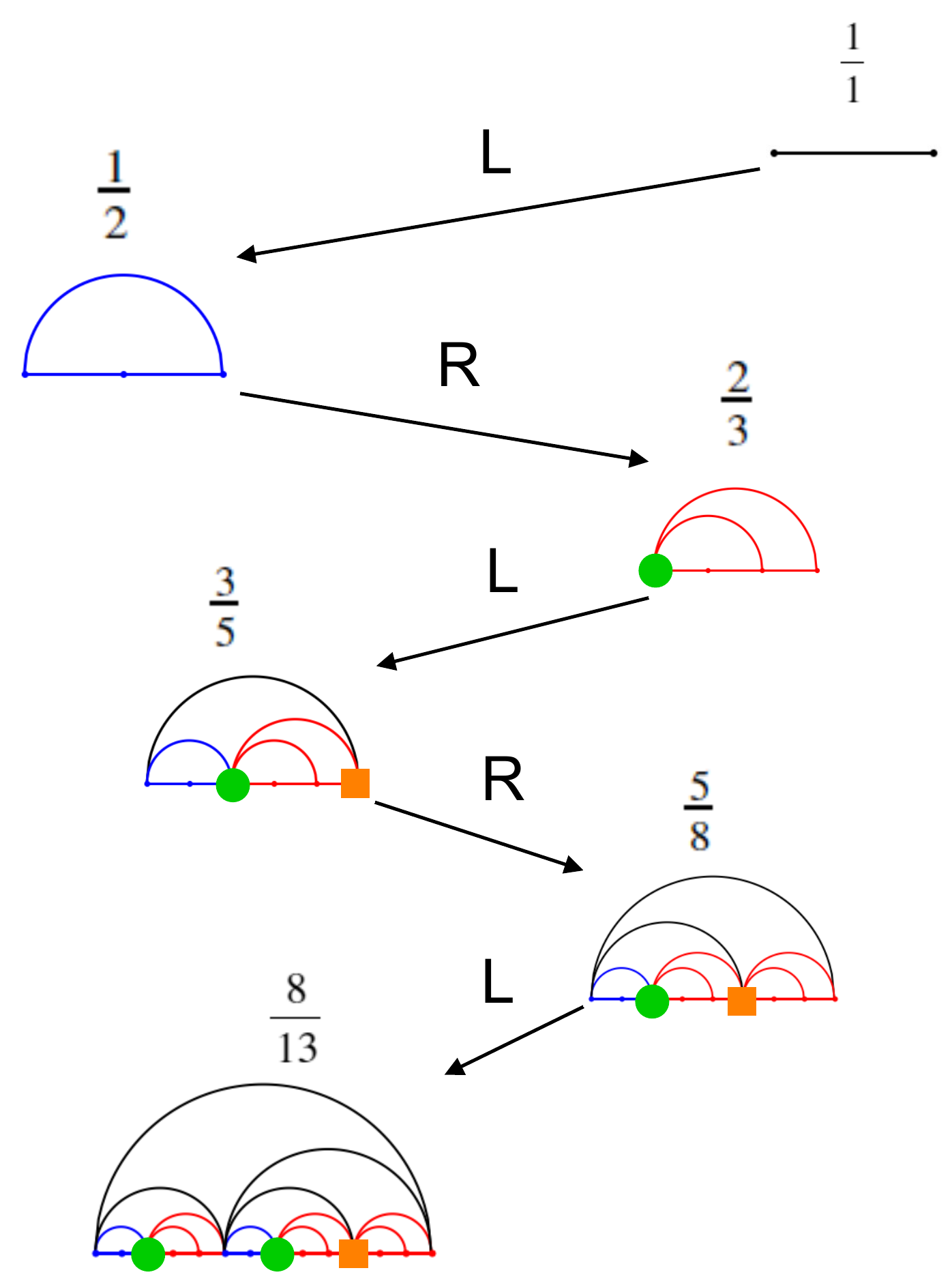}
\caption{We depict the first six steps of the path of the Haros graph tree to $G_{\phi^{-1}}$. The rational numbers $1/1, 1/2$, $2/3$, $3/5$, $5/8$ and $8/13$ are quotients $\frac{\mathfrak{F}_{n-1}}{\mathfrak{F}_{n}}$ of Fibonacci elements. The green points highlight the nodes with degree $k = 5$ and illustrate how the zigzag descent increase the degree frequency following a Fibonacci sequence, whereas the orange square points remark the nodes with degree $k = 6$. Due to lack of space, only six of these convergents are shown. However, it is easy to verify that the next convergent $8/13 \oplus 5/8 = 8/13 = 13/21$ has $3$ green nodes and $2$ orange square node, and, in general, the frequency of green and orange square nodes follow a Fibonacci sequence.
}
\label{fig:Ruta_aurea}
\end{figure}

If additional restrictions are imposed in the degree distribution, which Haros graphs will be maximally entropic accordingly? In what follows, we show that further restrictions can be parametrised by holes in the degree distribution, i.e. $P(\kappa,x)=0$ for $4\leq \kappa \leq f(n)$ (where $n$ denotes an ordinal).
We will prove that if the restrictions are a finite number of zeros of $P(k,x)$, the emergent entropy local maxima coincide with a family of affine transformations of the Golden number. 

\subsubsection{Local maxima }

We have proved that the entropy function $S(x)$ finds its global maximum at $x=\phi^{-1}$ (and by symmetry at $1-x=1-\phi^{-1}= 1/(2+\phi^{-1})$). Here, we further explore the hierarchy of local maxima that emerge in the rich structure of $S(x)$. For simplicity, from now on we focus only on $x \in [0,1/2]$, since the results are then extended by symmetry to $[0,1]$.
Now, let us again consider the graphic representation of $S(x)$, see Fig.\ref{fig:Entropia_nobles_suelo}. For $x \in [0,1/2]$, one can extract a set of local maxima's positions ${\cal M}_1$ that can be connected by a certain envelope curve ${\cal C}_1(n)$, highlighted by red dots connected by a red dashed line in \ref{fig:Entropia_nobles_suelo}). Numerically, ${\cal M}_1$ appears to coincide with the set of numbers $${\cal M}_1=\bigg \{\ \frac{1}{2+\phi^{-1}}, \frac{1}{3+\phi^{-1}}, \frac{1}{4+\phi^{-1}}, \frac{1}{5+\phi^{-1}},\dots \bigg \},$$
whose elements are naturally ordered by the discrete function ${\cal C}_1(n)$:
$${\cal C}_1(n)=\frac{1}{n+\phi^{-1}}, \ n\geq 2.$$

\begin{figure}[htbp]
\includegraphics[width=1.1\columnwidth, trim=4cm 0cm 0cm 0cm,clip=true]{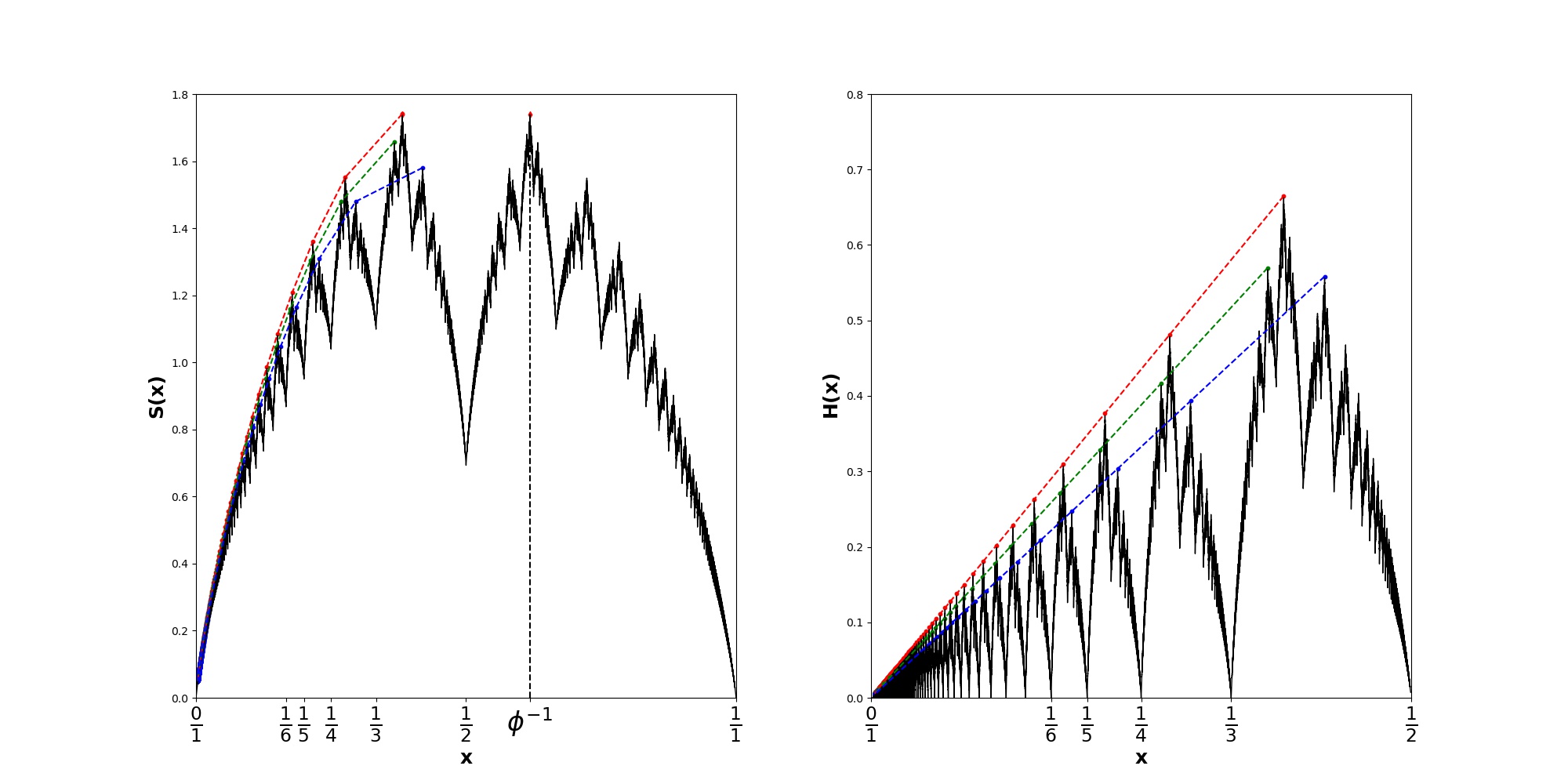}
\caption{ Entropy $S(x)$ (left panel) and reduced entropy $H(x)$ (right panel), computed for all Haros graphs $G_x$ with $x \in {\cal F}_{1000}$. In both panels, the red dots correspond to the local maxima ${\cal C}_1(n)=\frac{1}{n+\phi^{-1}}$ for $n \geq 2$. The global maxima are ${\cal C}_1(2) = 1 - \phi^{-1}$ and its mirror symmetry $\phi^{-1}$. Green and blue dots represent other families of noble numbers. The right panel helps to visualise how these families display a linearly increasing reduced entropy with a slope that depends on the family and where each element appears only once in each interval ${\cal I}_n$.}
\label{fig:Entropia_nobles_suelo}
\end{figure}

First, we study in some detail the elements of ${\cal M}_1$. Elements in this set are a subset of the so-called \textit{noble numbers}: irrationals whose continued fraction expansion eventually reaches an infinite sequence of $1$'s:  $[a_1, a_2, \dots,a_m ,1,1,1,\dots]$. We denote the initial sequence of symbols $[a_1, a_2, \dots, a_m]$ as the transient sequence. The reciprocal of the Golden number $\phi^{-1}$ is a very particular case of a noble number with no transient sequence ($m=0$). In particular, the elements in ${\cal M}_1$ (and ordered by ${\cal C}_1(n)$) have a transient sequence $[a_1]$, with $m=1$ and $a_1=n$.\\
The noble numbers can also be coded as an infinite path in the Farey binary tree, which, after a transient sequence of $L$'s and $R$'s, eventually reaches an infinite zigzag $(\text{LR})^{\infty}$.
For example, ${\cal C}_1(3)$  has a continued fraction expansion $[3,\overline{1}]$, i.e., its transient sequence therefore consists only of the number $3$, and its symbolic path is $L^2 (LR)^{\infty}$. The convergents in the transient are thus $0/1$ and $1/3$, and once the infinite zigzag sets in, the path continues and leads to the successive approximants $1/4, 2/7, 3/11, 5/18, ...$. Observe that this sequence of approximants can also be generated using suitable combinations of Fibonacci numbers, e.g. for this case 
\begin{equation}
    \bigg(\frac{1}{4}, \frac{2}{7}, \frac{3}{11}, \frac{5}{18}, ...\bigg) = \bigg(\frac{\mathfrak{F_{n}}}{ \mathfrak{F_{n-1}} + 3 \mathfrak{F_{n}}}\bigg)_{n\geq1},
    \label{eq:0113}
\end{equation}
where the right-hand side of Eq.\ref{eq:0113} is just the particular case of the general expression
\begin{equation}
    \bigg(\frac{A\mathfrak{F_{n-1}} + B\mathfrak{F_{n}}}{C\mathfrak{F_{n-1}} + D\mathfrak{F_{n}}}\bigg)_{n\geq1},
    \label{eq:abcd}
\end{equation}
where $A/C$ and $B/D$ are the last two convergents of the transient sequence, i.e. $A=0,\ C=1,\ B=1,\ D=3$ in our example $x={\cal C}_1(3)$.
Taking the limit $n\to \infty$ in the rhs of Eq.\ref{eq:0113}, we correctly recover the irrational number $\mathcal{C}_1(3) = \frac{1}{3 + \phi^{-1}}$. 
Likewise, taking the limits in Eq.\ref{eq:abcd} provides an expression for all noble numbers as $$\frac{A\phi^{-1} + B}{C\phi^{-1}+D}$$
where it is easy to see that for a general $\mathcal{C}_1(n)$, the convergents are $0/1$ and $1/n$ i.e. $A=0, \ B=1, \ C=1, \ D=n$.\\ 

Let us consider then the topological structure of Haros graphs $G_{{\cal C}_1(n)}$. Since noble numbers ${\cal C}_1(n)$  have paths in the Farey binary tree that start with a sequence of $n$ consecutive $L$ s, by theorem \ref{Tma:Rep}, it turns out that degrees $4 \leq k \leq n + 2$ do not appear in $G_{{\cal C}_1(n)}$. The degrees that do appear can be calculated using $F({\cal C}_{1}(n)) = {\cal C}_{1}(n+1)$, the expression of $P(k, \phi^{-1}) = P(k, 1 - \phi^{-1} = {\cal C}_{1}(2))$ and the scaling equation \ref{Pscaling}. In the same manner, taking as additional the restriction over the interval $[0, 1/n]$ into consideration and using similar techniques as those leading to Eq.\ref{GoldenDistribution}, after a bit of algebra and using Eqs. \ref{eq:0113} and  \ref{eq:abcd}, it can be proved that

\begin{equation}
P(k,{\cal C}_1(n))=\left\{
\begin{array}{ll}
{\cal C}_1(n) & k=2 \\
1 - 2{\cal C}_1(n) & k=3 \\
0 			&  4 \leq k \leq 2 + n\\
{\cal C}_1(n)(\phi^{-1})^{k-(n+1)} & k \geq 3 + n%
\end{array}%
\right.
\label{eq:Pk_C1}
\end{equation}

Interestingly, the degree distribution for all ${\cal C}_1(n)$ has a universal exponential tail with the same exponent, based on the Golden number. It turns out that in the intervals $[0,\frac{1}{n}]$, the graph entropy function $S(x)$ necessarily reaches its maximum precisely at $x = {\cal C}_{1}(n)$ (see Fig. \ref{fig:Entropia_nobles_suelo} for an illustration and Appendix \ref{appendix:F} for a proof). 
We can also prove that the reduced entropy $H(x)$ linearly interpolates the elements of ${\cal M}_1$, more concretely:
\begin{equation}
    H\left( {\cal C}_1(n) \right) = \left( -\log (\phi^{-1})\cdot(3 + \phi^{-1}) \right)\cdot {\cal C}_1(n) 
    \label{eq:H_C1}
\end{equation}

Hence, the slope of the red dashed line in Fig. \ref{fig:Entropia_nobles_suelo} is precisely $-\log (\phi^{-1})\cdot(3 + \phi^{-1})$ (See Appendix \ref{appendix:G} for more details).\\

\noindent One can now define the full set of Noble numbers via continued fractions as:

$${\cal C}_{m}(n_1, ... , n_m) = [n_1, n_2, ..., n_m, \overline{1}],$$
that is, these are numbers with an arbitrary transient, eventually followed by an infinite period-1 pattern (or, in terms of the binary sequence, followed by an infinite zigzag). Noble numbers are those quadratic irrationals which are quotients of affine functions of the Golden number, i.e.
$$
{\cal C}_{m}(n_1, ... , n_m) = \frac{p_m + \phi^{-1} p_{m-1} }{q_m + \phi^{-1} q_{m-1}},
$$
{where $p_i /q_i$ is the i-th convergent of the rational reached after the transient sequence $p/q = [n_1, ... , n_m]$.} In Fig.\ref{fig:Entropia_nobles_suelo} we highlight two specific families: ${\cal C}_{3}(n,1,2)$ (green line) and ${\cal C}_2(n,2)$ (blue line) (we remind that the red line interpolates ${\cal C}_1(n)$). For example, it can be proved (the proof is however cumbersome and not reported) that the degree distribution $P(k,x\in \mathcal{C}_{3}(n,1,2))$, with $x = \frac{3 + \phi^{-1}}{(3n + 2) + (n+1)\cdot \phi^{-1}}$ is:

\begin{equation}
\label{Eq:C3n12}
P\left(k,x\right)=\left\{
\begin{array}{ll}
x  & k=2 \\
1 - 2x & k=3 \\
0 &  4 \leq k \leq n + 2 \\
\frac{1}{3 + \phi^{-1}}\cdot x & k =  n+3 \\
\frac{1+ \phi^{-1}}{3+ \phi^{-1}}\cdot x  & k =  n+4 \\
0 & k = n + 5 \\
\frac{1}{3 + \phi^{-1}}\cdot x\cdot(\phi^{-1})^{k+n-4}  & k \geq  n+6 \\
\end{array}%
\right.  
\end{equation}%
In the same way, with the theoretical degree distribution, we can calculate the reduced entropy of $H(\mathcal{C}_{3}(n,1,2)) = \mathcal{C}_3(n,1,2) \cdot \left( \log(3 + \phi^{-1}) - \frac{1}{3+\phi^{-1}}\cdot \log(1-\phi^{-1}) \right)$. Again, the reduced entropy has the form $H(x) = a\cdot x$, as we have shown visually with the green dashed line in Fig.\ref{fig:Entropia_nobles_suelo}.\\
\noindent Interestingly, we can see that while the specific transient of the continued fraction has a specific echo in the first values of the degree distribution, its tail is still exponential with the same base as for the family ${\cal C}_1(n)$:
\begin{theorem}
Haros graphs $G_{x}$ where $x$ is an arbitrary noble number ${\cal C}_{m}(n_1, ... , n_m)$ have a degree distribution with an exponential tail, with the Golden number as a base.
\end{theorem}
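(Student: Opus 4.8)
The plan is to prove the precise quantitative statement behind the theorem: for every noble number $x={\cal C}_m(n_1,\dots,n_m)$ there exist a constant $C>0$ and an index $K$ such that $P(k,x)=C\,(\phi^{-1})^{k}$ for all $k\ge K$. This is exactly the behaviour displayed by the two worked families in Eqs.~\ref{eq:Pk_C1} and \ref{Eq:C3n12}, and ``exponential tail with the Golden number as a base'' is the assertion that it holds with the same base $\phi^{-1}$ irrespective of the transient $(n_1,\dots,n_m)$. I would argue by induction on the transient length $m$. The base case $m=0$ is $x=\phi^{-1}$, where Eq.~\ref{GoldenDistribution} gives $P(k,\phi^{-1})=(\phi^{-1})^{k-1}=\phi\,(\phi^{-1})^{k}$ for $k\ge5$, so $C=\phi$ and $K=5$; by the mirror symmetry $P(k,x)=P(k,1-x)$ the same tail holds for the reflected representative $1-\phi^{-1}=[2,\overline1]\in[0,1/2]$.

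For the inductive step I would realise the passage from a noble number with transient $(n_2,\dots,n_m)$ to one with transient $(n_1,n_2,\dots,n_m)$ as a finite composition of maps whose action on the degree distribution is already controlled. Writing $y=[n_2,\dots,n_m,\overline1]$ and $x=[n_1,y]=1/(n_1+y)$, one has $x=F^{(n_1-1)}\big(G(y)\big)$, where $F(x)=x/(1+x)$ is the map of Eq.~\ref{eq:F} that increments the leading partial quotient, and $G(y)=1/(1+y)=1-F(y)$ is the map that inserts a new leading $1$. The scaling equation (\ref{Pscaling1}) rearranges to $P(k,F(z))=\tfrac{1}{1+z}\,P(k-1,z)$, a pure degree shift by one together with a positive rescaling; the same holds for $G=1-F$ after invoking the reflection invariance of $P$. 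Hence each elementary map sends a distribution of the form $C(\phi^{-1})^{k}$ (valid for $k\ge K$) to $\tfrac{C}{(1+z)\phi^{-1}}(\phi^{-1})^{k}$ (valid for $k\ge K+1$): the base $\phi^{-1}$ is untouched, only the prefactor and the onset change. Since the transient is finite, only finitely many such maps are composed, so the final $C$ is positive and finite and the final $K$ is finite, which closes the induction. As an independent check, the same conclusion follows from the approximant picture: beyond the transient the symbolic path is the infinite zigzag $(\mathrm{LR})^{\infty}$, so the counts of nodes of successive degrees obey the Fibonacci recurrence exactly as in Fig.~\ref{fig:Ruta_aurea}, and Binet's formula turns this recurrence into the geometric tail with ratio $\phi^{-1}$, the transient merely fixing the initial data.

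The step I expect to be the main obstacle is the bookkeeping of the domain of the scaling equation. Equation \ref{Pscaling1} is stated for arguments in $[0,1/2]$, whereas the intermediate iterate $G(y)=1/(1+y)$ and any noble number whose leading partial quotient equals $1$ (which does occur, e.g. inside the family ${\cal C}_3(n,1,2)$) lie in $(1/2,1)$. Handling these cases requires the companion scaling for $x>1/2$ derived in Appendix \ref{appendix:C}, used in tandem with the reflection invariance $P(k,x)=P(k,1-x)$ so as to always return to a representative in $[0,1/2]$ before the next application of $F$; one must check that this reflection interleaves consistently with the $F$/$G$ word, equivalently that it respects the $L/R$ parity of the symbolic path. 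None of this affects the base of the tail, which remains $\phi^{-1}$ throughout, so the structure of the argument is robust and the care is entirely in matching prefactors and onsets across reflections.
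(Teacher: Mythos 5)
Your argument is correct, but its primary route differs from the one the paper sketches. The paper proves the theorem by following the zigzag descent directly: after the transient reaching $p/q$, each new boundary-node degree acquires a multiplicity that grows as a Fibonacci sequence while the graph sizes grow as $\mathfrak{F}_n q+\mathfrak{F}_{n-1}q_{m-1}$, so the tail is a ratio of Fibonacci-type quantities and Binet's formula delivers the base $\phi^{-1}$ (this is exactly what you relegate to your ``independent check''). Your main argument instead reduces every noble number to the base case $\phi^{-1}$ by induction on the transient length, writing $[n_1,y]=F^{(n_1-1)}(G(y))$ with $G(y)=1/(1+y)=1-F(y)$ and using Theorem \ref{Tma10} together with the mirror symmetry $P(k,x)=P(k,1-x)$ to show that each elementary map is a degree shift by one plus a positive rescaling, hence preserves the base of any geometric tail. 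This is a genuinely different decomposition, and it buys something: each step is an already-proven lemma (Theorem \ref{Tma10}, Corollary \ref{cor:12}), so the induction is arguably tighter than the paper's sketch, and it makes transparent why the transient only affects the prefactor and the onset $K$ while the base is forced by the terminal $\overline{1}$ block. What the paper's route buys in exchange is an explicit closed form for the tail in terms of $q$ and $q_{m-1}$, and a structural explanation of where the Fibonacci multiplicities come from (Fig. \ref{fig:Ruta_aurea}). Your worry about the domain of Eq. \ref{Pscaling1} is appropriately flagged but is in fact harmless: Theorem \ref{Tma10} is proven in Appendix \ref{appendix:C} for arbitrary $p/q$, the image $F(x)$ always lands in $[0,1/2)$, and the only case distinction ($x$ above or below $1/2$) affects $P(5,F(x))$, not the tail. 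The one residual gap --- shared with the paper itself --- is that the scaling lemmas are established for finite rational Haros graphs, so strictly one should justify passing to the irrational limit along the convergents before asserting the tail formula for $G_x$ with $x$ irrational.
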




Here we report a sketch of the proof: The degree distribution $P(k, x = {\cal C}_{m}(n_1, ... , n_m))$ is constructed in a zigzag process starting at the degree distribution of $p/q$. After the transient sequence, the infinite zigzag reaches the values:

$$\frac{p + p_{m-1}}{q + q_{m-1}}, \frac{2p + p_{m-1}}{2q + q_{m-1}}, \frac{3p + 2p_{m-1}}{3q + 2q_{m-1}} \to \frac{p + \phi^{-1} p_{m-1} }{q + \phi^{-1} q_{m-1}},$$

where $p_{m-1}/q_{m-1}$ is the $m-1$ convergent of $p/q$. At this point, the generation of new degrees follows the process illustrated by the green and orange points in Fig. \ref{fig:Ruta_aurea}. In each descent, a new degree appears at the boundary node, which becomes an inner node in the next two descents. After that, its value increases as a Fibonacci sequence: $2, 3, 5, 8, 13, ...$, whereas the denominators increase as $q + q_{m-1}, 2q + q_{m-1},3q + 2q_{m-1}, ... $ Hence, the tail of the degree distribution of the noble number will have follow:
$$
\frac{\mathfrak{F}_{n - (k - \alpha)}}{\mathfrak{F}_n \cdot q + \mathfrak{F}_{n-1}\cdot q_{m-1}},
$$
where the value $\alpha$ depends on the concrete noble number $x$. Using Binet's formula, from the last expression we easily obtain an exponential tail, where the base is indeed the reciprocal of the Golden number (see Fig. \ref{fig:Fig_log_dist}).\\

To recap, we have proved the Haros graphs associated with Noble numbers --a subset of the family of quadratic irrationals-- have a degree distribution with an exponential tail of base $\phi^{-1}$. This result has been obtained via an entropic-maximization process, where the infinite zigzag period-1 expansion $[\bar 1]$ of Noble numbers plays an important role. The question arises naturally: What happens if we then consider a periodic patterns of larger period $\geq 2$? As a matter of fact, the so-called family of Metallic ratios $\phi_{b}^{-1}$ are the positive solutions of the equation $x^2 +bx -1 = 0$, and their continued fraction expression is $[\overline{b}]$, i.e., metallic ratios have a period-$b$ pattern. For example, the reciprocal of the Golden number is the Metallic ratio for $b=1$, while $\sqrt{2} -1$ is the so-called Silver ratio, or Metallic ratio for $b = 2$. From \cite{QuasiperiodicGraphs}, the theoretical expression of $P(k,\phi_{b}^{-1})$ is:  

\begin{equation}
P(k,\phi_{b}^{-1})=\left\{
\begin{array}{ll}
\phi_{b}^{-1} & k=2 \\
1 - 2\phi_{b}^{-1} & k=3 \\
0 			&  k = 4\\
(1 - \phi_{b}^{-1})\cdot (\phi_{b}^{-1})^{\frac{k - 3}{b}}& k = bn + 3, n\geq 1 \\
0 & \text{otherwise}
\end{array}%
\right.
\label{eq:Metallic_ratio}
\end{equation}

The proof of Eq.\ref{eq:Metallic_ratio} is similar to others presented in this paper and proceeds to use Lagrange multiplier techniques with the constraints $P(k,x) = 0$ for values $k \neq bn + 3$, i.e., we search for local maxima establishing an infinite number of periodic restrictions. We conclude that Haros graphs associated with metallic ratios have a degree distribution with a periodic pattern of zeros and an exponential tail with base the metallic ratio $\phi_{b}^{-1}$, and are also local maxima of $S(x)$.\\

\noindent We close this section by further generalizing our previous results: we can define the full set of generalised metallic ratios (note that this family contains all Noble numbers) as
$${\cal C}_{b,m}(n_1, ... , n_m) = [n_1, n_2, ..., n_m, \overline{b}],$$
We can state the following theorem (the proof is cumberstone and is not presented, but its essence is the same as that seen in previous theorems):

\begin{theorem}
 Haros graphs associated with generalised metallic ratios ${\cal C}_{b,m}(n_1, ... , n_m)$ have a degree distribution with a periodic pattern of zeros and an exponential envelope with a base $\phi_{b}^{-1}$, and are thus also local maxima of $S(x)$.
\end{theorem}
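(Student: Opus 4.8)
The plan is to reduce all three assertions---periodic zeros, exponential envelope of base $\phi_b^{-1}$, and local maximality of $S$---to the pure metallic case already settled in Eq.~\ref{eq:Metallic_ratio}, by exploiting that a finite transient acts on Haros graphs through controlled, base-preserving transformations. For the \emph{periodic pattern of zeros}, note that ${\cal C}_{b,m}(n_1,\dots,n_m)=[n_1,\dots,n_m,\overline{b}]$ has a symbolic path consisting of a finite transient $L^{n_1}R^{n_2}\cdots$ followed by the eventually periodic word $(L^{b}R^{b})^{\infty}$. By Theorem~\ref{Tma:Rep}, $P(\kappa,x)=0$ exactly when position $\kappa-2$ of this path repeats a symbol; inside each block of $b$ equal symbols every internal transition is a repetition, whereas the two turning points per period $2b$ are not. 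Hence, once $\kappa-2$ lies beyond the transient, $P(\cdot,x)$ vanishes except on an arithmetic progression of step $b$, i.e.\ a periodic pattern of zeros whose phase is shifted by a bounded amount determined by the transient.

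For the \emph{exponential envelope} I would reduce to $\phi_b^{-1}={\cal C}_{b,0}$. Building the transient amounts to prepending continued-fraction digits, $x\mapsto 1/(n_i+x)$, and each such map factors as a composition of the scaling map $F$ of Eqs.~\ref{eq:F}--\ref{Pscaling} (which increments a degree by one and leaves ratios of successive values unchanged) and the mirror symmetry $x\mapsto 1-x$ (under which $P(k,x)=P(k,1-x)$). Both operations rescale $P$ by a positive factor and relabel degrees by a fixed additive shift, as exhibited explicitly for the period-$1$ families in Eqs.~\ref{eq:Pk_C1} and \ref{Eq:C3n12}; therefore the geometric decay ratio between consecutive nonzero degrees is preserved. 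Since Eq.~\ref{eq:Metallic_ratio} already gives that ratio to be $\phi_b^{-1}$ for $\phi_b^{-1}$ itself, the same base persists for ${\cal C}_{b,m}$. Equivalently, the tail values are generated by the period-$b$ zigzag recurrence with characteristic equation $t^{2}-bt-1=0$ and dominant eigenvalue $\phi_b$, so Binet's formula yields an envelope of base $\phi_b^{-1}$, the transient entering only through the constants of the period-$b$ analogue of Eq.~\ref{eq:abcd}.

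For \emph{local maximality} I would mirror the Lagrange-multiplier arguments behind Eqs.~\ref{GoldenDistribution}, \ref{eq:Pk_C1} and \ref{eq:Metallic_ratio}. On the subinterval of $[0,1]$ whose paths share the transient $[n_1,\dots,n_m]$ and the period-$b$ holes, the admissible degree distributions are those with the prescribed zeros, with head values fixed by Eq.~\ref{eq1}, normalization $\sum_k P(k,x)$ fixed, and arithmetic mean degree $\bar k=4$ (Eq.~\ref{eq:arith}). Maximizing $S(x)=-\sum_k P\log P$ (Eq.~\ref{def:Def3}) under these constraints yields a stationary distribution that is exponential along the admissible turning-point degrees, and the mean-degree constraint forces its decay ratio to the self-consistent value $1/(b+\phi_b^{-1})=\phi_b^{-1}$ that defines the metallic ratio. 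This coincides with the distribution from the first two steps, so the unique constrained maximizer is $x={\cal C}_{b,m}(n_1,\dots,n_m)$; self-affinity of $S$ (Eqs.~\ref{Hscaling}--\ref{SHscaling}), together with the fact that the transient maps carry entropy maxima to entropy maxima, promotes this to a genuine local maximum of the unrestricted curve.

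The hard part will be the transient bookkeeping inside the maximization. The finitely many anomalous head values produced before the periodic regime sets in (compare the irregular top of Eq.~\ref{Eq:C3n12}) must be shown neither to shift the decay base away from $\phi_b^{-1}$ nor to displace the maximizer, and one must exclude competing distributions with the same holes but higher entropy. This is exactly the step the authors call cumbersome: it is routine in spirit but requires carefully tracking the transient convergents $p_m/q_m$ and $p_{m-1}/q_{m-1}$ through the metallic recurrence and verifying that the Lagrange stationarity conditions still single out $\phi_b^{-1}$ rather than a transient-dependent ratio.
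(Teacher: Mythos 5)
Your plan follows essentially the same route the paper itself indicates: periodic holes from Theorem~\ref{Tma:Rep} applied to the eventually periodic word $(L^bR^b)^\infty$, the exponential tail of base $\phi_b^{-1}$ from the period-$b$ zigzag recurrence with characteristic equation $t^2-bt-1=0$ (the metallic analogue of the Binet argument sketched for noble numbers), and local maximality via Lagrange multipliers with the hole pattern imposed as constraints, as in Eq.~\ref{eq:Metallic_ratio} and Appendices~\ref{appendix:E}--\ref{appendix:F}. Note that the paper explicitly declines to write this proof out (calling it cumbersome but of the same essence as the preceding theorems), so both you and the authors leave the same transient-bookkeeping step unexecuted; your sketch correctly identifies that step as the remaining work.
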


\begin{figure}[h!]
\includegraphics[scale=0.6]{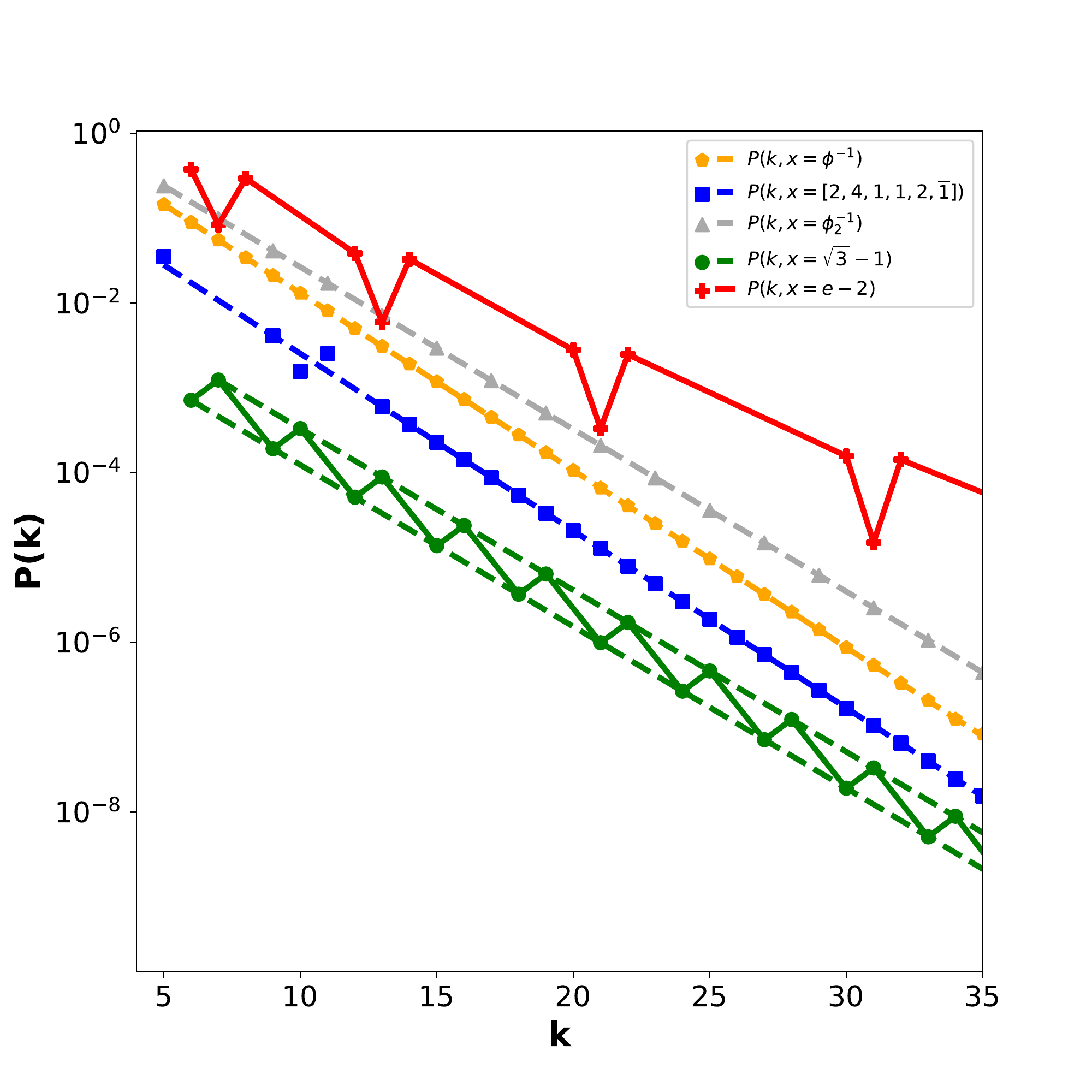}
\caption{Semi-log plot of the numerical degree distributions $P(k,x)$ for values $5\leq k \leq 30$ of Haros graphs associated with irrational numbers of different families: the reciprocal of the Golden number (golden circles), the general noble number $[0; 2, 4, 1, 1, 2, 2, \overline{1}]$ (blue squares), the silver mean or metallic ratio with $b = 2$, $\sqrt{3} - 1 = [\overline{1,2}]$,  a quadratic irrational with continued fraction of period greater than $1$, and the number $e$, a non-algebraic irrational. The dots represent the numerical $P(k,x)$ obtained by the large convergents of each irrational number. The dashed lines correspond to theoretical tails of the degree distribution of each quadratic irrational. In the case of $\sqrt{3}-1$, the degrees $k_1 = 3n +1$ and $k_2 = 3n+2$ form two parallel exponential tails. It can be observed that the tails of the degree distribution of the reciprocal of Golden number and the noble numbers are parallels and their slope matches the the reciprocal of the Golden number.}
\label{fig:Fig_log_dist}
\end{figure}

\section{Discussion and Open Problems}
Let us first summarise the paper's highlights. In this work, we have introduced Haros graphs $G_x$, that provide a graph-theoretic representation of real numbers $x\in[0,1]$. The structure of the Farey binary tree is replicated using the set of Haros graphs and a concatenation operator that plays the role of the mediant sum operator in graph space. The degree sequence of Haros graphs encapsulates rich information of the number they represent. We have provided analytical results on the degree distribution $P(k,x)$ of the Haros graph $G_x$ associated to $x\in[0,1]$ and provide insight into such topological structure in relation to the continued fraction representation of $x$. We have further explored the arithmetic and geometric mean degree of Haros graphs for both rational and irrational numbers, revealing an intricate fractal structure which is absent in the continued fraction expansion. \\
In a second step, we have defined a graph entropy $S(x)$ in terms of the entropy over the degree distribution of the Haros graph $G_x$. We have proved that such an entropy is a self-affine function, which can be expressed in terms of a generalised de Rham curve. The local minima of such a function relate to rational numbers, whereas the local maxima are related to specific families of irrationals.
We have proved that the subset of noble numbers ${\cal C}_1(n)$ are local maxima of $S(x)$ and have an Haros graph whose degree distribution has an exponential tail with base equal to the reciprocal of the Golden number, and additional analytical and numerical evidence supports the fact that this is also the case for the whole set of noble numbers ${\cal C}_{m}(n_1,..., n_m)$.
Moreover, the Noble numbers and metallic ratios are also local maxima of $S(x)$, and their Haros graphs have a degree distribution with an exponential tail with base the reciprocal of the Golden number or metallic ratio, respectively. In other words, the infinite zigzag period-$b$ expansion $[\overline{b}]$ that emerges after the transient of the continued fraction expansion of any Noble number or metallic ratio is mapped into an Haros graph with a degree distribution with an exponential tail of base $\phi_{b}^{-1}$. Observe that Noble numbers and metallic ratios are subsets of quadratic irrationals, this larger set being formed by numbers whose continued fraction --after a generic transient-- reaches a generic periodic pattern of period $b$ (Nobles have period 1 pattern). At this point we can state the following conjectures, that will form the basis of future work:


\begin{conjecture}
 All quadratic irrationals have associated Haros graphs with a periodic pattern of zeros in the degree distribution and an exponential envelope in the tail whose base is related to the periodic pattern of zeros (i.e., to the periodic pattern of the continued fraction expansion).
\end{conjecture}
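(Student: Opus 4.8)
The plan is to build on the Lagrange--Galois characterisation already invoked in the text: $x$ is a quadratic irrational if and only if its continued fraction is eventually periodic, $x=[a_1,\dots,a_m,\overline{b_1,\dots,b_p}]$. The first step is to reduce to the purely periodic case $[\overline{b_1,\dots,b_p}]$. The transient $[a_1,\dots,a_m]$ is a finite symbolic prefix, so by Theorem \ref{Tma:Rep} it can only create finitely many holes, and by iterating the scaling relation Eq.~\ref{Pscaling}, $P(k,x)=(1+mx)\,P\!\left(k+m,\tfrac{x}{1+mx}\right)$ (together with its $x>1/2$ counterpart treated in Appendix \ref{appendix:C}), its effect on the degree distribution is a finite low-degree `echo' followed by a finite rescaling. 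Since neither the \emph{eventual} hole pattern nor the \emph{asymptotic} decay base is altered by a finite prefix --- exactly as observed for the transient of noble numbers in Eq.~\ref{Eq:C3n12} --- I would henceforth take $x=[\overline{b_1,\dots,b_p}]$.

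The periodic pattern of zeros then follows directly from Theorem \ref{Tma:Rep}. The symbolic path of $[\overline{b_1,\dots,b_p}]$ is the word $L^{b_1}R^{b_2}L^{b_3}\cdots$ whose run lengths are the periodic sequence $b_1,\dots,b_p,b_1,\dots$. A repeated symbol $LL$ or $RR$ sits precisely at the interior positions of any run of length $\ge 2$, and Theorem \ref{Tma:Rep} places a hole $P(\kappa,x)=0$ exactly where such a repetition occurs at position $\kappa-2$. Because the run-length sequence has period $p$, the cumulative positions of the interior repetitions are invariant under a shift by the symbolic period $L_p:=\sum_{i=1}^p b_i$; hence the hole set is eventually periodic with period $L_p$, establishing the periodic pattern of zeros.

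For the exponential envelope I would generalise the Fibonacci-type zigzag argument already used for the noble and metallic families (Eqs.~\ref{eq:Pk_C1}, \ref{eq:Metallic_ratio} and Fig.~\ref{fig:Ruta_aurea}). The convergents $p_n/q_n$ of $[\overline{b_1,\dots,b_p}]$ obey, over one full period, the linear recurrence generated by the transfer matrix $M=\prod_{i=1}^{p}\bigl(\begin{smallmatrix} b_i & 1\\ 1 & 0\end{smallmatrix}\bigr)$, whose dominant eigenvalue $\lambda$ encodes the quadratic irrational (for the constant period $b$ one has $M=\bigl(\begin{smallmatrix} b & 1\\ 1 & 0\end{smallmatrix}\bigr)$ and $\lambda^{-1}=\phi_b^{-1}$, recovering Eq.~\ref{eq:Metallic_ratio}). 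As in the sketch for noble numbers, each nonzero tail value of $P(k,x)$ is a ratio of a numerator obeying the same period-$p$ recurrence to the denominator $q_n$; along each admissible residue class of degrees modulo $L_p$ both numerator and denominator grow like $\lambda^{\,n}$, so a Binet-type asymptotic forces the ratio to decay geometrically with per-period base $\lambda^{-1}$. This yields a finite family of parallel exponential sub-tails, one per admissible residue class --- exactly the two parallel tails seen for $\sqrt{3}-1=[\overline{1,2}]$ in Fig.~\ref{fig:Fig_log_dist} --- all sharing the common envelope base $\lambda^{-1}$, which is manifestly determined by the periodic block $(b_1,\dots,b_p)$.

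The hard part will be the rigorous control of the tail recurrence when the period is non-constant ($p\ge 2$). For a constant period the degree generation is a single self-similar cascade with one Fibonacci-type exponent, whereas for a general period the degrees emerge through a composite of $p$ elementary run-operations whose interleaving with the periodic holes must be tracked exactly. The crux is to show that this composite operation is governed by a single \emph{primitive} linear operator, so that Perron--Frobenius guarantees one dominant base $\lambda^{-1}$ rather than a genuinely subexponential or oscillatory envelope. I expect this to require setting up a transfer operator acting on the degree-amplitude vectors indexed by residue classes modulo $L_p$, proving its primitivity, and then reading off the exponential envelope and its base from the spectral radius of $M$; the identification of that base with a quantity intrinsic to the periodic pattern of zeros would then be immediate.
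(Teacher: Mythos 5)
This statement is Conjecture~2 of the paper: the authors explicitly leave it open and defer it to future work, proving only the special cases of noble numbers ${\cal C}_1(n)$ (rigorously), general noble numbers (sketch), and metallic ratios (Lagrange-multiplier argument, with the generalised case asserted without proof). So there is no proof in the paper to compare against; your proposal must stand on its own, and as it stands it is a credible research plan rather than a proof. The half concerning the periodic pattern of zeros is essentially complete: granting Theorem~\ref{Tma:Rep} for infinite Haros graphs, the run-length sequence of $[\overline{b_1,\dots,b_p}]$ is periodic, the repetition positions are eventually periodic with period $L_p=\sum_i b_i$, and the hole set follows. The identification of the candidate base $\lambda^{-1}$ with the spectral radius of $M=\prod_i\bigl(\begin{smallmatrix} b_i & 1\\ 1 & 0\end{smallmatrix}\bigr)$ is also the right object and is consistent with Eq.~\ref{eq:Metallic_ratio} and with $\sqrt{3}-1$ in Fig.~\ref{fig:Fig_log_dist}.

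There are, however, two genuine gaps. First, the reduction to the purely periodic case does not follow from Eq.~\ref{Pscaling}: that scaling is generated by $F(x)=x/(1+x)$, which acts on continued fractions only by $[a_1,a_2,\dots]\mapsto[a_1+1,a_2,\dots]$, i.e.\ it increments the \emph{first} coefficient. It cannot strip or absorb a general transient block $[a_1,\dots,a_m]$, so iterating it does not carry $[a_1,\dots,a_m,\overline{b_1,\dots,b_p}]$ to $[\overline{b_1,\dots,b_p}]$. What you would need is a graph-theoretic analogue of the Gauss shift, or a lemma asserting that the asymptotic tail of $P(k,x)$ depends only on the eventual run-structure of the symbolic path; neither exists in the paper, and the authors' own handling of transients (Eqs.~\ref{eq:abcd} and~\ref{Eq:C3n12}) tracks the last two convergents explicitly and is described as cumbersome even for noble numbers. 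Second, and more fundamentally, the claim that the multiplicities of newly created degrees obey the same period-$p$ linear recurrence as the convergent denominators is precisely the content of the conjecture; the paper only verifies it pictorially for the Fibonacci case (Fig.~\ref{fig:Ruta_aurea}), and for $p\geq 2$ the interaction between the $p$ successive run-operations and the merging-node bookkeeping of Appendix~\ref{appendix:B} has not been carried out. You correctly flag this as the crux, but the primitivity of the proposed transfer operator, and the fact that every admissible residue class modulo $L_p$ eventually receives positive mass (without which the ``envelope'' could degenerate), are exactly what remains to be proved. Until those two steps are supplied, the proposal is an outline of the conjecture, not a proof of it.
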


\begin{conjecture}
 Only quadratic irrationals have the properties displayed in the previous conjecture. Accordingly, non-quadratic irrationals have associated Haros graphs with a degree distribution which is not maximally entropic, i.e. does not have an exponential tail.
\end{conjecture}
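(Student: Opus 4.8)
The plan is to prove the contrapositive: beginning from a non-quadratic irrational $x$, show that $G_x$ cannot simultaneously display a periodic hole pattern and a single exponential envelope. The whole argument rests on the dictionary already assembled in this paper between the continued fraction $x=[a_1,a_2,\dots]$, its symbolic path $L^{a_1}R^{a_2}L^{a_3}\cdots$ in the Farey tree, the hole structure of $P(k,x)$ via Theorem \ref{Tma:Rep}, and the scaling relation Eq. \ref{Pscaling}. The strategy splits the two advertised properties --- periodic zeros and exponential tail --- and handles the first rigorously while reducing the second to a single statement about the local decay rate of the tail.

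First I would dispose of the hole pattern. By Theorem \ref{Tma:Rep}, a zero of $P(k,x)$ at degree $\kappa$ is equivalent to a repeated symbol ($LL$ or $RR$) at path-position $\kappa-2$, and since transitions between consecutive symbols occur exactly at block boundaries, the gaps between transitions recover the coefficients $\{a_i\}$ verbatim. Hence the hole pattern of $P(k,x)$ is a faithful transcription of $\{a_i\}$, and (because $\kappa\mapsto\kappa-2$ is a mere shift) it is eventually periodic if and only if $\{a_i\}$ is. By the Lagrange--Galois theorem quoted in Section III, eventual periodicity of $\{a_i\}$ characterises precisely the quadratic irrationals. This secures the ``periodic zeros $\Leftrightarrow$ quadratic'' half of the statement outright; a non-quadratic $x$ therefore already fails to have a periodic hole pattern, and the remaining work concerns only the tail.

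Next I would treat the envelope. Iterating Eq. \ref{Pscaling} over one excursion $L^{a_j}R$ of the path shifts degrees by a bounded amount and multiplies the distribution by a factor controlled by $a_j$ through the fractional-linear map $F$ of Eq. \ref{eq:F}; concatenating excursions suggests that the instantaneous ratio $b_k=P(k{+}1,x)/P(k,x)$ along the support depends only on a bounded window of coefficients $a_i$ active near path-position $\kappa-2\sim k$. For an eventually periodic path these windows repeat, the product telescopes over a period, and one recovers a genuine geometric tail of base $\phi_b^{-1}$; this forward direction is the content of the preceding conjecture and is already verified for the noble and metallic families (Eqs. \ref{eq:Pk_C1} and \ref{eq:Metallic_ratio}). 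Conversely, a clean exponential envelope forces $b_k$ to converge to a single constant, which by the finite-memory property should force the coefficient windows to stabilise, i.e. $\{a_i\}$ to be eventually periodic, and hence $x$ to be quadratic.

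The hard part will be this converse, and two obstacles stand out. The first is to establish the finite-memory lemma rigorously: one must track how each new degree is born at the boundary node, migrates to the interior, and has its \emph{limiting} frequency fixed by the ensuing zigzag (the mechanism sketched around Fig. \ref{fig:Ruta_aurea}), and then bound the dependence of $b_k$ on coefficients far from position $\kappa-2$. The second, more delicate, obstacle is to pin down the meaning of ``exponential tail'' and exclude spurious exponentials. For unbounded or wildly varying $\{a_i\}$ (for instance $e$, or Liouville numbers) the rate $b_k$ fluctuates without bound and no single exponential can fit; but for \emph{bounded} aperiodic $\{a_i\}$ (Sturmian-type sequences) unique ergodicity makes the Ces\`aro average of $\log b_k$ converge, so a well-defined exponential \emph{rate} exists even though the envelope oscillates and is not exactly geometric. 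Settling the conjecture therefore reduces to showing that only eventual periodicity yields a genuinely geometric tail with $b_k$ convergent --- equivalently, that the maximum-entropy law of Eq. \ref{GoldenDistribution} on the prescribed support is attained only for eventually periodic hole patterns, so that every non-quadratic $x$ sits strictly below the corresponding local maximum of $S(x)$ in Eq. \ref{def:Def3}. Establishing this strict deficit uniformly over all non-quadratic irrationals, and in particular over the bounded aperiodic case, is exactly what keeps the statement a conjecture rather than a theorem.
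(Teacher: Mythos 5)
This statement is labelled a conjecture in the paper, and the authors give no proof of it --- they explicitly remark that a proof ``would naturally yield a classification of real numbers into quadratic and non-quadratic irrationals.'' So there is no argument of record to compare yours against; the relevant question is whether your sketch actually closes the conjecture, and by your own admission it does not. What you do establish is worthwhile and correct: by Theorem \ref{Tma:Rep} the support of $P(k,x)$ for $k\geq 5$ is (up to the shift $\kappa\mapsto\kappa-2$) the set of partial sums $a_1+\cdots+a_j$ of the continued-fraction coefficients, and an indicator sequence of such a set is eventually periodic exactly when its gap sequence $\{a_i\}$ is; combined with Lagrange--Galois this gives ``periodic hole pattern $\Leftrightarrow$ quadratic'' as a genuine corollary of results already proved in the paper. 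Since the previous conjecture's property is a conjunction (periodic zeros \emph{and} exponential envelope), this alone already shows that no non-quadratic irrational can have both properties, i.e.\ it settles the first sentence of the statement. That observation is sharper than anything the paper records and would be worth isolating as a proposition.

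The genuine gap is the second sentence --- that the degree distribution of a non-quadratic irrational fails to have an exponential tail at all --- and your sketch does not close it. The ``finite-memory'' lemma (that the ratio $b_k=P(k{+}1,x)/P(k,x)$ on the support depends only on a bounded window of coefficients near position $k-2$) is asserted, not proved; the mechanism around Fig.~\ref{fig:Ruta_aurea} only describes how a \emph{single} degree's frequency stabilises under an eventual zigzag, and extending it to a uniform bound on the influence of distant coefficients requires controlling the scaling relation Eq.~\ref{Pscaling} across arbitrarily long blocks $L^{a_j}$, where the multiplicative factors $(1+mx)$ degenerate as $a_j$ grows. More importantly, even granting finite memory, the step ``$b_k$ converges $\Rightarrow$ coefficient windows stabilise $\Rightarrow$ $\{a_i\}$ eventually periodic'' is precisely the converse that has no proof: as you yourself note, bounded aperiodic (e.g.\ Sturmian) coefficient sequences could in principle produce a convergent Ces\`aro rate without periodicity, and ruling out an exactly geometric envelope in that regime is the open core. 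Your proposal is best read as a correct partial result plus an honest map of the remaining difficulty, not as a proof; the statement remains a conjecture after your argument just as it does in the paper.
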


For illustration of these conjectures, in Fig. \ref{fig:Fig_log_dist} we plot $P(k,x)$ for $x=\phi^{-1}$ (Golden number, period $1$), $x = [0; 2, 4, 1, 1, 2, 2, \overline{1}]$ (Noble number, period $1$), $x=\phi_{2}^{-1}$ (Silver ratio, period $2$), $x=\sqrt{3} - 1$ (non-noble, nonmetallic, quadratic, period $1,2$) and $x=e - 2$ (non-algebraic). A proof of conjecture 3 would naturally yield a classification of real numbers into quadratic and non-quadratic irrationals.\\
Furthermore, additional research should also be undertaken to explore the ability of Haros graphs to provide constructive classifications of other sets of irrationals, e.g., transcendental or normal numbers, this being an important open problem in mathematics. In this sense, see Fig. \ref{fig:Trascen_alg_numbers} where we plot the degree distributions of Haros graphs associated with different transcendental, algebraic and unclassified numbers,  giving a preliminary hint of the apparent differences between algebraic and trascendental numbers in the Haros graph representation.\\ 

\begin{figure}[h!]
\centering
\includegraphics[scale=0.4,clip=true]{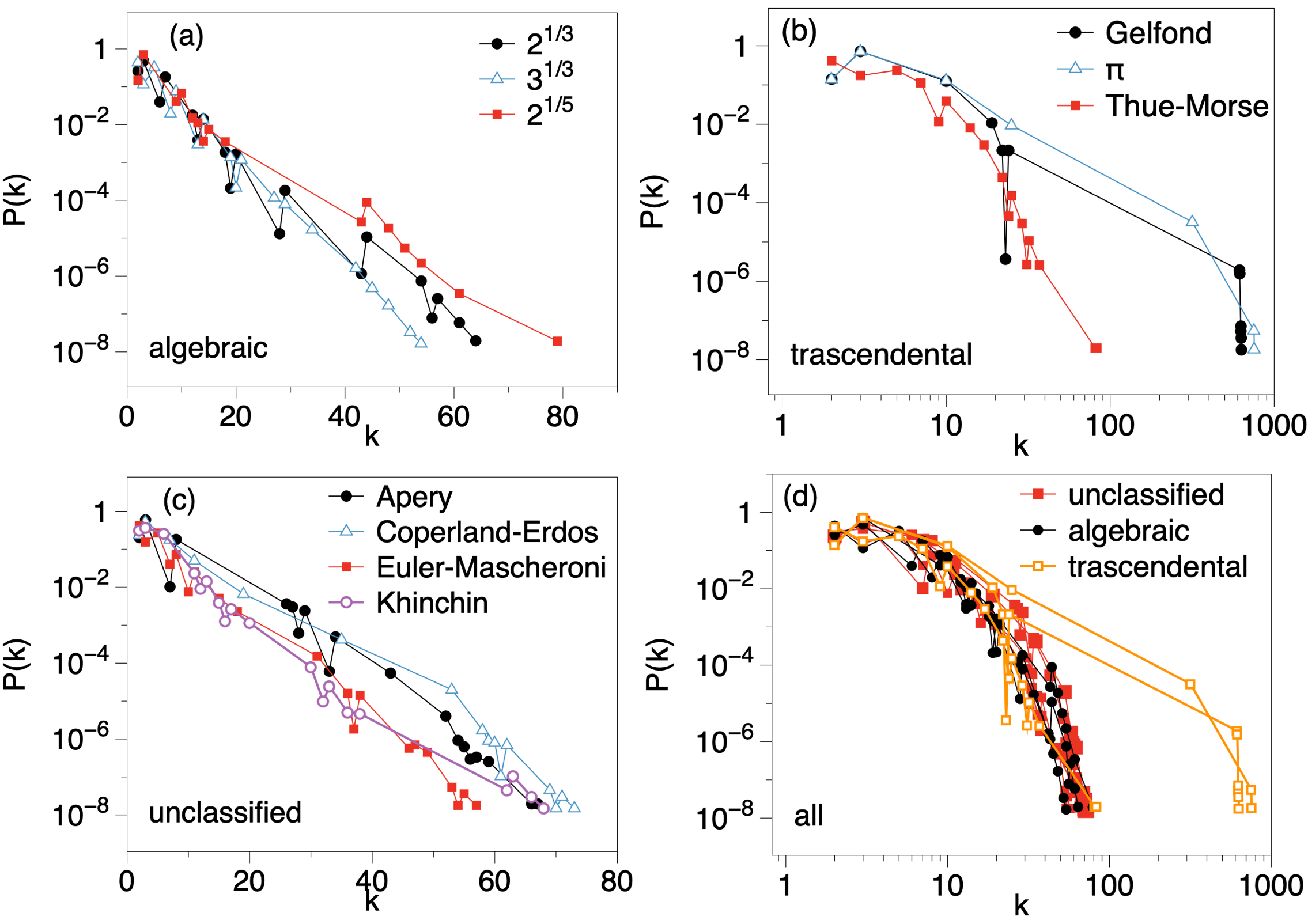}
\caption{Degree distributions $P(k,x)$ of Haros graphs associated with irrational numbers of different families: (a) (non-quadratic) algebraics (fractional parts of $\sqrt[3]{2}, \sqrt[5]{2}$  and $\sqrt[3]{3}$), (b) trascendentals (Gelfond constant $e^{\pi}$, the fractional part of $\pi$ and Prouhet–Thue–Morse $\tau$), (c) unclassified (Euler-Mascheroni constant $\gamma$, fractional part of Apery constant $\zeta(3)$, fractional part of Khinchin $K_0$ and Copeland-Erd\"{o}s constant $C_{CE} = 0.2357111317...$). Some plots are in semi-log, revealing approximate yet not perfect exponential trends for (non quadratic) algebraic numbers, while others are in log-log, revealing longer tails. Panel (d) merges all of them, and suggest that trascendentals and algebraics typically display different structural signatures.}
\label{fig:Trascen_alg_numbers}
\end{figure}

The above-mentioned open problems and conjectures relate to a proposed research programme on the \textit{structural} properties of Haros graphs. Complementary to this, we envision two additional research avenues. The first is about defining \textit{dynamical} rules on the set of Haros graphs by means of a graph renormalization operator \citep{AnalyticalFeigenbaum, QuasiperiodicGraphs} and accordingly studying how the dynamics attractors relate to the underlying number system.
The second stems from the fact that Noble numbers fulfil a maximum entropy principle, which suggests the development of a \textit{statistical-mechanical} formalism underlying the set of Haros graphs.
All in all, we sincerely hope that the community will find these open problems of interest and that the research programme on Haros graphs will get momentum in the years to come.


\newpage
\section{Appendix}
\addcontentsline{toc}{section}{Appendix}
\renewcommand{\thesubsection}{\Alph{subsection}}

\subsection{Proof of the three first values of  \texorpdfstring{$P(k,x)$}{Pkx}}

\label{appendix:A}

\noindent  In what follows, we assume $p/q<1/2$ without loss of generality. We first prove that, out of a total of $q$ nodes (the period), $G_{p/q}$ will have $p$ nodes with degree $k=2$  and $q - 2p$ nodes with degree $k=3$, i.e. $P(2,p/q)=p/q$ and $P(3) = 1 - 2\cdot p/q$.  We prove the results by induction over the level $n$ of  $\cup_{n} \ell_n$:\\

\noindent \textbf{Theorem: } ${P(2,p/q) = p/q}$.\\
\textit{Proof. }The root of the Haros graph tree $G_{1/2}$ has $p=1$ nodes with degree $k=2$. Let us assume now that two given graphs  are {\it adjacents} in the Farey binary tree at a given level $n$, with the property that their degree distributions fulfill the induction hypothesis $P(2,p_1/q_1)= p_1/q_1$ and $P(2,p_2/q_2)=  p_2/q_2$ for $G_{p_1/q_1}$ and $G_{p_2/q_2}$, respectively. By virtue of the properties of the concatenation operation, we have $G_{p_1/q_1} \oplus G_{p_2/q_2}=G_{p/q}$, where $p=p_1+p_2$ y $q=q_1+q_2$. Since, by construction in all Haros graphs, but $G_{0/1}$, the initial and final nodes have degree $ k \geq 2$ (Fig. \ref{fig:FareyGraphTree} illustrates this matter in the first level of Haros graph Tree), an Haros graph resulting from the concatenation of two other Haros graphs does not produce news nodes with degree $k=2$, hence the total number of nodes with degree $k=2$ in $G_{p/q}$  is simply $p=p_1+p_2$, hence $P(2,p/q)=p/q$. \qedsymbol\\

\noindent  \textbf{Theorem: }${P(3,p/q) = 1 - 2\cdot p/q}$.\\
\textit{Proof. }We must distinguish the special case $p/q = 1/q$. Trivially, we find that $G_{1/2}$ has $2-2\cdot1=0$ nodes with degree $k=3$ in agreement with the statement. Let us assume that $G_{1/(q-1)}$ has $(q-1)-2=q-3$ nodes of degree $k=3$. Then, the graph $G_{1/q} = G_{0/1} \oplus G_{1/(q-1)}$ will have $q-3$ nodes with $k = 3$ plus one node obtained by the concatenation of $G_{0/1}$; i.e, $P(3,1/q) = (q-2)/q = 1 - 2\cdot(1/q)$. For all other cases, since by construction, the initial and final nodes in each Haros graph have degrees $k \geq 2$, the concatenation of two graphs ($G_{p_1/q_1} \oplus G_{p_2/q_2}=G_{p/q}$) does not produce new nodes with $k=3$, hence the total number of nodes with $k=3$ in the concatenated graph is simply the sum $(q_1 -2p_1) + (q_2 - 2p_2) = (q_1 + q_2)  - 2(p_1 +p_2) = q - 2p$. \qedsymbol \\

\noindent  \textbf{Theorem: }${P(4,p/q) = 0 }$.\\
\textit{Proof. }Finally, every Haros graph $G_{p/q}$ with $p/q \neq 1/2$ has no nodes with $k=4$. The proof is based on an earlier fact: It is not possible for nodes of order 4 to appear, as none of them appear in the interior of any graph. It does not appear in the union node, as there are no unions of nodes with connectivity $1$ and $3$, nor $2$ and $2$ that can generate it. \qedsymbol \\

\noindent Observe, to conclude, that we have focused on $G_{p/q}$ for $p/q<1/2$. In the symmetric case $p/q>1/2$, we end up with the same result if we change $p\to q-p$: $P(2, p/q)=1-p/q, P(3,p/q)=2\cdot p/q-1$ and $P(4,p/q)=0$.  

\subsection{Proof of distribution of holes in \texorpdfstring{$P(k,x)$}{Pkx}}

\label{appendix:B}
In order to proof the theorem, we want to study how do the Haros graphs acquire their structure as they grow during the descent through the Haros graph Tree. The proof presents a slight technical drawback. The boundary node convention expressed in Section III.A is applied after the construction of the whole Haros graph tree, i.e., to obtain a new Haros graph, the ancestors do not have the boundary node identification.

\begin{lemma}
\label{lemma:5}
The boundary node has connectivity $\kappa +2$ at the level $\kappa$, $\forall \kappa \geq 3$.
\end{lemma}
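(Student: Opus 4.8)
The plan is to track the degrees of the two extreme nodes separately throughout the descent, \emph{before} applying the boundary identification, exactly as the appendix note requires. For a Haros graph $G$ built without the boundary convention, let $f(G)$ denote the degree of its first (leftmost) node and $r(G)$ the degree of its last (rightmost) node. Since the boundary node is formed by merging these two extremes while retaining all incident edges, its degree is precisely $f(G)+r(G)$. Hence the lemma is equivalent to the statement that $f(G)+r(G)=\kappa+2$ for every Haros graph $G$ appearing at level $\kappa$ of the tree, which I would prove by induction on $\kappa$.

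First I would record the elementary behaviour of $f$ and $r$ under concatenation. From the visual definition of $\oplus$, the first node of $A\oplus B$ is the first node of $A$, which acquires exactly one new incident edge (the edge that closes the boundary), while the last node of $A\oplus B$ is the last node of $B$, which likewise acquires exactly one new edge; the merging affects only the interior. This yields the two recurrences $f(A\oplus B)=f(A)+1$ and $r(A\oplus B)=r(B)+1$, valid for all $A,B$, including the degenerate ancestor $G_0$, whose extremes have degree $1$.

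The crucial structural input is that every $G=G_{p/q}$ is the concatenation of its two closest ancestors, $G=X_\ell\oplus X_r$, which is exactly the mediant identity of Eq.\ref{eq:conca_eq}; consequently $f(G)=f(X_\ell)+1$ and $r(G)=r(X_r)+1$. Now take any $G'$ at level $\kappa+1$: it is a child of its level-$\kappa$ parent $G$, hence either the left child $G'=X_\ell\oplus G$ (in which the closest left ancestor $X_\ell$ is inherited and $G$ becomes the new closest right ancestor) or the right child $G'=G\oplus X_r$. In the left case the recurrences give $f(G')=f(X_\ell)+1=f(G)$ and $r(G')=r(G)+1$, so $f(G')+r(G')=f(G)+r(G)+1$; in the right case symmetrically $r(G')=r(X_r)+1=r(G)$ and $f(G')=f(G)+1$, giving the same increment. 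With the base case $G_{1/2}=G_0\oplus G_0$, for which $f+r=2+2=\kappa+2$ at $\kappa=2$, the inductive step then yields $f(G')+r(G')=(\kappa+2)+1=(\kappa+1)+2$, completing the induction and in particular establishing the claim for all $\kappa\ge 3$.

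The one place demanding care, and the only real obstacle, is the bookkeeping of incident edges at the extremes: one must verify that in a concatenation the leftmost node of $A$ is never the node involved in the merge, so that it gains precisely one edge and no more, and that the identity $G=X_\ell\oplus X_r$ correctly assigns $X_\ell$ as the inherited left ancestor of the left child, so that the collapse $f(X_\ell)+1=f(G)$ reduces the ``$X_\ell$ side'' to $f(G)$ and leaves only the ``$G$ side'' to contribute the $+1$ (and symmetrically on the right). Checking these increments against the worked instances $G_{1/3},G_{1/4},G_{2/5}$ confirms the mechanism, after which the induction is routine.
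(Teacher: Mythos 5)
Your proposal is correct and follows essentially the same route as the paper's own proof: the paper likewise tracks the degrees of the two extreme nodes separately (its relations $k_1=k_1''+1$ and $k_q=k_{q'}'+1$ are exactly your recurrences $f(G)=f(X_\ell)+1$, $r(G)=r(X_r)+1$) and shows by induction that the boundary-node degree increases by exactly one per level for both the left child $X_\ell\oplus G$ and the right child $G\oplus X_r$. The only cosmetic difference is the base case ($G_{1/2}$ at level $2$ versus $G_{1/3},G_{2/3}$ at level $3$), which is immaterial.
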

\textit{Proof of Lemma B.\ref{lemma:5}.} By induction over $\kappa$: \\

For $\kappa = 3$, the graphs $G_{1/3}$ and $G_{2/3}$ have connectivity $\kappa = 5$ at the boundary nodes.

Induction hypothesis: Suppose the result for $\kappa \leq n$. Let be an Haros graph $G = G_L \oplus G_R$, where $G, G_L$ and $G_R$ are at level $\kappa \leq n$ as we see in the figure \ref{fig:Fig17}.\\

\begin{center}
\begin{figure}
\includegraphics[width=0.7\textwidth, trim=0cm 20cm 0cm 0cm,clip=true]{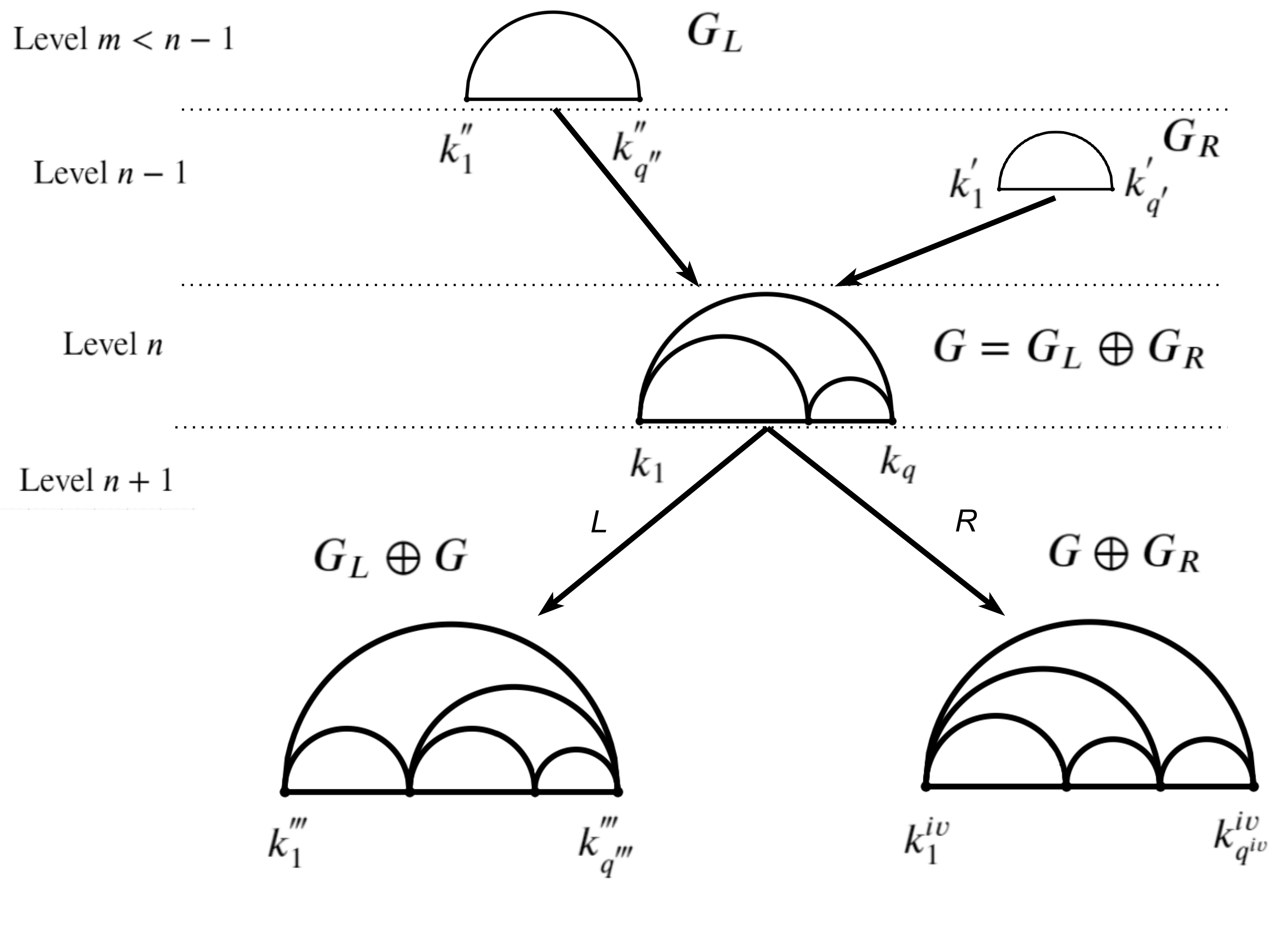}
\caption{Diagram of boundary node generation. Given the boundary node of Haros graph $G$ located at level $n$ and the boundary nodes of their ancestors $G_L$ and $G_R$, the diagram shows the boundary node of descendants of $G$ at level $n+1$.}
\label{fig:Fig17}
\end{figure}
\end{center}

The induction hypothesis gives us the result that the boundary nodes (the joining of two extreme nodes in the picture, as we mentioned in \textit{Boundary node convention}) are:

\begin{eqnarray}
k_{1}^{''} + k_{q''}^{''}& = & m + 2 \label{eqn:C1}\\
k_{1}^{'} + k_{q'}^{'}& = & (n-1) + 2 = n + 1 \label{eqn:C2} \\
k_{1} + k_{q}& = & n + 2 \label{eqn:C3}
\end{eqnarray}

The construction of Haros graphs implies the following relationships:

\begin{eqnarray}
k_{1} = k_{1}^{''} + 1  \label{eqn:C4}\\
k_{q} = k_{q}^{'} + 1 \label{eqn:C5}
\end{eqnarray}

Therefore, the descendant Haros graphs of $G$ are $G_L \oplus G$ and $G \oplus G_R$, located at the level $\kappa + 1$. The boundary node of $G_L \oplus G_L$ has connectivity:

$$k_{1}^{'''} +  k_{q'''}^{'''} = k_1 + (k_q + 1) = n + 3$$

whereas for $G \oplus G_R$:

$$k_{1}^{iv} +  k_{q^{iv}}^{iv} = (k_1 +  1) + k_q  = n + 3$$ \qedsymbol

\begin{corollary}
\label{cor:2}
The merging node has a smaller connectivity than the boundary node.
\end{corollary}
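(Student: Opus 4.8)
The plan is to compare the two vertices by writing each connectivity in terms of the first‑ and last‑vertex degrees of the two ancestor graphs being concatenated. For an Haros graph $H$ produced by the construction (read off \emph{before} the boundary‑node identification), let $f(H)$ and $l(H)$ denote the degrees of its first (leftmost) and last (rightmost) vertices. Fix $G=G_L\oplus G_R$ at level $\kappa$. By the definition of $\oplus$ the new connecting edge joins the two \emph{extreme} vertices, so the boundary node of $G$ fuses the first vertex of $G_L$ with the last vertex of $G_R$, each of which has just acquired that edge; its connectivity is therefore $B=f(G_L)+l(G_R)+2$, which equals $\kappa+2$ by Lemma \ref{lemma:5}. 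The merging node, by contrast, fuses the last vertex of $G_L$ with the first vertex of $G_R$ and receives \emph{no} new edge, so its connectivity is $M=l(G_L)+f(G_R)$. The goal is thus to show $M<B$.

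First I would reduce this to a count of tree levels. Applying Lemma \ref{lemma:5} to each ancestor gives $f(H)+l(H)=\kappa_H+2$ whenever $H$ sits at level $\kappa_H\geq 2$, and since every such $H\neq G_0$ has $f(H),l(H)\geq 2$ (the extreme vertices always gain the connecting edge), one also gets $f(H),l(H)\leq \kappa_H$. Substituting $l(G_L)=\kappa_L+2-f(G_L)$ and $f(G_R)=\kappa_R+2-l(G_R)$, and using $f(G_L)+l(G_R)=\kappa$ (the boundary identity for $G$), yields the compact formula
\begin{equation*}
M=\kappa_L+\kappa_R+4-\kappa ,
\end{equation*}
valid whenever both ancestors differ from $G_0$.

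The heart of the argument is then a structural fact about the Haros/Farey binary tree: among $G_L$ and $G_R$, exactly one is the parent $P$ of $G$, at level $\kappa-1$, while the other is the Farey‑neighbour of $P$ that $G$ inherits. Since the two Farey‑neighbours of any node lie on its root‑path as proper ancestors, this inherited neighbour sits at some level $\mu\leq \kappa-2$. Hence $\kappa_L+\kappa_R=(\kappa-1)+\mu\leq 2\kappa-3$, and the formula above gives $M\leq \kappa+1<\kappa+2=B$, which is the claim. The only remaining case is when the inherited neighbour is the special graph $G_0$ (level $1$), for which Lemma \ref{lemma:5} does not apply; there one uses $f(G_0)=l(G_0)=1$ directly, so that $M=1+f(P)$ or $M=l(P)+1$, and since $f(P),l(P)\leq \kappa-1$ we again get $M\leq \kappa<B$. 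The base concatenation $G_{1/2}=G_0\oplus G_0$ is checked by hand, with $M=2<4$.

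The hard part will be precisely the level bookkeeping of the third paragraph: making rigorous that the non‑parent ancestor is always a proper ancestor of $P$ (so its level is at most $\kappa-2$), and cleanly dispatching the $G_0$ ancestor, where the identity $f+l=\kappa+2$ fails and the two connectivities must be read off by hand rather than inferred from Lemma \ref{lemma:5}. Once that is settled, the bound $M\leq \kappa+1<\kappa+2$ holds uniformly and the corollary follows at once.
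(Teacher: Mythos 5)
Your argument is correct and follows essentially the same route as the paper: both apply Lemma \ref{lemma:5} to $G$ and to its two ancestors together with the concatenation relations $k_1=k_1''+1$, $k_q=k_{q'}'+1$ to reduce the merging-node degree to $M=\kappa_L+\kappa_R+4-\kappa$ (the paper's $k_M=m+3$) and compare it with $B=\kappa+2$. Your write-up is merely more explicit on two points the paper passes over silently — why the non-parent ancestor sits at level at most $\kappa-2$, and the $G_0$ ancestor for which the identity $f+l=\kappa_H+2$ fails and the degrees must be read off by hand.
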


\textit{Proof of Corollary B.\ref{cor:2}.} The merging node of $G$ has connectivity (using equations [\ref{eqn:C1}, \ref{eqn:C3}, \ref{eqn:C4}, \ref{eqn:C5}]):

\begin{eqnarray*}
k_M &  = & k_{q''}^{''} + k_{1}^{'} =  (m + 2 - k_{1}^{''}) + (n + 1 - k_{q'}^{'}) \\
& = & (m - 2 - (k_1 -1)) + (n+1- (k_q -1)) \\
& = & m + 3 + n + 2 - (k_1 + k_q) = m + 3 \\
& < & n + 2 = k_1 + k_q
\end{eqnarray*}
\qedsymbol

It is easy to verify the following result:

\begin{corollary}
\label{cor:3}
The boundary node has the highest connectivity in the Haros graph.
\end{corollary}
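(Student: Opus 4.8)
The plan is to argue by strong induction on the level $n$ of the Haros graph, leveraging the two facts already established: that the boundary node at level $\kappa$ has connectivity exactly $\kappa+2$ (Lemma~\ref{lemma:5}), and that the merging node is strictly less connected than the boundary node (Corollary~\ref{cor:2}). The first observation I would record is a \emph{freezing} property of the concatenation operator: because $\oplus$ only touches the extreme nodes of its two arguments (it merges one pair of extremes into the merging node and links the other pair into the new boundary node), every \emph{strictly inner} node of either factor keeps its connectivity unchanged in the concatenated graph. Thus an inner node's degree is fixed forever the moment it ceases to be an extreme node.

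For the base case I would check the smallest graphs directly, e.g. $G_{1/2}$ with degree sequence $[2,4]$ and $G_{1/3}$ with $[3,2,5]$, where the boundary node ($4$, resp. $5$) is manifestly the unique maximum.

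For the inductive step, take $G$ at level $n+1$, written $G = G_L \oplus G_R$ with both immediate ancestors at levels at most $n$. Using the freezing property, I would partition the node set of $G$ into exactly four classes: the strictly inner nodes inherited from $G_L$, those inherited from $G_R$, the single merging node, and the single boundary node. The boundary node has connectivity $(n+1)+2 = n+3$ by Lemma~\ref{lemma:5}. The merging node has connectivity strictly below $n+3$ by Corollary~\ref{cor:2}. For the inherited inner nodes, the induction hypothesis applied to $G_L$ (at some level $\ell_L \le n$) says its boundary node is the maximum, so each of its strictly inner nodes has connectivity at most $\ell_L + 2 \le n+2 < n+3$; the same bound holds for $G_R$. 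Hence every node other than the boundary node has connectivity strictly less than $n+3$, so the boundary node is the strict maximum, closing the induction.

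The main obstacle I anticipate is not the arithmetic but the bookkeeping of the freezing step: one must verify from the precise definition of $\oplus$ that the two extreme nodes of each factor are exactly the ones redistributed (one extreme of $G_L$ and one of $G_R$ forming the merging node, the remaining two forming the boundary node), so that the four-class partition is genuinely exhaustive and no node is double-counted or omitted. Once this partition is pinned down, the degree bounds follow immediately from the two cited results.
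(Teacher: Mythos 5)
Your proof is correct: the four-class partition (inner nodes of each factor, merging node, boundary node) is exhaustive, the freezing of inner-node degrees under $\oplus$ is exactly what the paper's own degree-sequence computations rely on, and the bounds $\ell_L+2\le n+2<n+3$ from the strong induction hypothesis together with Lemma \ref{lemma:5} and Corollary \ref{cor:2} close the argument. The paper gives no proof of this corollary at all (it only states that it ``is easy to verify''), and your induction is precisely the verification the authors leave implicit, so there is nothing to contrast --- the only point worth flagging is that the base of the recursion also involves $G_0$, whose boundary node has degree $2$ rather than $\kappa+2$, but since $G_0$ has no inner nodes this does not affect your inductive step.
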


\noindent The lemma \ref{lemma:5} and the corollaries \ref{cor:2} and \ref{cor:3} lead us to the following conclusion: The connectivity $\kappa$ may appear, at first, at the level $\kappa - 2$.
\begin{lemma}
\label{lem:4}
The symbol repetition in a path in the Haros graph Tree (either $LL$ or $RR$) will not generate a new connectivity except in the boundary node.
\end{lemma}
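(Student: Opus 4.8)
The plan is to compute the degree of the unique node created by each concatenation, the merging node, and to show that this degree is governed entirely by the \emph{level} of the ancestor being attached, which is in turn fixed by whether the last two symbols coincide. By the mirror symmetry $x\mapsto 1-x$ recorded after Theorem~\ref{Tma:P234}, it suffices to treat a left repetition $LL$; the case $RR$ is identical.

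I would first set up the bookkeeping. Writing a Haros graph $H$ in full form (its two extreme nodes not yet identified), let $f(H)$ and $l(H)$ be the degrees of its first and last node, so that the boundary degree equals $f(H)+l(H)=n+2$ at level $n$ by Lemma~\ref{lemma:5}, and by Corollary~\ref{cor:3} this is the strict maximal degree of $H$. Since every non-root Haros graph is the mediant concatenation $H=X_\ell\oplus X_r$ of its closest left and right ancestors (Eq.~\ref{eq:conca_eq}), the construction rule yields, once and for all, $f(H)=f(X_\ell)+1$. Now take a left step $G'=X_\ell\oplus G$ with $G$ at level $n$: the inner nodes of $X_\ell$ and of $G$ retain their degrees, and the single new node is the merging node, of degree $l(X_\ell)+f(G)=l(X_\ell)+f(X_\ell)+1$, that is, the boundary degree of $X_\ell$ plus one.

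The decisive step is then the level of $X_\ell$. Under a repetition, the symbol reaching $G$ was also $L$, so $G$ is a left child; its closest right ancestor is therefore its immediate parent at level $n-1$, while its closest left ancestor $X_\ell$ is the other Stern-Brocot parent, necessarily at level at most $n-2$. Hence the boundary degree of $X_\ell$ is at most its level plus two, i.e. at most $n$, and the merging node of $G'$ has degree at most $n+1$. Since in addition every inner node of $X_\ell$ and of $G$ lies strictly below its graph's boundary by Corollary~\ref{cor:3}, and hence is at most $n+1$, every inner node of $G'$ has degree at most $n+1$, whereas the boundary degree of $G'$ is $n+3$ by Lemma~\ref{lemma:5}. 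Therefore the value $n+2$ — the maximal, freshly-born connectivity of $G$ — is realised at no node of $G'$, and in particular at no inner node: a repetition advances the maximal connectivity only at the boundary, which is exactly the claim.

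The step I expect to be the crux is precisely this level estimate for $X_\ell$, because it is what distinguishes a repetition from a change. I would therefore run the complementary computation alongside it: had $G$ instead been reached by a right step (so that the present $L$ is a change), its closest left ancestor $X_\ell$ would be the immediate level-$(n-1)$ parent, whose boundary degree is $n+1$, making the merging node have degree exactly $n+2$ — so the freshly-born connectivity \emph{would} then be deposited as an inner node. Displaying both cases makes transparent that the entire content is this dichotomy in the level of $X_\ell$; the only remaining care is the handling of the boundary-node convention, namely that the ancestors enter the concatenation in full form, so that the identity $f(H)=f(X_\ell)+1$ and the merging-degree formula may be applied without ambiguity.
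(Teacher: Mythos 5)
Your approach is essentially the paper's: you decompose $G$ into its closest Stern--Brocot ancestors, compute the degree of the one genuinely new node (the merging node) from the boundary degrees via Lemma~\ref{lemma:5}, and locate the whole content of the dichotomy in the level of $X_\ell$ (at most $n-2$ under a repetition, exactly $n-1$ under a change). All of those steps are correct, including the identity $f(H)=f(X_\ell)+1$ and the resulting formula ``merging degree of $G'$ equals the boundary degree of $X_\ell$ plus one.'' The complementary computation for the change case is also exactly the first half of the paper's argument.

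There is, however, a gap between what you prove and what the lemma asserts. You conclude only that the specific value $n+2$ is realised at no inner node of $G'$; the lemma claims that \emph{no} new connectivity appears at any inner node. The one node whose degree you have not matched against $G$ is the merging node of $G'$ itself: you bound its degree by $n+1$, but a degree $\le n+1$ could in principle still be absent from $G$ and hence ``new.'' The paper closes precisely this point by showing that the merging node of $G'$ has the \emph{same} degree as the merging node of $G$, so nothing new is created there. Within your own bookkeeping this is a one-line addition: since $G$ is the left child of $X_r$, the closest left ancestor of $X_r$ is again $X_\ell$, so $f(X_r)=f(X_\ell)+1$ and the merging node of $G$ has degree $l(X_\ell)+f(X_r)=l(X_\ell)+f(X_\ell)+1$ --- identical to the degree $l(X_\ell)+f(G)$ you computed for the merging node of $G'$. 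Add that observation and your argument fully establishes the lemma; as written it establishes only the weaker (though operationally central) statement needed for Theorem~\ref{Tma:Rep}. Note also that the paper's proof goes on to check that $n+2$ cannot be recovered by later descents, but that step belongs to the proof of the theorem rather than of this lemma.
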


\textit{Proof of Lemma B.\ref{lem:4}. } We start by considering the diagram of the lemma \ref{lemma:5}. If $G = G_L \oplus G_R$ is located at level $\kappa = n$ and $G_R$ is located at $\kappa = n - 1$, then $G$ is the left descendant of $G_R$. Hence, the last symbol is $L$ (the proof is analogous for $R$). \\

The right descendant $G \oplus G_R$ has a merging node with connectivity:

$$
k_{M}^{iv} = k_q + k_1^{'} = k_{q'}^{'} + 1 + k_{1}^{'} = n + 2
$$
where we use $k_{1}^{'} + k_{q'}^{'} = n + 1$ according to the lemma \ref{lemma:5}. Hence, the connectivity $n + 2$ is enclosed in the Haros graph, and the subtree with root $G \oplus G_R$ will contain this connectivity in each Haros graph.\\

The left descendant $G_L \oplus G$ has a merging node with connectivity:

$$
k_{M}^{'''} = k_{q''}^{''} + k_1 = k_{q''}^{''} + k_{1}^{''}+ 1 = m + 3
$$

Using again the lemma \ref{lemma:5} to highlight $k_{q''}^{''} + k_{1}^{''} = m + 2$. However, this is the same connectivity as the merging node of $G$. Using equations \ref{eqn:C1}, \ref{eqn:C2} and $k_1 + k_q = n + 2$:

$$
k_{q''}^{''} + k_{1}^{'} = (m + 2 - k_{1}^{''}) + (n + 1 - k_{q'}^{'} = (m + 2 - k_1 + 1) + (n + 1 - k_q + 1) =  m + 3 + n + 2 - (k_1 + k_q) = m + 3
$$

To conclude the proof of the theorem, we need to ensure that connectivity $\kappa = n + 2$ cannot appear. But the merging node in $(G_L \oplus G) \oplus G$ has connectivity:

$$
k_{q'''}^{'''} + k_1 = k_q + 1 + k_1 = n + 3
$$.

Then, the connectivities generated in this subtree will be greater than $n + 2$. Analogously, it is easy to verify that $G_L \oplus (G_L \oplus G)$ will also not contain connectivity $n + 2$.
\qedsymbol

\subsection{Proof of scaling equations of \texorpdfstring{$P(k,x)$}, \texorpdfstring{$H(x)$} and \texorpdfstring{$S(x)$} (equations \ref{Pscaling1},  \ref{Pscaling}, \ref{Hscaling} and \ref{SHscaling})}
\label{appendix:C}
In order to prove the different scaling equations, we first analyse the operator $F(x) = \frac{x}{1+x}$ as real number function and as a Haros graph operator. Therefore, the first result is easy to prove:

\begin{lemma}
$x \in \ell_n \Rightarrow F(x) \in \ell_{n+1}.$
\label{lemma_Fx_niv}
\end{lemma}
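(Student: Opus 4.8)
The plan is to reduce the statement to the action of $F$ on continued fractions already recorded in the main text, by first extracting from the Farey-tree construction a clean dictionary between the level index of a node and its continued fraction coefficients.

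First I would fix that dictionary. Every $x\in\ell_n$ is rational and hence has a finite continued fraction $x=[a_1,a_2,\dots,a_m]$; by the correspondence recalled in Section II its symbolic path is $L^{a_1}R^{a_2}L^{a_3}\cdots$, with the convention that the final block carries exponent $a_m-1$. Thus the path has length $a_1+a_2+\cdots+a_m-1$, and since a path of length $r$ emanating from the root pair $\ell_1$ ends at level $\ell_{r+1}$, the node $[a_1,\dots,a_m]$ sits at level $\ell_s$ with
\[
s=(a_1+a_2+\cdots+a_m-1)+1=\sum_{i=1}^m a_i.
\]
In words, the level index of a rational is exactly the sum of its continued-fraction coefficients (e.g. $1/2=[2]\in\ell_2$, $2/3=[1,2]\in\ell_3$, $2/5=[2,2]\in\ell_4$, $8/13=[1,1,1,1,2]\in\ell_6$). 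Pinning down this identity rigorously — in particular checking that the $-1$ in the last exponent is precisely compensated by the offset between path length and level — is the one place deserving care, and I would settle it by a short induction on $m$.

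Granting this invariant, the lemma is immediate. It was already established that $F(x)=x/(1+x)$ acts on continued fractions by incrementing the first coefficient, $F([a_1,a_2,\dots,a_m])=[a_1+1,a_2,\dots,a_m]$ (equivalently, $F$ prepends one extra $L$ to the symbolic path). Hence if $x=[a_1,\dots,a_m]\in\ell_n$, so that $\sum_i a_i=n$, then $F(x)=[a_1+1,a_2,\dots,a_m]$ has coefficient sum $(a_1+1)+a_2+\cdots+a_m=n+1$, and by the invariant $F(x)\in\ell_{n+1}$.

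Since the whole argument is bookkeeping, I expect no serious obstacle beyond the level/coefficient-sum identity noted above; the only genuine edge case is the degenerate boundary node $0/1\in\ell_1$, which $F$ fixes ($F(0/1)=0/1$) and which must therefore be excluded from the statement. Every other element of $\ell_1$ behaves correctly ($F(1/1)=1/2\in\ell_2$), and for each $x\in\ell_n$ with $n\ge 2$ the continued-fraction computation above applies verbatim, so the conclusion $F(\ell_n)\subseteq\ell_{n+1}$ holds throughout the range where the lemma is subsequently used.
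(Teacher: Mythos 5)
Your proof is correct. Note that the paper itself offers no argument for this lemma (it is merely declared ``easy to prove'' at the start of Appendix C), so there is no authorial proof to compare against; your write-up supplies the missing details. The two ingredients you use both check out against the paper's own conventions: the path of $[a_1,\dots,a_m]$ is $L^{a_1}R^{a_2}\cdots$ with the last exponent reduced by one, so its length is $\sum_i a_i - 1$ and, since the descent starts at $1/1\in\ell_1$, the node sits at level $\sum_i a_i$ (this matches $1/2=[2]\in\ell_2$, $2/5=[2,2]\in\ell_4$, $8/13=[1,1,1,1,2]\in\ell_6$, and is insensitive to the ambiguity $[a_1,\dots,a_m]=[a_1,\dots,a_m-1,1]$ because both representations have the same coefficient sum); and the identity $F([a_1,a_2,\dots])=[a_1+1,a_2,\dots]$ is exactly the fact recorded in Section III.B, verifiable in one line from $x=1/(a_1+y)\Rightarrow F(x)=1/(a_1+1+y)$. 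You are also right to flag $0/1$ as the one genuine exception ($F(0)=0$ stays in $\ell_1$); this is harmless since the lemma is only invoked for $x$ in the subtrees $T_n\subset(0,1/2]$ and for the symmetric counterparts in $(1/2,1]$. The only stylistic remark is that your promised ``short induction on $m$'' for the level/coefficient-sum identity is not actually needed once the path-length count is written down, since the identity is immediate from it.
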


\noindent Lemma \ref{lemma_Fx_niv} tells us that the image through the operator $F(x)$ is one level lower than $x$. The following lemma shows that the frequencies of degrees $k \geq 5$ of $G_{F(x)}$ are the same as the frequencies of degrees of $G_x$ but with displacement:
\begin{lemma}
\label{lemma:10}
Let be an Haros graph $G_x$ and let $m_{k,x}$ be the number of nodes with degree $k \geq 5$. Then, $m_{k,x} = m_{k+1,F(x)}$ where $F(x) = \frac{x}{1 + x}.$
\end{lemma}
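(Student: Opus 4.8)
The plan is to prove the stronger, self-reinforcing statement by induction on the denominator $q$ of $x=p/q\in(0,1/2)$, while simultaneously tracking the two outer degrees of each graph. The starting observation is that the map $F(x)=x/(1+x)$ of Eq.~\ref{eq:F} is a homomorphism for the mediant/concatenation structure: writing $x=x_\ell\oplus x_r$ for the two Farey parents of $x$ (so that $G_x=G_{x_\ell}\oplus G_{x_r}$ by Eq.~\ref{eq:conca_eq}), the identity $F(p/q)=p/(p+q)$ gives at once $F(x)=F(x_\ell)\oplus F(x_r)$, and the unimodularity $p_r q_\ell-p_\ell q_r=1$ is preserved under $F$, so $F(x_\ell)$ and $F(x_r)$ are exactly the Farey parents of $F(x)$. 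Hence $G_{F(x)}=G_{F(x_\ell)}\oplus G_{F(x_r)}$, which turns the claim into an inductive statement compatible with $\oplus$ (Lemma~\ref{lemma_Fx_niv} is the level-counting shadow of this fact). Since the Farey parents have strictly smaller denominator and again lie in $[0,1/2]$, the induction stays in range.

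Next I would introduce, for any Haros graph written as a degree sequence \emph{before} the boundary identification, the two numbers $a(y)$ and $b(y)$ recording the degrees of its first and last node. The concatenation rules of Section~III then read off cleanly: $G_x$ has first degree $a(x)=a(x_\ell)+1$, last degree $b(x)=b(x_r)+1$, a single merging node of degree $b(x_\ell)+a(x_r)$, and (after identifying the two outer nodes) a boundary node of degree $a(x_\ell)+b(x_r)+2$, while all remaining inner nodes are inherited \emph{unchanged} from $G_{x_\ell}$ and $G_{x_r}$. Alongside the main induction I would prove the corner relations $a(F(x))=a(x)$ and $b(F(x))=b(x)+1$; these are immediate from $a(F(x))=a(F(x_\ell))+1$, $b(F(x))=b(F(x_r))+1$ and the inductive hypothesis, using only the $a$-relation at the left parent and the $b$-relation at the right parent.

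The core step is then a bookkeeping recursion for the counts, using Iverson-bracket notation $[\,\cdot\,]$. I would write $m_{k,x}$ as a sum of four contributions dictated by the decomposition above: the inner nodes inherited from each parent contribute $m_{k,x_\ell}-[a(x_\ell)+b(x_\ell)=k]$ and $m_{k,x_r}-[a(x_r)+b(x_r)=k]$ (subtracting each parent's boundary-node indicator, since that node is not inner in $G_x$), plus the indicators $[b(x_\ell)+a(x_r)=k]$ for the merging node and $[a(x_\ell)+b(x_r)+2=k]$ for the boundary node. Writing the same decomposition for $m_{k+1,F(x)}$ and substituting the corner relations, every one of the four target degrees is raised by exactly $1$ (for instance the merging degree becomes $b(x_\ell)+1+a(x_r)$ and the boundary degree $a(x_\ell)+b(x_r)+3$), so the indicators match via $[\,\cdot=k+1\,]=[\,\cdot=k\,]$ and the inherited counts match by induction through $m_{k+1,F(x_\ell)}=m_{k,x_\ell}$ and $m_{k+1,F(x_r)}=m_{k,x_r}$; summing gives $m_{k+1,F(x)}=m_{k,x}$.

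The hard part is the boundary bookkeeping, because the corner relations fail at the two degenerate graphs: $F(0/1)=0/1$ (so $b$ is not incremented) and $F(1/1)=1/2$ (so $a$ is not preserved). I would dispose of these by checking the base cases $x\in\{0/1,1/1,1/2\}$ directly, since $G_0$ and $G_{1/2}$ carry no node of degree $\ge 5$ and both sides vanish, and then by verifying that the two exceptions never corrupt a count for $k\ge 5$: the node $0/1$ occurs only as a left parent, which forces $x=1/n$, $x_r=1/(n-1)$ with $a(x_r)=2$, so the affected merging and $0/1$-boundary nodes have degrees $3$ and $2$ and contribute $0$ to both counts; and $1/1$ occurs as a right parent only for $x=1/2$, already handled. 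Here Lemma~\ref{lemma:5} (the boundary node carries the maximal degree $\kappa+2$ at level $\kappa$) is what guarantees that no inner node enters the range $k\ge 5$ except through the channels accounted for above, which closes the induction.
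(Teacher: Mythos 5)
Your strategy is sound and close in spirit to the paper's own argument: both proofs induct over the Farey tree using the decomposition $G_x=G_{x_\ell}\oplus G_{x_r}$ and the fact that $F$ respects it. What you add --- the explicit observation that $F$ is a homomorphism for the mediant/parent structure, and the exact indicator identity expressing $m_{k,x}$ through the four special degrees $a(x_\ell)+b(x_\ell)$, $a(x_r)+b(x_r)$, $b(x_\ell)+a(x_r)$ and $a(x_\ell)+b(x_r)+2$ --- is a cleaner bookkeeping device than the paper's case analysis on which node carries degree $k$ (which leans on Lemma \ref{lemma:5} and Corollaries \ref{cor:2} and \ref{cor:3}), and your corner relations $a(F(x))=a(x)$, $b(F(x))=b(x)+1$ make the shift-by-one mechanism transparent. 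Your treatment of the $0/1$ degeneracy is correct, and in fact Lemma \ref{lemma:5} is not really needed in your version, since the identity accounts for every node exactly.

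The genuine gap is the domain. You prove the lemma only for $x\in[0,1/2]$ plus three base points, but the statement is for an arbitrary Haros graph and the paper genuinely needs $x>1/2$: Corollary \ref{cor:12} computes $P(5,F(x))=(2x-1)/(x+1)$ precisely for $x>1/2$, invoking Theorem \ref{Tma10} (hence this lemma) for such $x$. Your claim that ``$1/1$ occurs as a right parent only for $x=1/2$'' is true inside $(0,1/2)$ but false on $[0,1]$: $1/1$ is the right parent of every $x=n/(n+1)$. There the corner relation for $a$ fails, since $a(F(1/1))=a(1/2)=2=a(1/1)+1$, so the merging degree of $G_{F(x)}$ equals $b(F(x_\ell))+2=b(x_\ell)+3$, a shift of \emph{two} relative to the merging degree $b(x_\ell)+1$ of $G_x$ --- this is exactly the origin of the anomalous degree-$5$ node in Corollary \ref{cor:12}. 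The lemma survives because $b\bigl((n-1)/n\bigr)=2$ forces these two degrees to be $3$ and $5$, both outside the window $k\ge 5$, $k+1\ge 6$; but your proof as written neither covers this case nor can it be imported by the mirror symmetry $x\mapsto 1-x$, since $F(1-x)\ne 1-F(x)$. One additional paragraph running the induction over all of $(0,1)$ and checking the $x=n/(n+1)$ exception closes the argument.
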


\textit{Proof of Lemma B.\ref{lemma:10}. }
By induction over the levels of the Farey binary tree $\ell_n$:
\begin{itemize}
    \item In case $n = 3$, the proof is trivially true because there is only one degree $k$ greater than $5$, where we have $m_{5,1/3} = m_{6,1/4}$.
    
    \item Induction hypothesis: The result is true for every $G_{p/q}$ with $p/q \in \cup_{i\leq n} \ell_i$.
    
    \item Let us consider $G_{p/q}$ an Haros graph with $p/q \in \ell_{n+1}$. Then $G_{p/q} = G_{p_1 /q_1} \oplus G_{p_2/ q_2}$, with $p_1 /q_1, p_2/q_2 \in \cup_{i\leq n} \ell_i$. Consider an arbitrary non-null $k$ in the degree distribution of $G_{p/q}$. Therefore, we have $3$ possibilities:
    \begin{itemize}
        \item The boundary node has degree $k$: The lemma \ref{lemma:5} and the corollary \ref{cor:3} imply that $k = n + 3$ and $m_{k,p/q} = 1$. Hence, by the lemma \ref{lemma_Fx_niv} we get $F(p/q) \in \ell_{n+2}$ and its boundary node has degree $k = n + 4$. Again, by lemma \ref{lemma:5} and corollary \ref{cor:3}, we get $m_{k + 1, F(p/q)} = 1$ and the result is verified.
        
        \item The merging node has degree $k$: To clarify this case, we must distinguish between two cases:
        
        \begin{itemize}
            \item $m_{k, p/q} = 1$, i.e, the merging node is the only node with degree $k$. Therefore, an ancestor has the boundary node of degree $k$. We can assume, without loss of generality, that it is the left ancestor $p_1/q_1$. It is easy to see that we are in the case where $m_{k,p_1/q_1} = 1$, and even more so, this node of degree $k$ is the boundary node. Therefore, it is easy to verify that this the right ancestor $p_2/q_2$ cannot have nodes of degree $k$. Reasoning like in the previous case, we have: $$m_{k + 1, F(p/q)} = m_{k + 1, F(p_1/q_1)} + m_{k + 1, F(p_2/q_2)} = 1 + 0 = 1 = m_{k, p/q}.$$
            
            \item $m_{k, p/q} > 1$, i.e, other inner nodes have a degree $k$ apart from the merging node. In that subcase, we again find that one ancestor has no degree $k$. The other ancestor, supposing again $p_1/q_1$, has exactly the same number of nodes with degree $k$ as inner nodes of degree $k$ in $p/q$, i.e, $m_{k, p_1/q_1} = m_{k, p/q} - 1$. Applying the induction hypothesis and the concatenation of $F(p_1/q_1) \oplus F(p_2/q_2)$, we have $m_{k,p/q} = m_{k+1,F(p/q)}.$ \\
        \end{itemize}
        
        \item Only some inner nodes have degree $k$. In that case, we must distinguish two subcases:\\
        \begin{itemize}
        
            \item The number of nodes with degree $k$ in $G_{p/q}$ is the sum of nodes with degree $k$ in his ancestors, that is: $$m_{k, p/q} = m_{k, p_1/q_1} + m_{k, p_2/q_2}.$$ 
            Therefore, by hypothesis induction: $$m_{k, p_1/q_1} = m_{k+1, F(p_1/q_1)}$$
            $$m_{k, p_2/q_2} = m_{k+1, F(p_2/q_2)}$$ and we clearly have: $$m_{k+1, F(p/q)} = m_{k+1, F(p_1/q_1) } + m_{k+1, F(p_2/q_2)} = m_{k, p_1/q_1}.$$
            
            \item The other case occurs when $m_{k, p/q} = m_{k, p_1/q_1}$ and $m_{k, p_2/q_2} = 1$ (analogously if $m_{k, p/q} = m_{k, p_2/q_2}$ and $m_{k, p_1/q_1} = 1$). Hence, $m_{k+1, F(p_2/q_2)} = 1$ and $m_{k+1, F(p_1/q_1)} = m_{k, p_1/q_1}$, concluding that: $$m_{k+1, F(p/q)} = m_{k+1, F(p_1/q_1)} = m_{k, p_1/q_1} = m_{k, p/q}.$$
        \end{itemize}
    \end{itemize}
\end{itemize}
\qedsymbol

\noindent The lemma \ref{lemma:10} leads us to the first scaling equation. This self-similar behaviour can be observed in Fig. \ref{fig:Fig_xvsP(k)}
\begin{theorem}
For all $k \geq 5$, we have the following scaling equation:
$$
 P\left(k+1, F(x) = \frac{x}{x+1}\right) = \frac{1}{x+1} \cdot P(k,x) 
$$
\label{Tma10}
\end{theorem}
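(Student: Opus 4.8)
The plan is to reduce the scaling equation to a pure counting statement that has already been proved, namely Lemma \ref{lemma:10}, together with elementary bookkeeping of the number of nodes in the two Haros graphs involved. Throughout I write $m_{k,x}$ for the number of nodes of degree $k$ in $G_x$, exactly as in Lemma \ref{lemma:10}, and I first treat the rational case $x=p/q$ with $(p,q)=1$.

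Under the boundary node convention of Section III.A the graph $G_{p/q}$ has precisely $q$ nodes, so the degree distribution is just the normalised count
$$
P(k,p/q) = \frac{m_{k,p/q}}{q}.
$$
The only nontrivial arithmetic step is to identify the denominator on the image side. Since
$$
F(p/q) = \frac{p/q}{1 + p/q} = \frac{p}{p+q},
$$
and since $(p,q)=1$ forces $(p,p+q)=1$, the fraction $F(p/q)$ is already irreducible; hence $G_{F(p/q)}$ has exactly $p+q = q(1+x)$ nodes. This single identity $q(1+x)=p+q$ is the sole origin of the prefactor $\tfrac{1}{1+x}$.

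With both node counts in hand the argument is a direct substitution. Lemma \ref{lemma:10} supplies, for every $k\geq 5$, the equality $m_{k,p/q} = m_{k+1,F(p/q)}$, whence
$$
P\left(k+1,F(x)\right) = \frac{m_{k+1,F(p/q)}}{q(1+x)} = \frac{m_{k,p/q}}{q(1+x)} = \frac{1}{1+x}\cdot\frac{m_{k,p/q}}{q} = \frac{1}{1+x}\,P(k,x),
$$
which is the claim for rational $x$. To finish I would extend the identity to irrational $x$ by density: every irrational is the limit of its convergents, which are rationals reached along the Farey binary tree; one checks that $F$ maps irrationals to irrationals, and for fixed $k$ the function $P(k,\cdot)$ is continuous at every irrational point (its only discontinuities are the removable ones at the rationals of levels $\ell_{k-3}$ and $\ell_{k-2}$, established earlier). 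Taking the limit along convergents on both sides then lifts the equation to all $x$.

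The genuinely substantive content is the node-count identity $m_{k,x}=m_{k+1,F(x)}$, which is precisely Lemma \ref{lemma:10} and has already been proved by induction over the levels of the Farey binary tree; the present theorem is essentially its probabilistic reformulation. Consequently I do not expect a real obstacle here. The two points that require a little care are verifying that $F(p/q)=p/(p+q)$ is in lowest terms — so that the node count is exactly $p+q$ and not a proper divisor — and justifying the passage to the limit in the irrational case via the continuity results obtained above.
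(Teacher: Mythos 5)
Your proof is correct and follows essentially the same route as the paper's: both reduce the scaling equation to the node-count identity $m_{k,x}=m_{k+1,F(x)}$ of Lemma \ref{lemma:10} and then convert counts to probabilities via the node totals $q$ and $p+q$. Your additional checks (irreducibility of $p/(p+q)$ and the passage to irrationals by continuity along convergents) are sound refinements that the paper leaves implicit, but they do not change the argument.
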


\begin{proof}
Let us $x = p/q$ and $m_{k,p/q}$ be the number of nodes with degree $k$ in $G_{p/q}$. In virtue of Lemma \ref{lemma:10}, we have $m_{k,x} = m_{k+1, F(p/q)}$. Using that $F\left(\frac{p}{q}\right) = \frac{p}{p+q} $ then:

$$
P\left(k,\frac{p}{q}\right) = \frac{m_{k,p/q}}{q} = \frac{m_{k+1,F(p/q)}}{q} = \frac{p+q}{q}\cdot\frac{m_{k+1,F(p/q)}}{p+q} = \left(1 + \frac{p}{q}\right) \cdot P\left(k+1,F(p/q)\right). 
$$
\end{proof}

Theorem \ref{Tma10} shows that if we know the following degree values of $G_x$: $P(5,x), P(6,x), ... $, we know the degree values of $G_{F(x)}$: $P(6,F(x)), P(7,F(x)), ...$. Then, the only unknown value of $P(k, F(x))$ is $k = 5$. The following corollary shows this value to complete the degree distribution of $G_{F(x)}$:
 
\begin{corollary}

Given the degree distribution $P(k,x)$ of the Haros graph $G_x$, the degree distribution $P(k,F(x))$ of the Haros graph $G_{F(x)}$ is known. In particular, if $x > 1/2$, then $P(k = 5, F(x)) = \frac{2x-1}{x+1}$ and if $x < 1/2$, then $P(k = 5, F(x)) = 0$.
\label{cor:12}
\end{corollary}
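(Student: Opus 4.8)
The plan is to combine the scaling relation of Theorem~\ref{Tma10} with the explicit piecewise formula for $P(5,\cdot)$ recorded in Eq.~\ref{eqk5}. First I note that Theorem~\ref{Tma10} already determines $P(k,F(x))$ for every $k\geq 6$ directly from the data $P(\cdot,x)$, via $P(k,F(x))=\frac{1}{x+1}\,P(k-1,x)$. The low-degree values $P(2,F(x)),P(3,F(x)),P(4,F(x))$ are universal and known from Theorem~\ref{Tma:P234} together with the mirror symmetry $P(k,y)=P(k,1-y)$, since $F(x)$ is merely a real number in $[0,1]$ that one may substitute into Eq.~\ref{eq1}. Hence the only degree left undetermined is $k=5$: this is exactly the gap not covered by Theorem~\ref{Tma10}, since computing $P(5,F(x))$ through the scaling relation would require $P(4,x)$, which falls outside the range $k\geq 5$ for which the theorem is stated. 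Determining $P(5,F(x))$ therefore closes the argument and shows $P(\cdot,F(x))$ is fully known.

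To compute $P(5,F(x))$ I would first pin down the range of $F$. Since $F(x)=x/(1+x)$ has derivative $F'(x)=1/(1+x)^2>0$, the map is strictly increasing, and $F(0)=0$, $F(1/2)=1/3$, $F(1)=1/2$. Consequently $F$ carries $(0,1/2)$ bijectively onto $(0,1/3)$ and $(1/2,1)$ bijectively onto $(1/3,1/2)$. This single structural fact about $F$ is what the proof hinges on.

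Next I would feed this into Eq.~\ref{eqk5}. If $x<1/2$, then $F(x)\in(0,1/3)$, which lies in the \emph{otherwise} branch of Eq.~\ref{eqk5}, so $P(5,F(x))=0$. If $x>1/2$, then $F(x)\in(1/3,1/2)$, the branch in which $P(5,\cdot)=3\,(\cdot)-1$; substituting $F(x)=x/(1+x)$ gives
\[
P(5,F(x))=3\cdot\frac{x}{1+x}-1=\frac{3x-(1+x)}{1+x}=\frac{2x-1}{x+1},
\]
exactly as claimed. Together with the preceding paragraph, this establishes that $P(\cdot,F(x))$ is completely determined by $P(\cdot,x)$.

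There is no serious obstacle here: the statement is essentially a bookkeeping consequence once Theorem~\ref{Tma10} is in hand. The only place demanding genuine care is the range computation for $F$, because the dichotomy in the corollary ($x<1/2$ versus $x>1/2$) is precisely the preimage under $F$ of the threshold $y=1/3$ separating the vanishing and linear branches of Eq.~\ref{eqk5}. Getting the endpoint value $F(1/2)=1/3$ right is what makes the two cases align with the stated formula.
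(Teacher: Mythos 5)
Your proof is correct, but it takes a genuinely different route from the paper's. The paper never touches the explicit piecewise formula of Eq.~\ref{eqk5}; instead it runs a mass-balance argument: since $F(x)=x/(1+x)<1/2$ for all $x\in(0,1)$, Theorem~\ref{Tma:P234} gives $\sum_{k\geq 5}P(k,F(x))=F(x)=\frac{x}{x+1}$, while Theorem~\ref{Tma10} pins down the partial sum $\sum_{k\geq 6}P(k,F(x))=\frac{1}{x+1}\sum_{k\geq 5}P(k,x)$, which equals $\frac{x}{x+1}$ or $\frac{1-x}{x+1}$ according to whether $x<1/2$ or $x>1/2$; the value $P(5,F(x))$ is then read off as the residual. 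That derivation is self-contained, resting only on Theorem~\ref{Tma:P234}, the mirror symmetry, and the scaling theorem. Your argument instead substitutes $F(x)$ directly into Eq.~\ref{eqk5} after the (correct and essential) observation that $F$ carries $(0,1/2)$ onto $(0,1/3)$ and $(1/2,1)$ onto $(1/3,1/2)$; the algebra $3\cdot\frac{x}{1+x}-1=\frac{2x-1}{x+1}$ is right. What you give up is independence from Eq.~\ref{eqk5}, which the paper only asserts (``one can prove by induction'') rather than proves in full, and which the paper's own normalisation argument could in fact be used to \emph{derive}; what you gain is a shorter, more concrete computation that makes the $x\lessgtr 1/2$ dichotomy transparent as the preimage under $F$ of the threshold $1/3$. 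Both are valid; just be aware that your version inherits the burden of proving Eq.~\ref{eqk5}, whereas the paper's does not.
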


\textit{Proof of Corollary B.\ref{cor:12}. } \\
If $x < 1/2$, we know that: $$\sum_{k \geq 5} P(k,x) = 1 - P(2) - P(3) - P(4) = 1 - x - (1-2x) = x.$$ 
Moreover, as $F(x) < 1/2$, we have: 

$$\sum_{k \geq 5} P(k,F(x)) = \frac{x}{x+1}.$$ 

By Theorem \ref{Tma10}, we note the following:

\begin{eqnarray*}
\frac{x}{x+1} = \sum_{k \geq 5} P(k,F(x)) = P(5, F(x)) + \sum_{k \geq 6} P(k,F(x)) =  \\
= P(5, F(x)) + \frac{1}{x+1}\sum_{k \geq 5} P(k,x) = P(5, F(x)) + \frac{x}{x+1}.
\end{eqnarray*}

\noindent Hence, $P(5, F(x)) = 0$.\\

\noindent Now, if $x > 1/2$, then: 
$$\sum_{k \geq 5} P(k,x) = 1-x$$ 
while again we have: 
$$\sum_{k \geq 5} P(k,F(x)) = \frac{x}{x+1}.$$ 
Using Theorem \ref{Tma10}, we observe:

\begin{eqnarray*}
\frac{x}{x+1} =  \sum_{k \geq 5} P(k,F(x)) = P(5, F(x)) + \sum_{k \geq 6} P(k,F(x)) = \\
= P(5, F(x)) + \frac{1}{x+1}\sum_{k \geq 5} P(k,x) = P(5, F(x)) + \frac{1 - x}{x+1}.
\end{eqnarray*}

Hence, $P(5, F(x)) = \frac{2x-1}{x+1}$.

\qedsymbol
\\

To illustrate the meaning of the corollary, we first show the degree distribution of the Haros graph $G_{2/5}$ and the Haros graph of $F(2/5) = 2/7$:

\begin{equation*}
    P(k, 2/5) =\begin{dcases}
        2/5  \; \; ,k = 2 \\
        1/5  \; \; ,k = 3 \\
        0  \; \; \; \; \; \; ,k = 4 \\ 
        1/5  \; \; ,k = 5,6 \\ 
        \end{dcases}
\, \, \, \,
    P(k, 2/7) = \begin{dcases}
        2/7  \; \; ,k = 2 \\
        3/7  \; \; ,k = 3 \\
        0  \; \; \; \; \; \; ,k = 4,5 \\ 
        1/7  \; \; ,k = 6,7 \\ 
    \end{dcases}
\end{equation*}

It can be observed that the degrees $k = 6,7$ of $G_{2/7}$ that appear are the same degrees as those of $G_{2/5}$ that displaced one position, i.e., $k = 5,6$. Moreover, the degree distribution follows the scaling equation given in Theorem \ref{Tma10}. However, if we now consider $x = 3/5 > 1/2$ and his image by $F(x)$, $3/8$, we have the degree distributions:

\begin{equation*}
    P(k, 3/5) =\begin{dcases}
        2/5  \; \; ,k = 2 \\
        1/5  \; \; ,k = 3 \\
        0  \; \; \; \; \; \; ,k = 4 \\ 
        1/5  \; \; ,k = 5,6 \\ 
        \end{dcases}
\, \, \, \,
    P(k, 3/8) = \begin{dcases}
        3/8  \; \; ,k = 2 \\
        2/8  \; \; ,k = 3 \\
        0  \; \; \; \; \; \; ,k = 4 \\ 
        1/8  \; \; ,k = 5,6,7 \\ 
    \end{dcases}
\end{equation*}

In that case, the degrees $k= 5,6$ appearing in $G_{3/5}$ correspond to the degrees $k = 6,7$ in $G_{3/8}$ through the scaling property. However, also the degree $k = 5$ is not generated via the scaling equation given in Theorem \ref{Tma10}.
\begin{theorem}
For $x \in [0,1]$, we have
\begin{equation*}
H(x)=\left\{
\begin{array}{ll}
(1+x)\cdot H(F(x)) & x \in [0,1/2] \\
(1+x)\cdot H(F(x)) - x\log x +  (2x-1)\log(2x-1)  +  (1-x)\log(1-x) & x \in [1/2, 1]  \\
\end{array}%
\right.
\end{equation*}

that is, the scaling of $H(x)$ is different depending on if $x < 1/2$ or $x > 1/2$.
\end{theorem}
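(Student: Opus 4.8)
The plan is to reduce the claim to a direct computation of the entropy $S(F(x))$ of the image graph $G_{F(x)}$, and then to repackage it as $H(F(x))$ through the definition \ref{def:EntH}. The crucial structural observation is that $F(x)=x/(1+x)$ maps the whole interval $[0,1]$ into $[0,1/2]$; hence, \emph{whatever} branch $x$ belongs to, the argument $F(x)$ always obeys the lower branch of the reduced entropy, so $H(F(x))=S(F(x))+2F(x)\log F(x)+(1-2F(x))\log(1-2F(x))$ with $F(x)=x/(1+x)$ and $1-2F(x)=(1-x)/(1+x)$. I would therefore first split the defining sum \ref{def:Def3} for $S(F(x))$ into the low-degree part $k\in\{2,3,4,5\}$ and the tail $k\geq 6$, and treat each separately.

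For the low-degree part I would use the explicit values at $F(x)\leq 1/2$: by Theorem \ref{Tma:P234}, $P(2,F(x))=x/(1+x)$, $P(3,F(x))=(1-x)/(1+x)$ and $P(4,F(x))=0$, while the single value $P(5,F(x))$ comes from Corollary \ref{cor:12} and is precisely what breaks the symmetry between the two branches: it vanishes for $x<1/2$ and equals $(2x-1)/(1+x)$ for $x>1/2$. For the tail I would apply the scaling relation of Theorem \ref{Tma10}, $P(k+1,F(x))=\tfrac{1}{1+x}P(k,x)$ for $k\geq 5$, to rewrite $\sum_{k\geq 6}P(k,F(x))\log P(k,F(x))$ as $\tfrac{1}{1+x}\sum_{k\geq 5}P(k,x)\bigl[\log P(k,x)-\log(1+x)\bigr]$. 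Here I would insert two facts obtained from normalisation and the known values of $P(2,x)$, $P(3,x)$ and $P(4,x)$: that $\sum_{k\geq 5}P(k,x)$ equals $x$ for $x<1/2$ and $1-x$ for $x>1/2$, and that $\sum_{k\geq 5}P(k,x)\log P(k,x)=-S(x)-P(2,x)\log P(2,x)-P(3,x)\log P(3,x)$, the $k=4$ contribution being null.

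Assembling the pieces gives $S(F(x))$ as a combination of $S(x)$, a $\log(1+x)$ term, and the boundary logarithms; adding the correction $2F(x)\log F(x)+(1-2F(x))\log(1-2F(x))$ produces $H(F(x))$, and substituting $S(x)$ in terms of $H(x)$ on the relevant branch of \ref{def:EntH} closes the argument. The step to verify explicitly is that all terms carrying $\log(1+x)$ cancel: for $x>1/2$ their coefficients add up to $\tfrac{1}{1+x}\bigl(-x+(2x-1)+(1-x)\bigr)=0$, and the simpler case $x<1/2$ (one fewer term, since $P(5,F(x))=0$) is analogous. After this cancellation one multiplies through by $(1+x)$ to obtain $H(x)=(1+x)H(F(x))$ for $x\leq 1/2$ and $H(x)=(1+x)H(F(x))-x\log x+(2x-1)\log(2x-1)+(1-x)\log(1-x)$ for $x\geq 1/2$, as claimed.

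The main obstacle is organisational rather than conceptual: it lies in the careful bookkeeping of the half-dozen logarithmic terms so that the spurious $\log(1+x)$ factors cancel and the surviving boundary terms recombine into exactly the stated expression. The genuine conceptual point to get right is that the entire discrepancy between the two branches is forced by the single probability $P(5,F(x))$ of Corollary \ref{cor:12}, which is the only summand in $S(F(x))$ that registers whether $x$ lies below or above $1/2$; once this term is handled, the remainder of the computation is uniform in both cases.
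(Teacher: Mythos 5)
Your proposal is correct and follows essentially the same route as the paper's Appendix C proof: both rest on the tail-scaling relation of Theorem \ref{Tma10}, the value of $P(5,F(x))$ from Corollary \ref{cor:12} as the sole source of the branch asymmetry, and the explicit low-degree probabilities, with the same $\log(1+x)$ cancellations. The only difference is organisational — you compute $H(F(x))$ in terms of $H(x)$ and invert, whereas the paper manipulates $H(x)$ directly into $(1+x)H(F(x))$ plus the correction terms — which does not change the substance of the argument.
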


\begin{proof}

Let us $x \in [0,1/2]$. By the definition of $H(x)$ given in \ref{def:EntH}, it is clear that $H(x) = -\sum_{k\geq 4} P(k,x)\log(P(k,x)) + x\cdot \log x$ (remember that $P(4,x)=0$). Therefore, using Theorem \ref{Tma10} :

\begin{eqnarray*}
H(x) &  = & -\sum_{k \geq 4} P(k,x)\log(P(k,x)) + x\cdot \log x \\
& = & -\sum_{k \geq 4} (1+x)P(k+1,F(x))\log((1+x)P(k+1,F(x))) + x\cdot \log x  \\
& = & (1+x)\left(-\sum_{k \geq 5}P(k,F(x))\log(1+x) -\sum_{k \geq 5}P(k,F(x))\log(P(k,F(x))) + \frac{x}{1+x}\cdot \log x \right)\\
& = & (1+x)\left(-\frac{x}{1 + x}\log(1+x) -\sum_{k \geq 5}P(k,F(x))\log(P(k,F(x))) + \frac{x}{1+x}\cdot \log x \right) \\
& = & (1+x)\left(-\sum_{k\geq 5}P(k,F(x))\log(P(k,F(x)))  + \frac{x}{1 + x} \cdot \log \frac{x}{1 + x} \right)\\
& = & (1+x)\left(-\sum_{k \geq 5}P(k,F(x))\log(P(k,F(x)))  + F(x) \cdot \log F(x)  \right)\\
& =&  (1+x)\cdot H(F(x))
\end{eqnarray*}

Let us consider $x \in [1/2, 1]$. In that case, we have $H(x) = -\sum_{k\geq 4} P(k) \cdot  \log(P(k)) + (1-x)\cdot \log(1-x)$. Moreover, using the scaling of $P(k,x)$ provided in Theorem \ref{Tma10} and the degree distribution of $P(k,F(x))$ given in Corollary \ref{cor:12}, we have the following:

\begin{eqnarray*}
H(x) &  = & -\sum_{k \geq 4} P(k,x)\cdot \log(P(k,x)) + (1-x)\cdot \log (1-x). \\
&&\text{Adding and subtracting the term $P(k_0, F(x)) = \frac{2x-1}{x+1}$:} \\
& = & -\sum_{k \geq 5} (1+x)\cdot P(k,F(x))\cdot \log((1+x)\cdot P(k,F(x))) + (1+x)\cdot \frac{2x-1}{x+1}\cdot \log \left( (1+x)\cdot \frac{2x-1}{x+1} \right)+ \\
 & + &(1-x)\cdot \log (1-x) \\
& = & (1+x)\cdot \left[-\sum_{k \geq 5}P(k,F(x))\cdot\log(1+x)  -\sum_{k \geq 5}P(k,F(x))\cdot \log(P(k,F(x))) + \frac{2x-1}{x+1}\cdot \log(2x-1) + \right. \\
& + & \left. \frac{1-x}{1+x}\cdot \log(1-x) \right]\\
& =  &(1+x)\cdot \left[-\frac{x}{1 + x}\cdot \log(1+x)  -\sum_{k \geq 5}P(k,F(x))\cdot  \log(P(k,F(x))) +
\frac{2x-1}{x+1}\cdot  \log(2x-1) + \right. \\
& + & \left. \frac{1-x}{1+x}\cdot \log (1-x) \right] \\
& = & (1+x)\cdot  \left[-\sum_{k \geq 5}P(k,F(x))\cdot  \log(P(k,F(x)))  + \frac{x}{1 + x} \cdot \log\left( \frac{x}{1 + x} \right) - \frac{x}{1+x}\cdot \log(x)+ \right.  \\
& + & \left. \frac{2x-1}{x+1}\cdot  \log(2x-1)+ \frac{1-x}{1+x}\cdot \log (1-x) \right] \\
&= & (1+x)\cdot H(F(x)) - x\cdot  \log (x) + (2x-1)\cdot  \log(2x-1) +  (1-x)\cdot  \log(1-x)
\end{eqnarray*}
\end{proof}
\newpage
\begin{theorem}
For $x \in [1/2, 1]$, we have the following functional scaling equation:
$$
S(x) = (x+2)\cdot H\left( \frac{1}{x+2} \right)
$$
\label{Tma12}
\end{theorem}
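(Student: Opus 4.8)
The plan is to prove the identity by first establishing an explicit dictionary between the degree distribution of $G_x$ for $x\in(1/2,1)$ and that of $G_y$, where $y=1/(x+2)\in(1/3,2/5)$, and then feeding this dictionary into the definitions of $S$ and $H$. The two endpoints $x=1/2$ and $x=1$ would be recovered at the end by invoking the continuity of $S$ established in the previous subsection.

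The central step is to produce this dictionary. Since $y\in(1/3,2/5)$ lies below $1/2$, I would write $y=F\!\left(1/(x+1)\right)$ with $F(w)=w/(1+w)$, together with the elementary identity $1/(x+1)=1-F(x)$. Applying Theorem \ref{Tma10} and its base case Corollary \ref{cor:12} at the outer point $w=1/(x+1)>1/2$ expresses $P(\cdot,y)$ through $P(\cdot,1/(x+1))$; the mirror symmetry $P(k,\cdot)=P(k,1-\cdot)$ and $1/(x+1)=1-F(x)$ then replace these by $P(\cdot,F(x))$; and a second use of Theorem \ref{Tma10} and Corollary \ref{cor:12} at the inner point $x>1/2$ closes everything in terms of $G_x$. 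I expect the outcome to be
\begin{equation*}
P(k,y)=\begin{cases}
\frac{1}{x+2}, & k=2,\\
\frac{x}{x+2}, & k=3,\\
0, & k=4,\\
\frac{1-x}{x+2}, & k=5,\\
\frac{2x-1}{x+2}, & k=6,\\
\frac{1}{x+2}\,P(k-2,x), & k\ge 7,
\end{cases}
\end{equation*}
where the two ``new'' low degrees $k=5,6$ are precisely the contributions produced by the $w>1/2$ branch of Corollary \ref{cor:12} at each of the two stages, while the tail for $k\ge 7$ is a rigidly shifted and rescaled copy of the whole degree distribution of $G_x$.

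With this dictionary in hand, I would compute $S(y)=-\sum_k P(k,y)\log P(k,y)$ directly, splitting the sum into the explicit low-degree entries and the shifted tail $k\ge 7$. Each term contributes a piece proportional to $\log(x+2)$; collecting these and using the normalization $\sum_{k\ge5}P(k,x)=1-x$ (valid because $P(2,x)=1-x$ and $P(3,x)=2x-1$ for $x>1/2$), the total coefficient of $\log(x+2)$ should come out to exactly $1$. The remaining terms reorganize, after substituting $\sum_{k\ge5}P(k,x)\log P(k,x)=-S(x)-(1-x)\log(1-x)-(2x-1)\log(2x-1)$, into the compact relation
\begin{equation*}
S(y)=\log(x+2)-\frac{x}{x+2}\,\log x+\frac{S(x)}{x+2}.
\end{equation*}
Finally, inserting $y=1/(x+2)$ into the definition \ref{def:EntH} of $H$ on $[0,1/2]$ gives $(x+2)H(y)=(x+2)S(y)-(x+2)\log(x+2)+x\log x$, and substituting the line above makes every logarithmic term cancel, leaving $(x+2)H\!\left(1/(x+2)\right)=S(x)$, as claimed.

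The main obstacle is the first step: verifying the degree dictionary, and in particular pinning down the $k=5$ and $k=6$ entries. These are exactly the positions where a genuinely new connectivity is created at a merging node during a change of symbol in the descent, so one must track the two successive applications of the scaling and boundary machinery (Theorem \ref{Tma10} and Corollary \ref{cor:12}) through the reflection $1/(x+1)=1-F(x)$ and confirm that no further low degrees are spuriously generated and that the tails align with the required shift by $2$. Once the dictionary is secured, the entropy computation is a bookkeeping exercise in which all contributions other than $S(x)$ telescope away.
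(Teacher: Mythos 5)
Your argument is correct and follows essentially the same route as the paper's: the identical composition $1/(x+2)=F\bigl(1-F(x)\bigr)$ with the mirror symmetry sandwiched between two applications of Theorem \ref{Tma10} and Corollary \ref{cor:12}, the only difference being that the paper packages the two scaling steps through its already-proved functional equation for $H$, whereas you inline them at the level of the degree distribution and only form the entropies at the end. I verified your dictionary (in particular the entries $P(5,y)=(1-x)/(x+2)$, $P(6,y)=(2x-1)/(x+2)$ and the shift-by-two tail $P(k,y)=\frac{1}{x+2}P(k-2,x)$ for $k\ge 7$) as well as the subsequent bookkeeping yielding $S(y)=\log(x+2)-\frac{x}{x+2}\log x+\frac{S(x)}{x+2}$; both are correct.
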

\begin{proof}
For $x > 1/2$, we have:
\begin{eqnarray*}
S(x) &  = & H(x) - 2\cdot (1-x) \cdot \log (1-x) - (2x - 1)\cdot \log(2x-1). \\
&&\textit{Applying the scaling equation of $H(x)$ when $x > 1/2$:} \\
& S(x) = & (1 + x)\cdot H\left( \frac{x}{x+1} \right) - x\cdot \log(x) + \underbrace{(2x-1)\cdot \log(2x-1)}_\text{I)} + (1-x) \cdot \log(1-x) - \\
& - & 2(1-x)\cdot \log(1-x) -  \underbrace{(2x-1)\cdot \log(2x-1)}_\text{I)} \\
& = & (1 + x)\cdot H\left( \frac{x}{x+1} \right) - x\cdot \log(x) - (1-x)\cdot \log(1-x) \\
&&\textit{By symmetry of $H$, we have $H(x) = H(1-x)$:} \\
& = & (1 + x)\cdot H\left(\frac{1}{x+1} \right) - x\cdot \log(x) - (1-x)\cdot \log(1-x) \\
&&\text{As $F(x) = \frac{x}{1+x} < \frac{1}{2}$, then $\frac{1}{x+1} = 1 - \frac{x}{x+1} > \frac{1}{2}$.} \\
&&\textit{Therefore, we apply again the scaling equation of $H(x)$ when $x > 1/2$:} \\
& = & (1 + x)\cdot \left[ \left( 1 + \frac{1}{x+1}\right)\cdot  H\left(\frac{1}{x+2} \right) - \frac{1}{x+1} \cdot \log \left( \frac{1}{x+1} \right) + \left( \frac{2}{x+1}-1\right) \cdot \log \left( \frac{2}{x+1}-1\right) \right.  \\
& + & \left. \left( 1 - \frac{1}{x+1}\right) \cdot \log \left( 1 - \frac{1}{x+1}\right) \right] - x\cdot \log(x) - (1-x)\cdot \log(1-x) \\
& = & (x + 2)\cdot H\left(\frac{1}{x+2} \right) + \log(x+1) + (1-x)\cdot \log \left( \frac{1-x}{1+x} \right) + x\cdot \log \left( \frac{x}{1+x} \right) - x\cdot \log(x) - (1-x)\cdot \log(1-x) \\
& = & (x + 2)\cdot H\left(\frac{1}{x+2} \right) + \underbrace{\log(x+1)}_\text{II)} + \underbrace{(1-2x) \cdot \log(1-x)}_\text{III)} - \underbrace{(1-x) \cdot \log(x+1)}_\text{II)} + \underbrace{x \cdot \log(x)}_\text{I)} -  \\
& - &\underbrace{x \cdot \log(x+1)}_\text{II)} -  \underbrace{x \cdot \log(x)}_\text{I)} -  \underbrace{(1-x) \cdot \log(1-x)}_\text{III)} \\
& = & (x + 2)\cdot H\left(\frac{1}{x+2} \right)
\end{eqnarray*}
\end{proof}

\subsection{Some aspects of  \texorpdfstring{$H(x)$}{Hx}}
\label{appendix:D}
The entropy function $H(x)$ evaluated in $x = p/q$ is:

\begin{eqnarray*}
H(p/q) = - \sum_{i \geq 5} \frac{n_i}{q}\cdot \log \left( \frac{n_i}{q} \right) + \frac{p}{q} \cdot \log \left( \frac{p}{q} \right) \\
= - \frac{1}{q} \sum_{i \geq 5} n_i \cdot \log (n_i) + \frac{\log (q)}{q} \cdot \sum_{i \geq 5} n_i  + \frac{p}{q} \cdot \log \left( \frac{p}{q} \right) 
\end{eqnarray*}

Using that 
$$\sum_{i \geq 5} n_i = q - p - q + 2p = p$$

we obtain the following

\begin{eqnarray*}
H(p/q) = - \frac{1}{q} \sum_{i \geq 5} n_i \cdot \log (n_i) + \frac{p}{q} \log (q) \ + \frac{p}{q} \cdot \log \left( \frac{p}{q} \right) = - \frac{1}{q} \sum_{i \geq 5} n_i \cdot \log (n_i) + \frac{p}{q} \cdot \log (p) 
\end{eqnarray*}

Then:

\begin{eqnarray*}
\frac{q}{p} \cdot H(p/q) = log (p)  - \frac{1}{p} \sum_{i \geq 5} n_i \cdot \log (n_i) 
\end{eqnarray*}

For example, fractions $\frac{2}{q} = \frac{2}{2n + 1} < \frac{1}{2}$ with $n \geq 2$ always have a node $k_1$ with frequency $n_1 = 1$ and the outer node $k_2$ with frequency $n_2 = 1$. This leads to the fact that $\frac{q}{2} \cdot H\left(\frac{2}{q}\right)= \log(2)$, as corroborated in Fig. \ref{fig:Familias_23/n}.


If we now take the fractions of the numerator $p = 3$, we get two distinct possibilities: $\left\lbrace \frac{3}{3n + 1} \right\rbrace_{n \geq 2}$ and $\left\lbrace \frac{3}{3n + 2} \right\rbrace_{n \geq 2}$. It can be checked, as Fig. \ref{fig:Familias_23/n} illustrates for the $3$ first elements,  how the rational numbers $\frac{3}{3n + 1}$ always have $n_1 = 1$ and $n_2 = 2$, while $\frac{3}{3n + 2}$ have $n_1 = n_2 = n_3 = 1$. We obtain that this rational family always follows:

\begin{align*}
\frac{3n + 1}{3} \cdot H\left( \frac{3}{3n + 1} \right) &  = \log( 3 ) - \frac{2}{3}\cdot \log (2) \\
\frac{3n + 2}{3} \cdot H\left( \frac{3}{3n + 2} \right)&  = \log(3)
\end{align*}

The explanation for this fact is as follows: the roots of the intervals $\mathcal{I}_n$ are the concatenation of $G_{\frac{1}{n+1}}$ and $G_{\frac{1}{n}}$. The connectivity distribution is made up of $2$ nodes with $k = 2$, one internal link of $k = n + 3$, the extreme node $k = n + 4$. The other nodes have connectivity $k = 3$. In summary, each graph reached by paths $L^n R \mathcal{P}$ will have the same nodes with connectivity $p = 2$ and identical frequencies $n_i$, even if their connectivity values are different.

\subsection{Proof of global maxima of \texorpdfstring{$S(x)$}{Sx}}
\label{appendix:E}
We shall now prove that $S$ defined over the set of Haros graphs is indeed maximal when the degree distribution coincides with the degree distribution of $G_{\phi^{-1}}$. 
For symmetry, we shall then consider the interval $(1/2,1]$ (we proceed analogously for $x'=1-\phi^{-1} \in[0,1/2]$ ). We will use the technique of Lagrange multipliers with two constraints for $P(k)$: normalisation of the degree distribution and arithmetic mean degree distribution (that is closely related to $q$ by construction). The Lagrangian functional has the form 
$${\cal L}[P(k)]=S(x) - (\text{normalization}) - (\text{arithmetic mean \ degree})$$
which will take an extremum at the solution $\delta {\cal L}[P^*(k)]=0$.

The normalisation constraints read
$${\cal P}=1-P(2)-P(3)-P(4)=1-(1-x)-(2x-1)-0 = 1-x$$
We know that if the number $x$ is irrational, $G_{x}$ will necessarily have an arithmetic mean degree $\overline{k} =4$. In such a case, the second constraint is enclosed in the constant

$${\cal Q}=4 - 2P(2)-3P(3)-4P(4)= 5-4x$$ 

If $x$ is instead a rational number $x=p/q$, the arithmetic mean degree $\overline{k}=4(1-1/2q)$ and the restriction on the mean degree are slightly different.

For irrational numbers, the Lagrangian functional reads
\begin{equation*}
\mathcal{L}[P(k)]=-\sum_{k=5}^{\infty}{P(k)\log{P(k)}}-(\lambda_0-1)\left(\sum_{k=5}^{\infty}{P(k)}-{\cal P}\right)
-\lambda_1\left(\sum_{k=5}^{\infty}{kP(k)}-{\cal Q}\right)
\end{equation*}
for which the extreme condition is
\begin{equation*}
\frac{\partial \mathcal{L}}{\partial P(k)}=-\log{P(k)}-\lambda_0-\lambda_1k=0
\end{equation*}
has the solution
\begin{equation*}
P(k)=e^{-\lambda_0-\lambda_1k}
\end{equation*}
From this, and using the definition of $\cal P$ we get
\begin{equation*}
\mathcal{P} = \sum_{k \geq 5}P(k)=\sum_{k=5}^{\infty}{e^{-\lambda_{0}-\lambda_{1}k}}=e^{-\lambda_0}\sum_{k=5}^{\infty}{e^{-\lambda_{1}k}}=1-x
\end{equation*}
As the infinite sum satisfies
\begin{equation}
\sum_{k=5}^{\infty}e^{-\lambda_{1}k}={\frac{e^{-4\lambda_{1}}}{e^{\lambda_{1}}-1}}
\tag{1} \label{suma}
\end{equation}
we have the following relationship between the Lagrange multipliers
\begin{equation}
e^{-\lambda_{0}}=\frac{(1-x)(e^{\lambda_{1}}-1)}{e^{-4\lambda_{1}}}
\tag{2} \label{elambda}
\end{equation}
For the reduced mean connectivity $\mathcal{Q}$ we get:
\begin{equation}
\mathcal{Q} =\sum_{k\geq 5}k P(k)=\sum_{k=5}^{\infty}k{e^{-\lambda_0-\lambda_1k}}=e^{-\lambda_0}\sum_{k=5}^{\infty}{k e^{\lambda_{1}k}}=5-4x
\tag{3} \label{Q}
\end{equation}
To calculate the sum, we can differenciate Eq. \ref{suma} with respect to $\lambda_1$, to obtain:
\begin{equation*}
\sum_{k=5}^{\infty}{ke^{-\lambda_{1}k}}=e^{-4\lambda_1} \frac{-4+5e^{\lambda_1}}{(e^{\lambda_{1}}-1)^2}
\end{equation*}
Substituting this sum into Eq. \ref{Q} and using Eq. \ref{elambda} we have
\begin{equation*}
5-4x =\left(\frac{(1-x)(e^{\lambda_{1}}-1)}{e^{-4\lambda_{1}}}\right)\left(e^{-4\lambda_1} \frac{-4+5e^{\lambda_1}}{(e^{\lambda_{1}}-1)^2}\right) = \frac{(1-x)(-4+5e^{\lambda_{1}})}{e^{\lambda_{1}}-1}
\end{equation*}
which, after some algebra, yields for the second Langrange multiplier
\begin{equation*}
x = e^{-\lambda_{1}}
\end{equation*}

\noindent Introducing the above result in Eq. \ref{elambda} we obtain for the first Lagrange multiplier
\begin{equation*}
e^{-\lambda_0} =\frac{(1-x)(x^{-1}-1)}{x^{4}} = (1-x)^{2}x^{-5}
\end{equation*}
Therefore, the degree distribution maximising the graph entropy is given by
\begin{equation*}
P(k,x)=\left\{
\begin{array}{ll}
1-x & k=2 \\
2x-1 & k=3 \\
0 			& k=4\\
(1-x)^{2}x^{k-5} & k\geq 5%
\end{array}%
\right.
\end{equation*}
But this is the distribution of $G_{\phi^{-1}}$, since
\begin{align*}
(1-\phi^{-1})^{2}(\phi^{-1})^{k-5} & = (\phi^{-1})^{k-5} \left(\frac{\phi-1}{\phi}\right)^2 \\
                                   & =  (\phi^{-1})^{k-5}(\phi^{-1})^{2}(1-\phi^{-1}) \\
 & = (\phi^{-1})^{k-5}(\phi^{-1})^{2}(\phi^{-1})^{2} \\ 
 & = (\phi^{-1})^{k-5+4} \\
 & = \phi^{1-k}
\end{align*}
Therefore, we conclude the proof.\qedsymbol

\subsection{Proof of local maxima}
\label{appendix:F}
We now prove that $S$ defined over the set of Haros graphs with $x \leq 1/n$ is maximal when the degree distribution coincides with the degree distribution of the graphs associated with noble numbers ${\cal C}_1(n)=\frac{1}{n+\phi^{-1}}$. As before, we establish two constraints for $P(k)$: normalisation of the degree distribution and the arithmetic mean degree distribution. An immediate consequence of Theorem \ref{Tma:Rep} tells us that if $x \leq 1/n$, then $P(x,k) = 0$ for $4 \leq k \leq n+2$. The normalisation constraint reads

\begin{equation*}
{\cal P} = 1 - P(2) - P(3) - \sum_{k \geq 4}^{n+2}P(k) = 1 - x - (1 - 2x) - 0 = x
\end{equation*}

\noindent and so, the arithmetic mean degree constraint now is

\begin{equation*}
{\cal Q} = 4 - 2P(2) - 3P(3) - \sum_{k \geq 4}^{n+2}kP(k) = 4 - 2x - 3(1 - 2x) - 0 = 1 + 4x
\end{equation*}
Then, for irrational numbers, the Lagrangian functional

\begin{equation*}
\mathcal{L}[P(k)]=-\sum_{k=n+3}^{\infty}{P(k)\log{P(k)}}-(\lambda_0-1)\left(\sum_{k=n+3}^{\infty}{P(k)}-{\cal P}\right)
-\lambda_1\left(\sum_{k=n+3}^{\infty}{kP(k)}-{\cal Q}\right)
\end{equation*}
for which the extremum condition reads

\begin{equation*}
\frac{\partial \mathcal{L}}{\partial P(k)}=-\log{P(k)}-\lambda_0-\lambda_1k=0
\end{equation*}
has the solution
\begin{equation*}
P(k)=e^{-\lambda_0-\lambda_1k}
\end{equation*}
For constraint $\cal P$ we have

\begin{equation*}
{\cal P} = \sum_{k = n + 3}^{\infty} P(k) = \sum_{k = n + 3}^{\infty} e^{-\lambda_0-\lambda_1k} =  e^{-\lambda_0} \sum_{k = n + 3}^{\infty} e^{-\lambda_1k} = x
\end{equation*}

\noindent As the infinite sum gives,

\begin{equation}
\label{lambda1}
\sum_{k = n + 3}^{\infty} e^{-\lambda_1k} = \frac{e^{-\lambda_1(n+2)}}{e^{\lambda_1}-1}
\end{equation}

\noindent we obtain

\begin{equation}
\label{lambda0}
e^{-\lambda_0} = x(e^{\lambda_1}-1)e^{\lambda_1(n+2)}
\end{equation}

\noindent The second constraint satisfies

\begin{equation}
\label{Q2}
{\cal Q} = \sum_{k = n + 3}^{\infty} kP(k) = \sum_{k = n + 3}^{\infty} k e^{-\lambda_0 - \lambda_{1}k} = e^{-\lambda_0} \sum_{k = n + 3}^{\infty} k e^{- \lambda_{1}k} = 1 + 4x
\end{equation}

Again, to calculate the infinite sum, we can differentiate equation \ref{lambda1} with respect to $\lambda_1$ and obtain the following result

\begin{equation*}
\sum_{k = n + 3}^{\infty} k e^{-\lambda_1k} = e^{-\lambda_1(n+2)} \cdot \frac{(n+3)e^{\lambda_1} - (n+2)}{(e^{\lambda_1}-1)^2}
\end{equation*}

And from equations \ref{Q2}  and \ref{lambda0}

\begin{equation*}
1 + 4x = e^{-\lambda_0} \sum_{n+3}^{\infty} k e^{- \lambda_{1}k} =
\end{equation*}

\begin{equation*}
\left( \frac{x(e^{\lambda_1}-1)}{e^{-\lambda_1(n+2)}} \right) \cdot \left(e^{-\lambda_1(n+2)}  \frac{(n+3)e^{\lambda_1} - (n+2)}{(e^{\lambda_1}-1)^2} \right) = \frac{x((n+3)e^{\lambda_1} - (n+2))}{e^{\lambda_1}-1}
\end{equation*}

Simplification of the above yields for the second Lagrange multiplier

\begin{equation*}
e^{\lambda_1} = \frac{1 - (n-2)x}{1 - (n-1)x}
\end{equation*}

Introducing the above equality in equation \ref{lambda0}, we get for the first Lagrange multiplier

\begin{equation*}
e^{-\lambda_0} = \frac{x^2}{1-(n-1)x} \left( \frac{1 - (n-2)x}{1 - (n-1)x} \right)^{n+2}
\end{equation*}

Therefore, the degree distribution maximising graph entropy is given by: 

\begin{equation}
P(k,x)=\left\{
\begin{array}{ll}
x  & k=2 \\
1 - 2x & k=3 \\
0 &  4 \leq k \leq n + 2 \\
\frac{x^2}{1-(n-1)x} \left( \frac{1 - (n-2)x}{1 - (n-1)x} \right)^{n+2-k} & k\geq n+3
\end{array}%
\right.  
\end{equation}%

This is the probability distribution of ${\cal C}_1(n)$ and using these three identities

\begin{align*}
\frac{{\cal C}_1(n)}{{\cal C}_1(n+1)} & = 1 + {\cal C}_1(n) \\
\frac{1 - (n-2){\cal C}_1(n)}{1 - (n-1){\cal C}_1(n)} &= \phi  \\
\frac{{\cal C}_1(n)^2}{1- (n-1){\cal C}_1(n)}  & = {\cal C}_1(n)\phi^{-1} \\ 
\end{align*}

the expression has simplified: $P(k) = {\cal C}_1(n)(\phi^{-1})^{k - (n+1)}$ \qedsymbol

\subsection{Slope of \texorpdfstring{$H(\mathcal{C}_1(n))$}{HC1} }
\label{appendix:G}

\begin{eqnarray*}
H(\mathcal{C}_1 (n)) & = &  - \sum_{k = n + 3}^{  \infty} \mathcal{C}_1 (n) ( \phi^{-1} )^{k - (n+1)} \cdot \log \left( \mathcal{C}_1 (n) ( \phi^{-1} )^{k - (n+1)} \right) + \mathcal{C}_1 (n)\cdot \log(\mathcal{C}_1 (n)) \\
& = & -\mathcal{C}_1 (n)\left[ \sum_{k = n + 3}^{  \infty} ( \phi^{-1} )^{k - (n+1)} \cdot \log\left( \mathcal{C}_1 (n)\right) + \sum_{k = n + 3}^{  \infty} ( \phi^{-1} )^{k - (n+1)} \cdot \left(k - (n+1)\right)\cdot \log \left( \phi^{-1}\right) \right] + \\
& + & \mathcal{C}_1 (n)\cdot \log(\mathcal{C}_1 (n)) \\
& = & -\mathcal{C}_1 (n)\left[ \log(\mathcal{C}_1 (n))\cdot \frac{(\phi^{-1})^2}{1 - \phi^{-1}} + \log(\phi^{-1})\cdot \frac{(2 - \phi^{-1})\cdot (\phi^{-1})^2 }{(\phi^{-1} - 1)^2} \right] + \mathcal{C}_1 (n)\cdot \log(\mathcal{C}_1 (n)) \\
& = & -\mathcal{C}_1 (n)\left[ \log(\mathcal{C}_1 (n)) + \log(\phi^{-1})\cdot (3 + \phi^{-1}) \right] + \mathcal{C}_1 (n)\cdot \log(\mathcal{C}_1 (n)) \\ 
& = & -\mathcal{C}_1 (n)\cdot \log(\phi^{-1})\cdot (3 + \phi^{-1})
\end{eqnarray*}

Hence:

$$
\frac{H(\mathcal{C}_1 (n))}{\mathcal{C}_1 (n)} = - \log(\phi^{-1})\cdot (3 + \phi^{-1})
$$

\subsection{Slope of \texorpdfstring{$H(\mathcal{C}_{3}(n,1,2))$}{HC3} }
\label{appendix:H}

Let us $x = [n, 1, 2, \overline{1}] = \frac{3 + \phi^{-1}}{(3n + 2) + (n+1)\cdot \phi^{-1}}$ for $n\geq 2$, where:

To shorten the equations, we denote $\mathcal Q = \frac{1}{(3n + 2) + (n+1)\cdot \phi^{-1}}$. We have the following:

\begin{eqnarray*}
H(\mathcal{C}_3(n,1,2)) & = & - \frac{1}{\mathcal Q}\cdot \log \left( \frac{1}{\mathcal Q}\right) - \frac{1 + \phi^{-1}}{\mathcal Q}\cdot \log \left( \frac{1 + \phi^{-1}}{\mathcal Q} \right) - \\
& - & \sum_{k = n + 6}^{\infty}\left[ \frac{1 - \phi^{-1}}{\mathcal Q}\cdot(\phi^{-1})^{k - (n+6)}\cdot\log\left( \frac{1 - \phi^{-1}}{\mathcal Q}\cdot(\phi^{-1})^{k - (n+6)} \right) \right] + \frac{3 + \phi^{-1}}{\mathcal Q}\cdot \log\left( \frac{3 + \phi^{-1}}{\mathcal Q} \right)  \\
& = & -\frac{1}{\mathcal Q} \left[ (1 + \phi^{-1})\cdot\log(1 + \phi^{-1}) -(2+\phi^{-1})\cdot \log(\mathcal Q)- \right. \\
& + & \left. (1 + \phi^{-1})\cdot \left(\log\left(\frac{1-\phi^{-1}}{\mathcal Q}\right)\cdot(2 + \phi^{-1}) +  \log(\phi^{-1})\cdot\left(1 + \frac{2}{\phi^{-1}}\right) \right) - \right. \\
& - & \left. (3+ \phi^{-1})\cdot\log\left( \frac{3 + \phi^{-1}}{\mathcal Q} \right) \right] \\
& = & \frac{-1}{\mathcal Q}\left[ (1+\phi^{-1})\cdot \log(1 + \phi^{-1}) - (2 + \phi^{-1})\cdot \log(\mathcal Q) + \log\left(\frac{1-\phi^{-1}}{\mathcal Q}\right) \right. \\
& + &  \left. \phi\cdot\log(\phi^{-1}) - (3+\phi^{-1})\cdot\log\left(\frac{3 + \phi^{-1}}{\mathcal Q} \right) \right] \\
& = & \frac{-1}{\mathcal Q}\left[ (1 + \phi^{-1})\cdot\log(1 + \phi^{-1}) + \log(1-\phi^{-1}) + \phi\log(\phi^{-1}) + (3+\phi^{-1})\cdot\log(3+ \phi^{-1}) \right. \\
& - & \left. \log(\mathcal Q)\cdot(-2-\phi^{-1}-1+3 + \phi^{-1}) \right] = \\
& = & \frac{-1}{\mathcal Q}\left[ (1+\phi^{-1})\cdot\log(1 + \phi^{-1}) + \log (1 - \phi^{-1}) + \phi\log(\phi^{-1}) + (3+\phi^{-1})\cdot \log(3 + \phi^{-1}) \right] =  \\
& = & \frac{-1}{\mathcal Q}\left[\log(1 -\phi^{-1}) -(3 + \phi^{-1})\cdot\log(3 + \phi^{-1}) \right]
\end{eqnarray*}
Thus:
$$
H(\mathcal{C}_3(n,1,2)) = \mathcal{C}_3(n,1,2) \cdot \left( \log(3 + \phi^{-1}) - \frac{1}{3+\phi^{-1}}\cdot \log(1-\phi^{-1}) \right)
$$
\newpage
\subsection*{Acknowledgments}
JC and BL acknowledge funding from Spanish Ministry of Science and Innovation under project M2505 (PID2020-113737GB-I00). LL acknowledges funding from projects DYNDEEP (EUR2021-122007), MISLAND (PID2020-114324GB-C22) and through the Severo Ochoa and María de Maeztu Program for Centers and Units of Excellence in R\&D (MDM-2017-0711), all of them funded by the Spanish Ministry of Science and Innovation via AEI.

\bibliographystyle{ieeetr}
\bibliography{On_a_graph_representation_of_real_numbers.bib}

\begin{thebibliography}{10}

\bibitem{Hardy}
G.~H. Hardy, E.~M. Wright, D.~R. Heath-Brown, and J.~H. Silverman, {\em {An
  Introduction to the Theory of Numbers}}.
\newblock Oxford University Press, 6th~ed., 2008.

\bibitem{Knuth}
R.~L. Graham, D.~E. Knuth, and O.~Patashnik, {\em Concrete Mathematics: A
  Foundation for Computer Science}.
\newblock USA: Addison-Wesley Longman Publishing Co., Inc., 2nd~ed., 1994.

\bibitem{Angell}
D.~Angell, {\em Irrationality and Transcendence in Number Theory}.
\newblock Chapman and Hall/CRC, 2022.

\bibitem{Niqui}
M.~Niqui, ``Exact arithmetic on the stern-brocot tree,'' {\em Journal of
  Discrete Algorithms}, vol.~5, no.~2, pp.~356--379, 2007.
\newblock 2004 Symposium on String Processing and Information Retrieval.

\bibitem{Vuillemin}
J.~Vuillemin, ``Exact real computer arithmetic with continued fractions,'' {\em
  IEEE Transactions on Computers}, vol.~39, no.~8, pp.~1087--1105, 1990.

\bibitem{Nivenetal}
I.~Niven, H.~Zuckerman, and H.~Montgomery, {\em An Introduction to the Theory
  of Numbers}.
\newblock Wiley, 5th~ed., 1991.

\bibitem{AdamczewskiII}
B.~Adamczewski and Y.~Bugeaud, ``On the complexity of algebraic numbers, ii.
  continued fractions,'' {\em Acta Mathematica}, vol.~195, no.~1, pp.~1--20,
  2005.

\bibitem{Khinchin}
A.~Y. Khinchin, {\em Continued Fractions}.
\newblock Dover Publications, translated from the third (1961) russian edition.
  reprint of the 1964 translation.~ed., 1997.

\bibitem{Bonnano}
C.~Bonnano and S.~Isola, ``Orderings of the rationals and dynamical systems,''
  {\em Colloquium Mathematicum}, vol.~116, pp.~165 -- 189, 2009.

\bibitem{Singerman}
W.~K. Jones G.~A, Singerman~D, ``The modular group and generalized farey
  graphs,'' {\em Groups St. Andrews 1989}, vol.~2, p.~316, 1991.

\bibitem{Vepstas}
L.~Vepstas, ``The minkowski question mark, psl (2, z) and the modular group,''
  2004.

\bibitem{Isola}
S.~Isola, ``Continued fractions and dynamics,'' {\em Applied Mathematics},
  vol.~2014, -04-15 2014.

\bibitem{FromTime}
L.~Lacasa, B.~Luque, F.~Ballesteros, J.~Luque, and J.~C. Nu\~no, ``From time
  series to complex networks: The visibility graph,'' {\em Proceedings of the
  National Academy of Sciences}, vol.~105, no.~13, pp.~4972--4975, 2008.

\bibitem{HVG}
B.~Luque, L.~Lacasa, F.~Ballesteros, and J.~Luque, ``Horizontal visibility
  graphs: Exact results for random time series,'' {\em Physical Review E},
  vol.~80, no.~4, p.~046103, 2009.

\bibitem{Irreversibility}
L.~Lacasa, A.~Nunez, {\'E}.~Rold{\'a}n, J.~M. Parrondo, and B.~Luque, ``Time
  series irreversibility: a visibility graph approach,'' {\em The European
  Physical Journal B}, vol.~85, no.~6, pp.~1--11, 2012.

\bibitem{Detectingperiodicity}
A.~Nunez, L.~Lacasa, E.~Valero, J.~P. G{\'o}mez, and B.~Luque, ``Detecting
  series periodicity with horizontal visibility graphs,'' {\em International
  Journal of Bifurcation and Chaos}, vol.~22, no.~07, p.~1250160, 2012.

\bibitem{Barabasi}
R.~Albert and A.-L. Barab\'asi, ``Statistical mechanics of complex networks,''
  {\em Rev. Mod. Phys.}, vol.~74, pp.~47--97, Jan 2002.

\bibitem{FeigenbaumGraphs}
B.~Luque, L.~Lacasa, F.~J. Ballesteros, and A.~Robledo, ``Feigenbaum graphs: A
  complex network perspective of chaos,'' {\em PLOS ONE}, vol.~6, pp.~1--8, 09
  2011.

\bibitem{intermittency}
A.~M. N{\'u}nez, B.~Luque, L.~Lacasa, J.~P. G{\'o}mez, and A.~Robledo,
  ``Horizontal visibility graphs generated by type-i intermittency,'' {\em
  Physical Review E}, vol.~87, no.~5, p.~052801, 2013.

\bibitem{QuasiperiodicGraphs}
B.~Luque, F.~J. Ballesteros, A.~Nunez, and A.~Robledo, ``Quasiperiodic graphs:
  structural design, scaling and entropic properties,'' {\em Journal of
  Nonlinear Science}, vol.~23, no.~2, pp.~335--342, 2013.

\bibitem{Zhang}
Z.~Zhang, B.~Wu, and Y.~Lin, ``Counting spanning trees in a small-world farey
  graph,'' {\em Physica A: Statistical Mechanics and its Applications},
  vol.~391, no.~11, pp.~3342--3349, 2012.

\bibitem{Farey}
J.~Farey, ``On a curious property of vulgar fractions,'' {\em London and
  Edinburgh Philosophical Magazine and Journal of Science}, vol.~47,
  pp.~385--386, 1816.

\bibitem{Motif}
S.~B. Guthery, {\em {A Motif of Mathematics: History and Application of the
  Mediant and the Farey Sequence}}.
\newblock Docent Press, 6th~ed., 2010.

\bibitem{beiler1964recreations}
A.~H. Beiler, {\em Recreations in the theory of numbers: The queen of
  mathematics entertains}.
\newblock Courier Corporation, 1964.

\bibitem{flanagan2019spectral}
R.~Flanagan, L.~Lacasa, and V.~Nicosia, ``On the spectral properties of
  feigenbaum graphs,'' {\em Journal of Physics A: Mathematical and
  Theoretical}, vol.~53, no.~2, p.~025702, 2019.

\bibitem{canonical}
B.~Luque and L.~Lacasa, ``Canonical horizontal visibility graphs are uniquely
  determined by their degree sequence,'' {\em The European Physical Journal
  Special Topics}, vol.~226, no.~3, pp.~383--389, 2017.

\bibitem{LacasaVisibility}
L.~Lacasa and W.~Just, ``Visibility graphs and symbolic dynamics,'' {\em
  Physica D: Nonlinear Phenomena}, vol.~374, pp.~35--44, 2018.

\bibitem{IsolaMaps}
S.~Isola, ``On the spectrum of farey and gauss maps,'' {\em Nonlinearity},
  vol.~15, no.~5, p.~1521, 2002.

\bibitem{FareyTree}
L.~Kocic, L.~Stefanovska, and S.~Gegovska-Zajkova, ``Iterative operators for
  farey tree,'' {\em Kragujevac Journal of Mathematics}, vol.~30, pp.~253--262,
  2007.

\bibitem{Takagi}
J.~C. Lagarias, ``The takagi function and its properties,'' 2012.

\bibitem{AnalyticalFeigenbaum}
B.~Luque, L.~Lacasa, F.~J. Ballesteros, and A.~Robledo, ``Analytical properties
  of horizontal visibility graphs in the feigenbaum scenario,'' {\em Chaos: An
  Interdisciplinary Journal of Nonlinear Science}, vol.~22, p.~013109, March 1,
  2012.

\bibitem{Trifonov}
V.~Trifonov, L.~Pasqualucci, R.~Dalla-Favera, and R.~Rabadan, ``Fractal-like
  distributions over the rational numbers in high-throughput biological and
  clinical data,'' {\em Scientific Reports}, vol.~1, no.~1, p.~191, 2011.

\bibitem{entropy}
B.~Luque, F.~J. Ballesteros, A.~Robledo, and L.~Lacasa, {\em Entropy and
  Renormalization in Chaotic Visibility Graphs}, ch.~1, pp.~1--39.
\newblock John Wiley \& Sons, Ltd, 2016.

\bibitem{IGTorre}
I.~G. Torre, R.~J. Heck, and A.~Tarquis, ``Multifrac: An imagej plugin for
  multiscale characterization of 2d and 3d stack images,'' {\em SoftwareX},
  vol.~12, p.~100574, 2020.

\end{thebibliography}
\end{document}